\newcommand \esperluette {&}
\newlength{\textlarg}
\title{Cayley trees and increasing 1,2-trees: let's twist!}
\DeclareMathOperator*{\DisjointUnion}{\text{\scalebox{1.3}{$\uplus$}}}
\DeclareMathOperator*{\Cayley}{Cayley}
\newcommand{\pdiff}[2]{ \frac{\partial #1}{\partial #2} }
\newcommand{\fig}[3]{
\begin{figure}[h!]
\begin{center}
 \includegraphics #1
 \end{center}
\vspace{-7pt}
\caption{ #2}
\label{#3}
\end{figure}
}
\author{Julien Courtiel\authornote{1}, Matthieu Dien\authornote{1}, Paul Dorbec\authornote{1}}
\newcommand{\vertex}[1]{\mathbf{#1}}
\newcommand{\lektree}{$(\le k)$-tree}
\newcommand{\lektrees}{\text{$(\le k)$-trees}}
\newcommand{\eddy}[1]{ \textsc{Eddy}(#1) }
\begin{document}

\maketitle

\begin{abstract}
An increasing 1,2-tree is a labeled graph formed by starting with a vertex and then repeatedly attaching a leaf to a vertex or a triangle to an edge, the labeling of the vertices corresponding to the order in which the vertices are added.
Equivalently, increasing 1,2-trees are connected chordal graphs of treewidth at most 2 labeled with a reversed perfect elimination ordering.

We prove that this family is equinumerous with Cayley trees, which are unconstrained labeled trees.
In particular, the number of triangles in an increasing 1,2-tree corresponds to the number of twists. A twist (also called improper edge) is an edge whose endpoint closer to vertex $\mathbf 1$  has a greater label than some vertex in the subtree rooted at the other endpoint of the edge.

We provide three proofs of this result, the first being based on similar recursive decompositions, the second on the resolution of generating functions, and the third describing a bijection. Finally, we propose efficient random generators for these two combinatorial families.

\end{abstract}

\textbf{keywords:} chordal graphs, bounded treewidth, Cayley trees, generating function, bijection, random generation.

\section{Introduction}

\subsection{Motivation}

Chordal graphs, also known as triangulated graphs, are a well-studied class of graphs appearing in a variety of domains (for example in numerical linear algebra \cite{linear_system}, in Bayesian networks \cite{bayesian, bayesian2}, in phylogenetics \cite{phylogeny}), each benefiting from their unique properties and efficient algorithms for processing.
Both these appearance domains and these efficient algorithms generally rely on the fact that chordal graphs admit a special vertex ordering called a \emph{perfect elimination ordering}.

Because of their ubiquity, it seems crucial to design random generators for chordal graphs. 
This is borne out by the abundant literature on the subject 
since Wormald's seminal work~\cite{wormald}.  
Numerous algorithmic approaches have been developed for generating random chordal graphs \cite{algo_bidon,markov-chain,turcs2,trois-algos,turcs}, 
but these algorithms lack formal guarantees regarding the distribution of their output. 
More recently, a polynomial-time uniform sampler for chordal graphs (tractable up to 30 vertices) was proposed in \cite{polynomial-sampling}.

The uniform distribution is not necessarily the most interesting one since chordal graphs of large size almost surely have the same shape: about half the vertices form a clique, the rest form a stable \cite{split}. 
This is why it seems worthwhile to restrict random generation to a subclass of chordal graphs. 
It makes sense to examine  chordal graphs with bounded treewidth,
as this plays an important role in many applicative contexts.

A well-studied subclass of chordal graphs with bounded treewidth
is the family of $k$-trees~\cite{origin-k-trees,number-k-trees,partial-k-tree},
which are often described in terms of perfect elimination ordering.
A perfect elimination ordering is an ordering of the vertices such that
for each vertex $v$, its neighbors occuring after $v$ in the order form a clique.
If those cliques are all of size $k$ (except for the last $k$ vertices),
then the chordal graph is called a $k$-tree.
Alternatively, $k$-trees are also defined as maximal graphs with treewidth $k$
(i.e. adding any edge to a $k$-tree increases its treewidth).
Equivalently, the graphs with treewidth at most $k$
are all the subgraphs of $k$-trees (called \emph{partial $k$-trees}).

In the following, we consider \emph{\lektrees},
which are graphs admitting a perfect elimination ordering
where the neighborhoods are cliques of size at most $k$
(instead of exactly $k$ for $k$-trees).
Equivalently, the \lektrees\ are the connected chordal partial $k$-trees,
or our natural family of connected chordal graphs with treewidth at most $k$.
It should be noted that the sets of $k$-trees do not cover all chordal graphs,
whereas \lektrees\ do.

Exact and asymptotic counting of \lektrees\
have been conducted in ~\cite{enumeration-chordal,limit,partial-k-tree}.
In our work, we consider labeled \lektrees\ where the labeling
induces a perfect elimination ordering.
This seems more meaningful in many contexts.
This leads to the notion of \emph{increasing} \lektrees, defined in the next subsection.

The present paper only focuses on the case $k=2$, for which we have discovered a surprising result: 
the number of increasing $(\le 2)$-trees of size $n$ is given by $n^{n-2}$, which is also the number of Cayley trees on $n$ vertices. 
Our article aims to explain why, in the expectation that some of the results will transfer to increasing \lektrees, for $k \geq 3$.

\subsection{Definitions and results}

In this paper, we consider simple undirected graphs whose vertices are labeled from $1$ to $n$, where $n$ is the number of vertices. 
We denote $\vertex x$ the vertex with label $x$.

Recall that chordal graphs are graphs admitting a perfect elimination ordering,
that is an ordering of the vertices $v_1,\ldots,v_n$ such that the neighborhood of $v_{i}$ in the  subgraph of $G$ induced by the vertex set $v_{i+1},\ldots, v_n$ is a complete subgraph.

\begin{definition}[Increasing \lektrees] \label{def:lektree}
Given a positive integer $k$, increasing \lektree s are labeled graphs inductively defined as follows:
\begin{itemize}
 \item the only 1-vertex increasing \lektree\ is an isolated vertex $\vertex 1$.
 \item given a $n$ vertices increasing \lektree\ $T$, consider a clique of size at most $k$ and attach vertex  $\vertex{n+1}$ adjacent to all vertices of the clique.
\end{itemize}
\end{definition}

Note that an increasing \lektree\ is a \lektree\ whose labeling
is a reverse perfect elimination ordering.

In the following, we mostly study $(\le 2)$-trees, that we name 1,2-trees. There are 16 increasing 1,2-trees of size $4$, as shown by Figure~\ref{16increasing}.
Observe that the top rows contain 1-trees (or just trees),
the bottom row contains 2-trees, and in between,
there are $(\le 2)$-trees which are not $k$-trees.

\fig{[width=0.75 \textwidth]{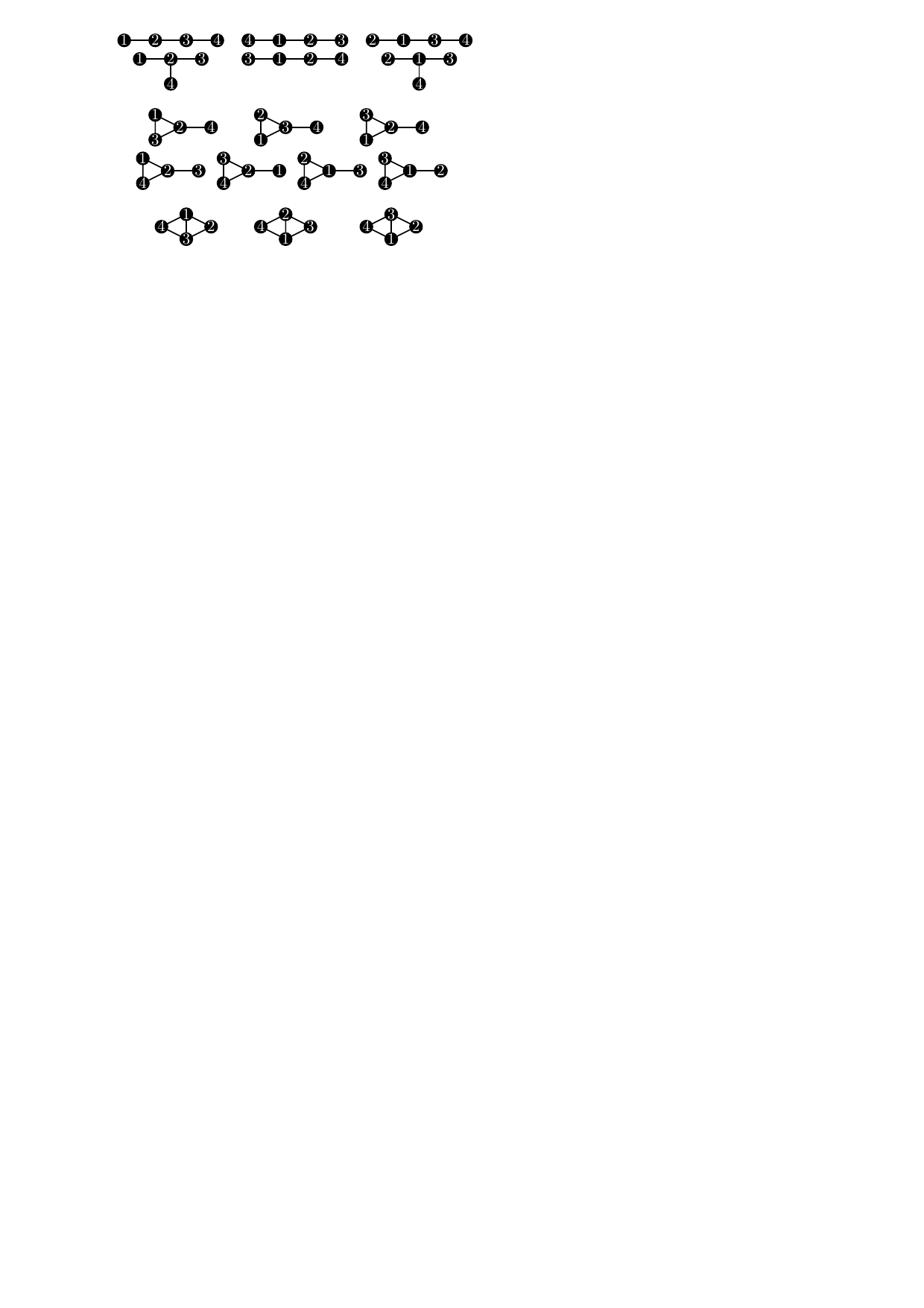}}{The 16 increasing 1,2-trees with $4$ vertices, gathered by their numbers of edges.}{16increasing}

The second family of combinatorial objects we consider consists of Cayley trees:

\begin{definition}[Cayley trees]
 Cayley trees are trees with a labeling of the vertices from $\vertex 1$ to $\vertex n$, where $n$ is the number of vertices in the tree. Those trees are naturally rooted in $\vertex 1$.
\end{definition}

Cayley trees are just trees.
The noun adjunct \textit{Cayley} is there to insist that the vertex labeling has no constraint.
This name originates from the mathematician Arthur Cayley who found that the number of trees with $n$ labeled vertices is $n^{n-2}$ \cite{cayley}.
Figure~\ref{16cayley} lists the 16 Cayley trees with $4$ vertices.

\fig{[width=0.7 \textwidth]{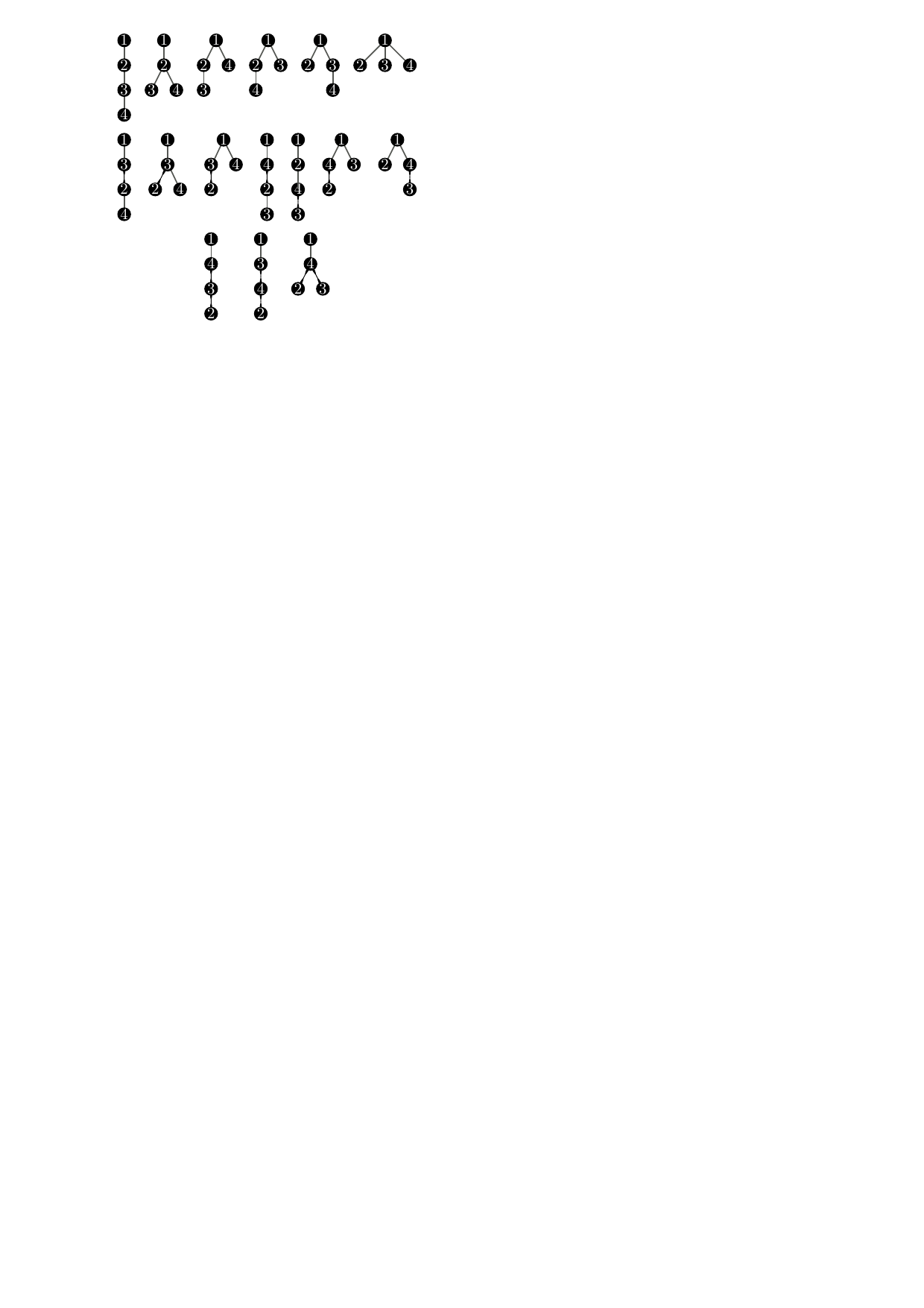}}{The 16 Cayley trees with $4$ vertices. The trees are sorted by the number of twists ($0$ twist in the first row, $1$ twist in the second row, $2$ twists in the bottom row). The twists are represented differently (as if the edges have really been twisted) than the increasing edges.  }{16cayley}

By rooting the Cayley trees in $\vertex 1$, we induce a notion of lineage (\emph{child}, \emph{parent}).
A \emph{descendant} of a vertex is either a child of the vertex or a child of some descendant of the vertex.
When we talk about \emph{subtrees}, we imply that they comprise a vertex and all its descendants.

\begin{definition}[increasing edge, twist]\label{def:twist}
Let $T$ be a Cayley tree rooted at $\vertex 1$,
 and $e$ be an edge of $T$ from a vertex $\vertex x$ to a subtree $S$.
 We say that $e$ is \emph{increasing} if all vertices in $S$ have a label greater than $x$.
 Otherwise, $e$ is called a \emph{twist}.
\end{definition}

Consider for example the linear tree $\vertex 1 - \vertex 3 - \vertex  6 - \vertex 2 - \vertex 5 - \vertex 4$.
 The edge between $\vertex 2$ and $\vertex 5$ is increasing: it connects $\vertex 2$ to the subtree $\vertex 5 - \vertex 4$.
 On the contrary, the edge between $\vertex 3$ and $\vertex 6$ is a twist: the subtree rooted at $\vertex 6$ contains $\vertex 2$ whose label is smaller than $3$.

Twists were called \emph{improper edges} in \cite{sho95}.

\begin{definition}[$\min(S)$]
Let $S$ be a subtree in a Cayley Tree. The \emph{minimum} of $S$, denoted by $\min(S)$, is the minimal label contained in $S$.
\label{def:min}
\end{definition}

Note that an edge is increasing if and only if it links a vertex $\vertex x$ to a subtree $S$ such that $x < \min(S)$.
Conversely, it is a twist if and only if $\min(S) < x$.

In this paper, we prove the following 
result:

\begin{theorem}
Increasing 1,2-trees with $n$ vertices and $m$ edges are in bijection with Cayley trees with $n$ vertices and $m-n+1$ twists.
\label{theo:central}
\end{theorem}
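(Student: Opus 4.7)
The plan is to use the paper's first approach: prove matching recursions on each side and conclude by induction.

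The 1,2-tree side is direct. Let $I_n(q) = \sum_T q^{\mathrm{triangles}(T)}$ over increasing 1,2-trees $T$ of size $n$. Adding vertex $\vertex{n+1}$ either to one of the $n$ existing vertices (no triangle) or to one of the $n-1+\mathrm{triangles}(T)$ existing edges (one new triangle) gives
\[
I_{n+1}(q) \;=\; [n + (n-1)q]\,I_n(q) \;+\; q^2 I'_n(q), \qquad I_1(q) = 1.
\]

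The crux is to prove the same recursion for the Cayley polynomial $C_n(q) = \sum_T q^{\mathrm{twists}(T)}$. I would use the following decomposition: every size-$(n+1)$ Cayley tree $T'$ is uniquely specified by a triple $(T, v, S)$, where $T$ is the size-$n$ Cayley tree obtained from $T'$ by deleting $\vertex{n+1}$ and reattaching its children to its former parent, $v$ is that former parent, and $S \subseteq \mathrm{children}_T(v)$ is the set of children of $\vertex{n+1}$ in $T'$. A direct analysis of the edges around $\vertex{n+1}$ shows that $t(T') - t(T) = |S| - r + [r \geq 1]$, where $r$ counts the $c \in S$ for which edge $(v, c)$ is a twist in $T$. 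This reduces the claim to the algebraic identity
\[
\sum_T q^{t(T)} \sum_{v, S} q^{|S| - r + [r \geq 1]} \;=\; [n + (n-1)q]\,C_n(q) \;+\; q^2 C'_n(q).
\]
Small examples (such as comparing the path $\vertex 1 - \vertex 3 - \vertex 2$ with the star on three vertices) show that the per-tree contributions disagree but cancel globally. Establishing this identity is the main obstacle; I would attempt either a sign-reversing involution on the triples $(T, v, S)$ responsible for the pointwise discrepancy, or a vertex-by-vertex expansion based on the factorisation $(1+q)^{s_v}(1 + q(2^{r_v} - 1))$ contributed by each vertex, where $s_v$ and $r_v$ count the non-twist and twist children of $v$ respectively.

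Once the recursions match, induction with $I_1 = C_1 = 1$ gives $I_n = C_n$ for all $n$. Extracting the $q^t$ coefficient yields the equinumerosity of increasing 1,2-trees with $t$ triangles and Cayley trees with $t$ twists, and the relation $m = n - 1 + t$ recasts this as the stated bijection between trees with $m$ edges and trees with $m - n + 1$ twists. As an alternative (matching the paper's second proof), one could instead translate the 1,2-tree recursion into the PDE $(1 - z(1+q))\,\partial_z F = q^2 \partial_q F - q F$ satisfied by the bivariate EGF and verify that both $I(z, q)$ and the Cayley EGF $T(z, q) = \int_0^z e^{R(u, q)}\,du$ are solutions with the same initial data.
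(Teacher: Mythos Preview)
Your overall strategy matches the paper's first proof: establish the same recursion on both sides and conclude by induction. The 1,2-tree recursion you wrote is correct (it is equivalent to the paper's Proposition~\ref{prop:rec} after the change of variables from edges to triangles). The bijection you set up on the Cayley side---delete $\vertex{n+1}$, reattach its children to its former parent~$v$, and record the subset $S$---is also correct, and your twist-change formula $|S|-r+[r\ge 1]$ is right.

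The genuine gap is exactly where you flag it: the identity
\[
\sum_{T}q^{t(T)}\sum_{v\in T}(1+q)^{s_v}\bigl[1+q(2^{r_v}-1)\bigr]
\;=\;[n+(n-1)q]\,C_n(q)+q^2C_n'(q)
\]
is the whole content of the Cayley-side recursion, and you have not proved it. The factor $2^{r_v}$ is the obstruction: your decomposition allows an \emph{arbitrary} subset of the children of $v$ to migrate to $\vertex{n+1}$, so the per-vertex generating function is genuinely nonlinear in~$r_v$, and there is no evident local reason why the global sum should collapse to something linear in the total twist count. Neither of your suggested fixes is fleshed out, and I do not see how a sign-reversing involution on the triples $(T,v,S)$ would be set up without essentially rediscovering the argument below.

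The paper closes this gap by using Shor's decomposition (Proposition~\ref{prop:rec_cayley}) instead of yours. The key difference is that Shor does \emph{not} contract $\vertex n$ into its parent; he contracts $\vertex n$ into a \emph{specific child}~$\vertex k$, namely the one whose subtree has the largest minimum among all children of $\vertex n$. In the reverse direction this means that, after choosing a non-root vertex $\vertex k$ with $b_k$ outgoing twists, one does not choose an arbitrary subset of its children but only an integer $a\in\{0,\dots,b_k\}$ (the subtrees with the $a$ smallest minima stay with $\vertex n$, the rest go to $\vertex k$). This forces the contribution of each vertex to be $b_k+1$ rather than $(1+q)^{s_k}[1+q(2^{b_k}-1)]$, so summing over $\vertex k\ne\vertex 1$ gives $(n-2)+\sum_k b_k = (n-2)+(\text{number of twists})$, which is exactly the linear coefficient you need. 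Replacing your decomposition by Shor's removes the obstacle entirely; the rest of your outline then goes through.

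Your closing PDE remark is too sketchy to assess (you do not say what $R$ is or why the Cayley EGF satisfies the same PDE), and in any case the paper's analytic proof proceeds differently: it solves the PDE explicitly via the method of characteristics and identifies the solution with the $\Cayley$ generating function.
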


The number of increasing 1,2-trees can thus be straightforwardly deduced from Cayley's formula \cite{cayley}.

\begin{corollary}
There are $n^{n-2}$ increasing 1,2-trees with $n$ vertices.
\label{cor:nn-2}
\end{corollary}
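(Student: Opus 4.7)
The plan is to deduce this immediately from Theorem \ref{theo:central}. For each fixed $n$, that theorem provides a bijection between increasing 1,2-trees with $n$ vertices and $m$ edges and Cayley trees with $n$ vertices and $m-n+1$ twists. The parameter $m$ ranges over $\{n-1, n, \ldots, 2n-3\}$ for increasing 1,2-trees (an increasing 1-tree has $n-1$ edges, while a 2-tree saturates at $2n-3$), which corresponds bijectively under $t = m - n + 1$ to the admissible twist counts $t \in \{0, 1, \ldots, n-2\}$ for Cayley trees.

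Taking the disjoint union of these bijections over all values of $m$ produces a single bijection between the set of all increasing 1,2-trees on $n$ vertices and the set of all Cayley trees on $n$ vertices, with the refined edge/twist statistic simply forgotten. Applying Cayley's formula \cite{cayley}, which gives $n^{n-2}$ for the cardinality of the latter set, yields the stated count. There is no genuine obstacle here: the corollary is pure bookkeeping, and all the mathematical content is already packaged inside Theorem \ref{theo:central}.
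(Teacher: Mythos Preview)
Your proposal is correct and matches the paper's own reasoning: the paper states the corollary follows ``straightforwardly'' from Theorem~\ref{theo:central} together with Cayley's formula, and your write-up simply spells out that deduction with the explicit ranges for $m$ and the twist count.
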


Note also that the number of triangles in a 1,2-tree is equal to $m-n+1$
since a $n$-vertices tree has $n - 1$ edges and in a 1,2-tree,
adding a vertex adjacent to an edge (a triangle) adds two edges instead of one.
This could also be inferred from Euler's formula,
as 1,2-trees are planar and their number of triangles
is equal to the number of inner faces,
though all faces are not triangles.

This paper provides three different proofs of Theorem~\ref{theo:central}.
In Section~\ref{s:recursive}, we show that
the recursive decomposition following
from the definition of increasing 1,2-trees is isomorphic
to Shor's decomposition~\cite{sho95} of Cayley trees.
In Section~\ref{s:gf}, we find the generating function of increasing 1,2-trees
by solving a Partial Differential Equation
and show that it can be expressed in terms of the generating function of Cayley trees.
In Section~\ref{s:bij}, we describe an explicit bijection, using as an intermediary object a labeled forest where each edge is a twist.
Finally, as an application, we present in Section~\ref{s:random_generation}
a linear uniform random generator for increasing 1,2-trees of size $n$
running in $O(n)$ time.

\section{Proof by recursive decomposition}
\label{s:recursive}

This section shows that increasing 1,2-trees and Cayley trees have similar recursive decompositions.
This gives a first inductive proof of Theorem~\ref{theo:central}.

\subsection{Recurrence for the number of increasing 1,2-trees}

The recurrence for increasing 1,2-trees follows from Definition~\ref{def:lektree}:

\begin{proposition}Let $c_{n,m}$ be the number of increasing 1,2-trees with $n$ vertices and $m$ edges. The numbers satisfy the recurrence
\[ c_{n,m} = (n-1) c_{n-1,m-1} + (m-2) c_{n-1,m-2},  \]
with initial condition $c_{1,0} = 1$ and $c_{1,k} = 0$ for $k \neq 0$.
\label{prop:rec}
\end{proposition}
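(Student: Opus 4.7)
The plan is to read the recurrence directly off Definition~\ref{def:lektree}. Every increasing 1,2-tree $T$ with $n \geq 2$ vertices is obtained from a unique increasing 1,2-tree $T'$ with $n-1$ vertices by attaching the vertex $\vertex n$ to a clique of size at most $2$. Uniqueness holds because $\vertex n$ has the largest label, so it is the last vertex of the reverse perfect elimination ordering, and $T'$ is recovered by simply removing $\vertex n$; the clique it was attached to is exactly its neighborhood in $T$.

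I would then split the count according to the size of that clique. If $\vertex n$ is attached to a clique of size $1$, i.e.\ a single vertex of $T'$, then exactly one edge is added, so $T'$ has $n-1$ vertices and $m-1$ edges, and there are $n-1$ possible vertices to pick. This accounts for $(n-1)\,c_{n-1,m-1}$ trees. If instead $\vertex n$ is attached to a clique of size $2$, i.e.\ an edge of $T'$ (creating a new triangle), then two edges are added, so $T'$ has $n-1$ vertices and $m-2$ edges, and the number of choices is the number of edges of $T'$, namely $m-2$. This accounts for $(m-2)\,c_{n-1,m-2}$ trees. Summing the two cases yields the announced recurrence.

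The initial condition is immediate from the first bullet of Definition~\ref{def:lektree}: the only increasing 1,2-tree on one vertex is the isolated $\vertex 1$, which has no edges, so $c_{1,0}=1$ and $c_{1,k}=0$ otherwise. There is no real obstacle here; the only point worth double-checking is the no-double-counting claim, which is secured by the fact that $\vertex n$ is necessarily a \emph{simplicial} vertex of $T$ (its neighborhood forms a clique) and by the uniqueness of the stripping operation $T \mapsto T'$.
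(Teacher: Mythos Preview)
Your proposal is correct and follows essentially the same argument as the paper's own proof: strip off the vertex $\vertex n$, split according to whether it was attached to a vertex or to an edge, and count the choices in each case. Your write-up is in fact slightly more careful about the uniqueness of $T'$ (invoking the simplicial property of $\vertex n$), whereas the paper simply notes that different choices yield different neighborhoods of $\vertex n$; both are fine.
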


\begin{proof}
There are two ways to construct an increasing 1,2-tree with $n$ vertices from an increasing 1,2-tree $T$ with $n-1$ vertices:
\begin{itemize}
\item choose any vertex from $T$ and attach $\vertex n$ to it,
\item choose any edge $e$ from $T$ and form a triangle between $\vertex n$ and the endpoints of $e$.
\end{itemize}
We cannot obtain two identical 1,2-trees with this process (since the neighbors of $\vertex n$ are always different),
 and it is possible to recover $T$ by removing $\vertex n$ and all its adjacent edges.
 The recurrence follows.
\end{proof}

\subsection{Recurrence for the number of Cayley trees}

The recursive decomposition for Cayley trees in terms of twists is already known.
Originally, Meir noticed in \cite{shor-meir} that for every number $z$, the recursion relation
\[S_z(x+1,y) = (x+z)S_z(x,y) + (x+y)S_z(x,y-1)\]
has a solution satisfying
\[\sum_{y=1}^x S_z(x,y) = (x+z)^x.\]
(This is a generalization of a formula found by Ramanujan \cite{ramanujan} -- even though Meir got inspired not by the Indian mathematician, but from a recreational mathematical problem created by Shor.)
With the notational change $c_{n,m} = S_{2}(n-2,m-n+1)$, we recover the recurrence for increasing 1,2-trees (see Proposition~\ref{prop:rec}). So from Meir's observation, we deduce that the number of increasing 1,2-trees satisfies
\[\sum_{m=n-1}^{2n-3} c_{n,m} = n^{n-2},\]
which is consistent with Corollary~\ref{cor:nn-2}.

A combinatorial interpretation of this result in terms of Cayley trees has been independently found by Shor \cite{sho95} on the one hand, and Dumont and Ramamonjisoa \cite{malgaches} on the other hand.
It translates as follows.

\begin{proposition}Let $c_{n,m}$ be the number of Cayley trees with $n$ vertices and $m-n+1$ twists. The numbers satisfy the recurrence
\[ c_{n,m} = (n-1) c_{n-1,m-1} + (m-2) c_{n-1,m-2},  \]
with initial condition $c_{1,0} = 1$ and $c_{1,k} = 0$ for $k \neq 0$.
\label{prop:rec_cayley}
\end{proposition}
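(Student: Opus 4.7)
The plan is to establish this recurrence bijectively by decomposing Cayley trees on $n$ vertices according to the role of the largest-labeled vertex $\vertex n$, in parallel with the proof of Proposition~\ref{prop:rec}. Concretely, I would partition Cayley trees with $k = m - n + 1$ twists into two classes and match each class to one of the two terms on the right-hand side.

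The first class consists of Cayley trees in which $\vertex n$ is a leaf. Its unique incident edge corresponds to the singleton subtree $\{\vertex n\}$, whose minimum is $n$, greater than the label of its parent; hence this edge is always increasing. Removing $\vertex n$ therefore yields a Cayley tree on $n-1$ vertices with the same $k$ twists (the other subtrees being unchanged), and conversely $\vertex n$ can be attached as a leaf to any of the $n-1$ vertices without creating a twist. This exactly accounts for the term $(n-1)\, c_{n-1,m-1}$.

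The second class consists of Cayley trees in which $\vertex n$ has degree at least $2$. The goal is to produce, from a Cayley tree $T'$ on $n-1$ vertices with $k-1$ twists, a list of Cayley trees on $n$ vertices with $k$ twists of length $m-2 = (n-2) + (k-1)$. A natural first operation is edge subdivision: replacing an edge $\vertex x - \vertex y$ of $T'$ (with $\vertex x$ closer to $\vertex 1$) by the path $\vertex x - \vertex n - \vertex y$. A short check shows that the new edge $\vertex n - \vertex y$ is always a twist (since $\min(S_{\vertex y}) \leq y < n$), while $\vertex x - \vertex n$ inherits the twist status of $\vertex x - \vertex y$; so subdivision adds exactly one twist. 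This produces $n-2$ distinct trees per $T'$, matching the first summand of the coefficient.

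The hard part is to account for the remaining $k-1$ options, which must be indexed by the twists of $T'$. Following Shor~\cite{sho95} and independently Dumont--Ramamonjisoa~\cite{malgaches}, I would define for each twist of $T'$ a second insertion rule that places $\vertex n$ in a slightly different position and produces one more distinct Cayley tree on $n$ vertices with $k$ twists. The main obstacle is then to verify that (i)~the two families of insertions produce pairwise distinct trees, and (ii)~every Cayley tree on $n$ vertices with $k$ twists in which $\vertex n$ has degree at least $2$ is obtained exactly once; this requires a careful local analysis around $\vertex n$ and its parent. Once this combinatorial bookkeeping is completed, the coefficient $(m-2)\, c_{n-1,m-2}$ is immediate and the recurrence follows.
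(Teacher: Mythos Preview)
Your plan coincides with the paper's, which reproduces Shor's argument. The leaf case is identical, and your edge subdivision is precisely the case $a=0$ of Shor's second construction (put $\vertex n$ in place of a non-root vertex $\vertex k$, attach $\vertex k$ as the sole child of $\vertex n$, and push \emph{all} former subtrees of $\vertex k$ down to $\vertex k$); your twist count for subdivision is correct.

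The genuine gap is that you never define the twist-indexed insertion: ``a slightly different position'' is not a construction, and citing \cite{sho95,malgaches} is not a proof. Here is what is missing. Order the subtrees hanging from a non-root vertex $\vertex k$ as $T_1,\dots,T_b,T'_1,\dots,T'_c$ by increasing minimum, with $\min(T_i)<k<\min(T'_j)$; the twists leaving $\vertex k$ are exactly the edges to $T_1,\dots,T_b$. For the twist to $T_a$ ($1\le a\le b$): replace $\vertex k$ by $\vertex n$, attach $\vertex k$ as a child of $\vertex n$, leave $T_1,\dots,T_a$ under $\vertex n$, and move $T_{a+1},\dots,T_b,T'_1,\dots,T'_c$ under $\vertex k$. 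Exactly one new twist is created (the edge $\vertex n\vertex k$) and all others keep their status. The inverse --- your anticipated ``careful local analysis'' --- is a single observation: among the subtrees hanging from $\vertex n$, the one rooted at $\vertex k$ is the one whose minimum is \emph{largest} (it equals $\min(T_{a+1})$ if $a<b$, or $k$ if $a=b$, in either case exceeding every $\min(T_i)$ for $i\le a$); contracting $\vertex n\vertex k$ then recovers $T'$ together with $a$. With this rule, the trees produced are exactly those where $\vertex n$ has at least two children, disjoint from the subdivision output (where $\vertex n$ has a unique child), and your count $(n-2)+(k-1)=m-2$ concludes.
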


\begin{figure}[h!]\begin{center}

\begin{tabular}{|c|c|}  \firsthline

 \begin{minipage}{0.15 \textwidth}
\begin{center}
The number of twists remains the same.
\end{center}
\end{minipage}

 &
 \begin{minipage}{0.75 \textwidth}
 \vspace{3pt}
\begin{center}
Choose a vertex $\vertex k$ and attach a leaf $\vertex n$ : \\
 \includegraphics[scale=1]{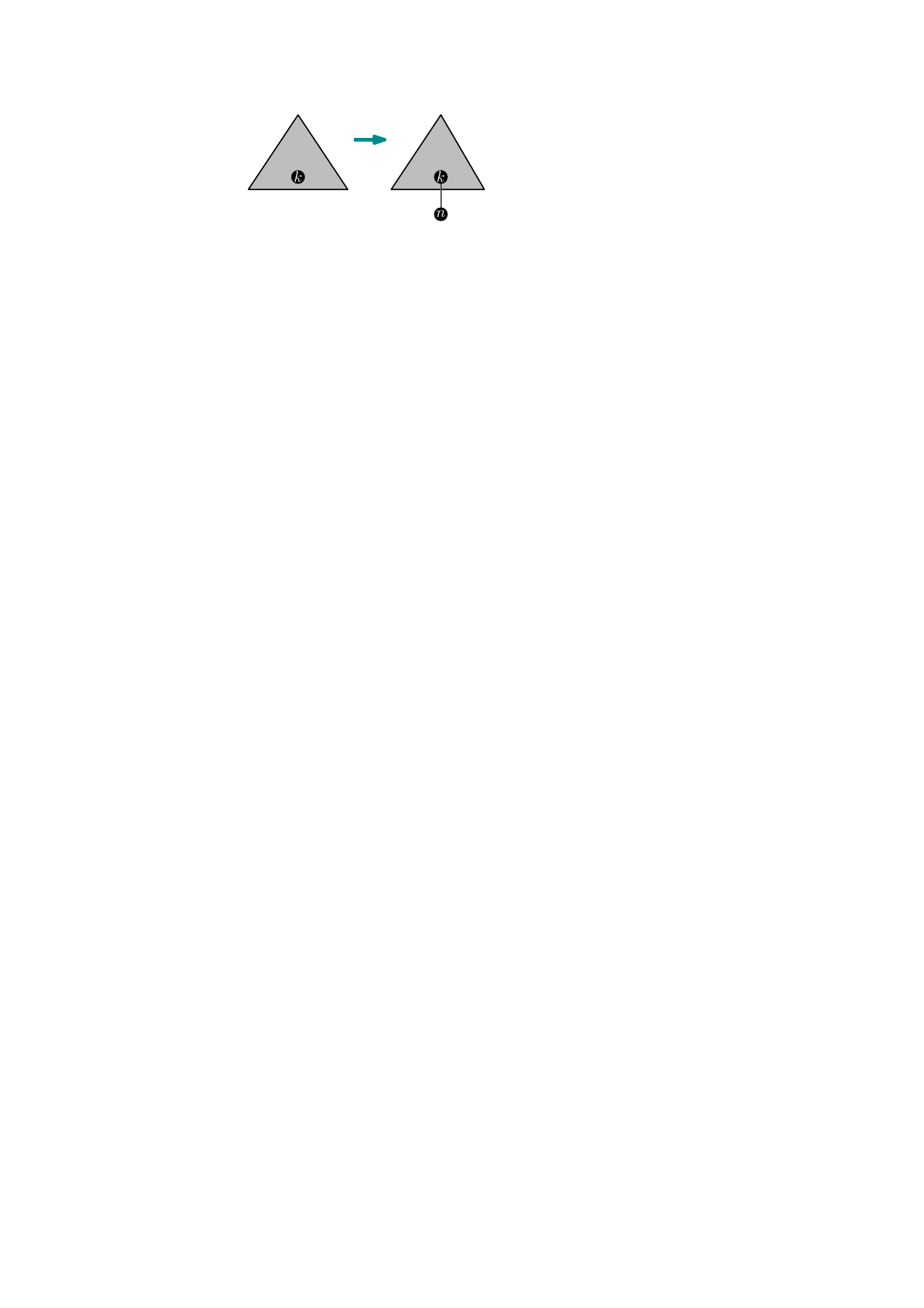}
\end{center}
 \end{minipage}
  \\
\hline
 \begin{minipage}{0.15 \textwidth}
\begin{center}
The number of twists increases.
\end{center}
\end{minipage}

&

 \begin{minipage}{0.75 \textwidth}
 \vspace{3pt}
\begin{center}
Choose a non-root vertex $\vertex k$. Let $T_1, \dots, T_b, T'_1, \dots, T'_c$ be the subtrees of $\vertex k$ arranged in such a way that
\[ \min(T_1) < \dots < \min(T_b) < k < \min(T'_1) < \dots < \min(T'_c). \]
Choose $a$ with $0 \leq a \leq b$, put $\vertex n$ in place of $\vertex k$, set $\vertex k$ as a child of $\vertex n$ and distribute the subtrees as follows: \\ \vspace{3pt}
 \includegraphics[scale=1]{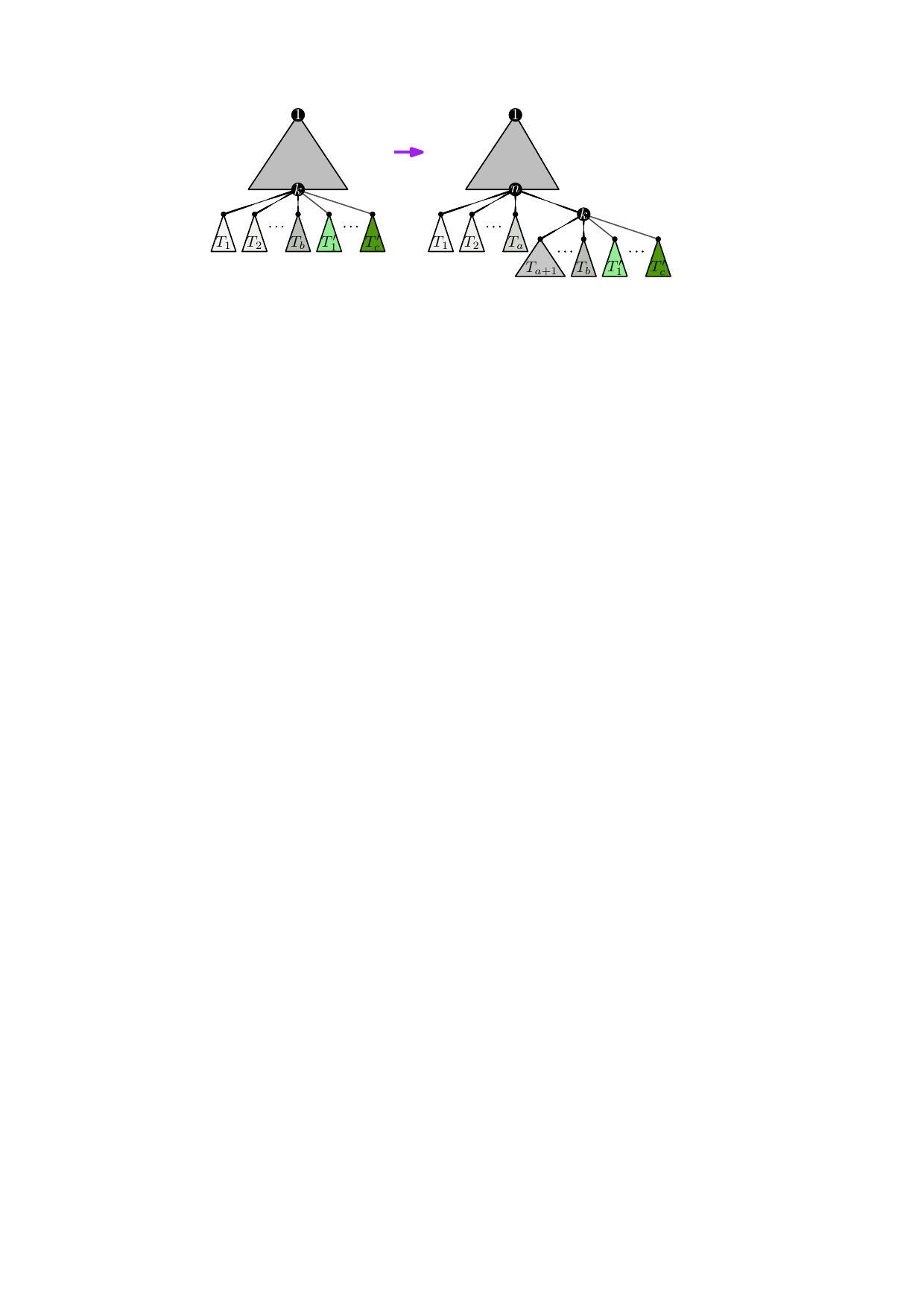} \vspace{3pt}
\end{center}

 \end{minipage}

\\ \hline
\end{tabular}
\end{center}

\caption{Shor's idea to enumerate all Cayley trees while controlling the number of twists \label{fig:shor-idea}}
\end{figure}

\begin{proof}
Even though it is a well-known result, the authors make the choice of rewriting here Shor's proof (for the sake of self-completeness). The idea is summarized by Figure~\ref{fig:shor-idea}.

Let $T$ be a Cayley tree with $n-1$ vertices. We build a new Cayley Tree $T'$ with $n$ vertices in one of the following ways:
\begin{itemize}
\item choose any vertex $\vertex k$ and attach $\vertex n$ to it as a leaf. The new edge is obviously increasing.
\item choose any vertex $\vertex k$ different from $\vertex 1$. We denote by $T_1,\dots,T_b,T'_1,\dots,T'_c$ the subtrees rooted at a child of $\vertex k$, sorted such that
\[ \min(T_1) < \dots < \min(T_b) < k < \min(T'_1) < \dots < \min(T'_c). \]
Thus, $b$ is the number of twists leaving $\vertex k$ and $c$ is the number of increasing edges leaving $\vertex k$.

First, change $\vertex k$ by $\vertex n$ and add $\vertex k$ as a child of $\vertex n$.

 Then, choose any $a \in \{0,\dots,b\}$. Leave $T_1, \dots, T_a$ attached to $\vertex n$ and move the subtrees $T_{a+1}, \dots,T_b,T'_1,\dots,T'_c$ as children of $\vertex k$.

 By doing so, we create a new twist between  $\vertex n$ and $\vertex k$
 and since we replace $\vertex k$ by a vertex with a larger label,
 all twists formerly attached to $\vertex k$ remain twists.
\end{itemize}
\begin{remark}\label{rk:method2}
After the addition of $\vertex n$ with the second method, the subtree rooted in $\vertex k$ is the subtree $S$
such that $\min(S)$ is maximal among all subtrees rooted at a child of $\vertex n$.
Indeed, $\min(S)$ is larger than $\min(T_i)$ for $i \le a$ since $\min(S) = \min(T_{a+1})$ if $a < b$ and $\min(S) = k$ if $a=b$.
 \end{remark}

We need now to make sure that we generate every Cayley tree of size $n$ exactly once. Consider then a Cayley tree $T$ with $n$ vertices.

If $\vertex n$ is a leaf, then there is a unique way to create $T$, that is applying the first method in $T-\vertex n$ on the parent of $\vertex n$.

If $\vertex n$ is not a leaf, then it can only be obtained with the second method.
Every edge leaving $\vertex n$ is a twist. Let $a+1$ be the number of such edges, and $S$ be the subtree attached to $\vertex n$ whose quantity $\min(S)$ is maximal over all these subtrees. We call $\vertex k$ the root of $S$.
Note that by Remark~\ref{rk:method2}, $\vertex k$ is the only possible choice for applying the second method to get $T$.

To recover the Cayley tree with $n-1$ edges, we contract the edge between $\vertex n$ and $\vertex k$ (we keep $\vertex k$ for the label of the merged vertex).
This corresponds to the reverse direction of the second method.
 Note that all of the $a$ twists that were leaving $\vertex n$ (excluded the edge we contracted) are still twists in the smaller tree,
 and the minima of the associated subtrees are smaller than the minima of the other subtrees attached to $\vertex k$.
  This is consistent with our choice in the second construction to attach the subtrees $T_1,\dots,T_a$ to vertex $\vertex n$ for some $a \in \{0,\dots,b\}$, and the other subtrees to $\vertex k$.

Finally, let us count the number of different Cayley trees $T'$ with $n$ vertices and $m-n+1$ twists we generate:
\begin{itemize}
\item The number of trees generated from the first method is $(n-1) \times c_{n-1,m-1}$, since $T$ must have $n-1$ vertices and $m-n+1 = (m-1) - (n-1) + 1$ twists.
\item  The number of ways of choosing a non-root vertex $\vertex k$ and an integer $a$ between $0$ and $b_k$, which is the number of twists adjacent to $\vertex k$ in $T$, is
\begin{equation} \label{eq:nbtwist}
 \sum_{\vertex k \neq \vertex 1} (1 + b_k) = (n-2) +  \sum_{\vertex k \neq \vertex 1} b_k\,.
 \end{equation}
Since no twist leaves $\vertex 1$,
$\sum_{\vertex k \neq \vertex 1} b_k$ is the total number of twists in $T$
which is $(m-n+1)-1 = m-n$. So the right operand in \eqref{eq:nbtwist} is equal to $m-2$. In total, the number of trees generated from the second approach is $(m-2) \times c_{n-1,m-2}$.
\end{itemize}
Summing the numbers for the two methods, we obtain the wanted recurrence.
\end{proof}

\subsection{Bijection(s) via generating trees}

Once a recursive decomposition of a combinatorial family such as above is found, it is easy to generate  exhaustively all objects up to a certain size.
 Formally, we can build a \emph{generating tree}, that is an infinite tree where the root is the only object of size $1$ and the children of a node are the objects that can be constructed from the node thanks to the recursive decomposition.

Figure~\ref{generating_trees} shows the top of the  generating trees of  increasing 1,2-trees and the genera\-ting tree of Cayley trees.

\fig{[width=0.95\textwidth]{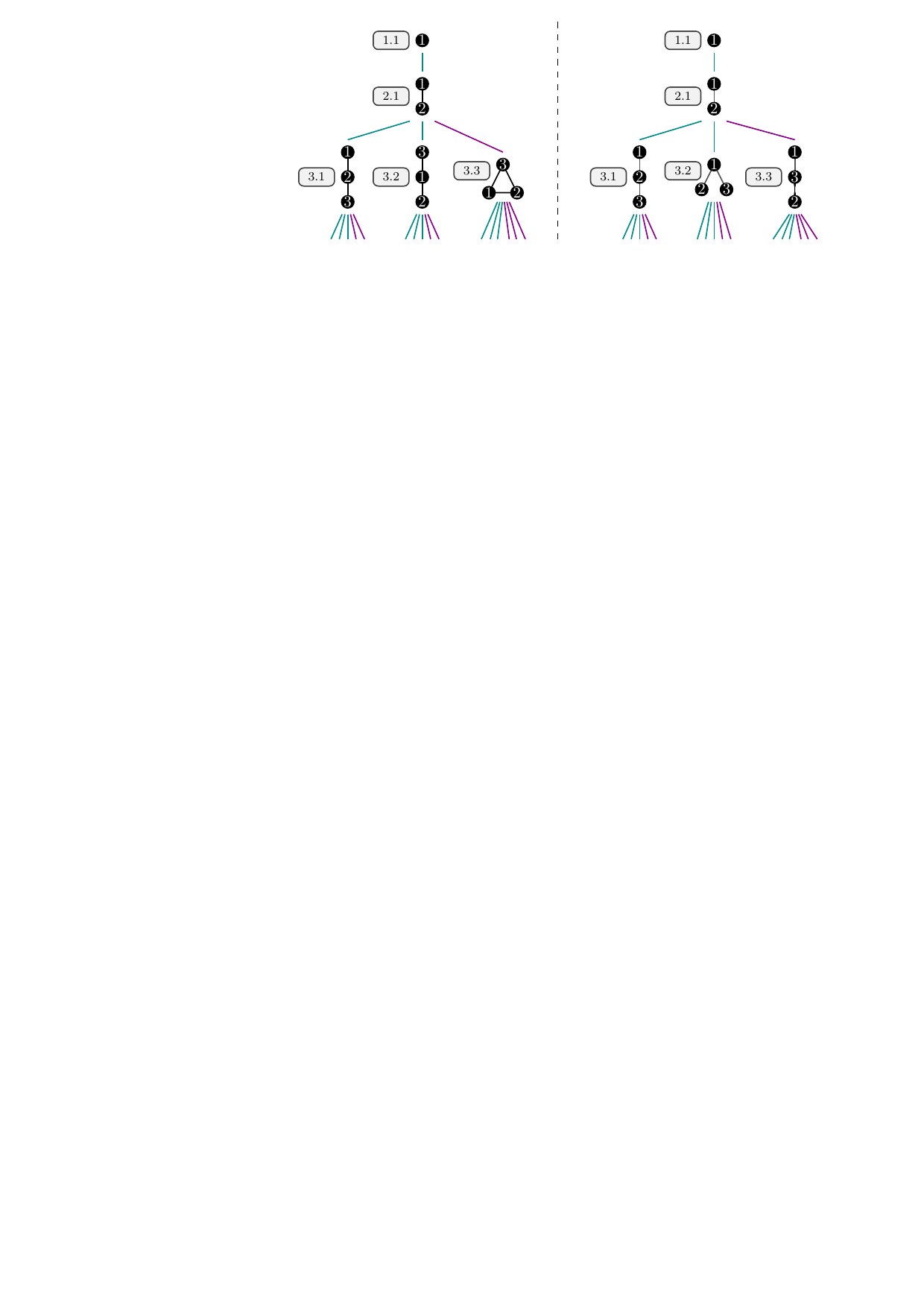}}{The first three levels of the generating tree of increasing 1,2-trees \textit{(left)} and the generating tree of Cayley trees \textit{(right)}.}{generating_trees}

If the recursive decompositions of two combinatorial families are isomorphic (here like increasing 1,2-trees and Cayley trees -- cf Propositions~\ref{prop:rec} and~\ref{prop:rec_cayley}), then the generating trees are identical.
 A natural bijection between the two families, albeit an artificial one, can be obtained by numbering the nodes of the two generating trees in the same way (for example with a breadth-first traversal). Figure~\ref{generating_trees} illustrates how it can be done with increasing 1,2-trees and Cayley trees.

Such a bijection may not be very satisfactory, in particular since the ordering of the children in a generating tree is not quite explicit.

In Section~\ref{s:bij} we are going to describe a more explicit bijection, which is
 unambiguous and has nice global properties.


\section{Proof by generating functions}
\label{s:gf}

In this section, we find a formula for the generating function of 1,2-trees  using a more analytic approach. 
Remark that this method should be generalizable to increasing \lektrees\  for every integer \(k \geq 1\).

More precisely, we prove the following:

\begin{theorem}
The exponential generating function of increasing 1,2-trees 
\[C(z,u) := \sum_{n,m \geq 0} c_{n,m} \frac{z^n u^m}{n!}\] counted by vertices (variable $z$) and edges (variable $u$) is given by
\begin{equation}
  \label{eq:Czu}
  C(z,u) = \Cayley\left( \frac{u z + u - 1}{u} e^{\frac{1-u}{u}} \right) 
   + \frac 1 {2 u^2} - \frac 1 2
  \end{equation}
  where $\Cayley(z)$ is the exponential generating function of (unrooted) Cayley trees counted by the vertices, that is \(\Cayley(z) = \sum_{n \geq 0} {n ^{n -2}} \frac{z^n} {n!} \).
\label{theo:gf}
\end{theorem}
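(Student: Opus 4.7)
The plan is to convert the recurrence of Proposition~\ref{prop:rec} into a linear first-order PDE for $C(z,u)$, solve it by the method of characteristics, and identify the resulting invariants with Cayley's series. Multiplying $c_{n,m} = (n-1)c_{n-1,m-1} + (m-2) c_{n-1,m-2}$ by $z^{n-1}u^m/(n-1)!$ and summing over $n \geq 2$, $m \geq 0$, the left-hand side becomes $\partial_z C - 1$ (the $-1$ accounting for the $n=1$ contribution $c_{1,0}=1$), while the two right-hand summands produce $uz\,\partial_z C$ and $u^3\,\partial_u C$ respectively. This yields
\[(1 - uz)\,\partial_z C - u^3\,\partial_u C = 1, \qquad C(0,u) = 0.\]

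For the method of characteristics, the identity $\tfrac{du}{-u^3} = dC$ shows that $C - \tfrac{1}{2u^2}$ is constant along any characteristic, while $\tfrac{dz}{1-uz} = \tfrac{du}{-u^3}$ rewrites as the linear ODE $\tfrac{dz}{du} - \tfrac{z}{u^2} = -\tfrac{1}{u^3}$. With integrating factor $e^{1/u}$, the substitution $t = 1/u$ reduces the quadrature to $\int -t\,e^t\,dt$ and produces a second invariant
\[W \;:=\; \frac{uz + u - 1}{u}\, e^{(1-u)/u},\]
so the general solution reads $C(z,u) = \tfrac{1}{2u^2} + F(W)$ for some univariate series $F$.

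To pin $F$ down I would use the initial condition $C(0,u)=0$. Setting $v := (1-u)/u$, the invariant $W$ at $z=0$ equals $-v\,e^v$, which is of the form $w\,e^{-w}$ with $w=-v$. Lagrange inversion of the rooted-Cayley-tree EGF $T(z) = \sum_{n\ge1} n^{n-1} z^n/n!$ (characterised by $T\,e^{-T} = z$) therefore gives $T(W|_{z=0}) = -v$, whence $1/u = 1 + v = 1 - T(W|_{z=0})$. Substituting into $F(W|_{z=0}) = -\tfrac{1}{2u^2}$ and invoking the classical identity $\Cayley(z) = T(z) - \tfrac{1}{2} T(z)^2$, I conclude that $F = \Cayley - \tfrac{1}{2}$ as a formal series, which is exactly~\eqref{eq:Czu}.

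The main obstacle I anticipate lies in the quadrature producing $W$: one must spot the integrating factor $e^{1/u}$ together with the change of variable $t = 1/u$, and then recognise that $W|_{z=0} = -v\,e^v$ has the ``Lagrangian'' shape $w\,e^{-w}$ that allows inversion in closed form via $T$. Once this shape is unveiled, the passage to $\Cayley$ through $\Cayley = T - T^2/2$ closes the derivation with no further difficulty.
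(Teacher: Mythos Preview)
Your argument is correct and follows essentially the same route as the paper: derive the PDE $(1-uz)\partial_z C - u^3\partial_u C = 1$ with $C(0,u)=0$, solve it by the method of characteristics, and identify the solution through the Lambert/rooted-Cayley inversion $T(we^{-w})=w$ together with $\Cayley = T - T^2/2$. The only cosmetic difference is that the paper first verifies that the stated formula satisfies the PDE and the initial condition and then runs the characteristics (parametrised by a real $t$) to establish uniqueness, whereas you extract the two invariants $C-\tfrac{1}{2u^2}$ and $W$ directly and determine $F$ from the boundary data; the underlying computations coincide.
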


By plugging $u=1$ in the previous equation, we obtain that $C(z,1) = \Cayley(z)$ which gives another proof that 1,2-trees and Cayley trees are counted by the same numbers.

\begin{remark}
  If we add a variable $t$ in $\Cayley$ to count the number of twists, 
  since \( \Cayley(z,t) = t C(\frac z t,t) \) by Theorem~\ref{theo:central}, 
  we can express the bivariate generating function $\Cayley(z,t)$ in terms of the univariate generating function $\Cayley(z)$ thanks to the previous theorem:
  \[
    \Cayley(z,t) = t \Cayley\left( \frac{ z + t - 1}{t} e^{\frac{1 - t}{t}  }  \right) + \frac 1 {2t} - \frac t 2. 
  \]  
  It should be interesting to find a combinatorial interpretation to this formula. We leave this as an open question.
\label{rem:interpretation}
\end{remark}

In order to prove Theorem~\ref{theo:central}, let us start by giving an equation satisfied by $C(z,u)$.

\begin{lemma}
The exponential generating function of increasing 1,2-trees $C(z,u)$ is solution of the Partial Differential Equation (PDE)
\begin{equation}
  \frac{\partial C}{\partial z} = 1 + z u \frac{\partial C}{\partial z} + u^3 \frac {\partial C}{\partial u} 
  \label{eq:pde}
\end{equation}
with the initial condition \(C(0,u) = 0\).
\label{lem:pde}
\end{lemma}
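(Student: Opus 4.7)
The plan is to derive the PDE algebraically from the recurrence already established in Proposition~\ref{prop:rec}, via standard exponential generating function bookkeeping. Since $C(z,u)$ encodes the array $(c_{n,m})$ with the $z$-variable carrying factorials, the natural move is to reindex the recurrence so that a $\partial_z$ appears on the left-hand side.

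First I would shift $n \mapsto n+1$ in Proposition~\ref{prop:rec} to obtain, for every $n \geq 1$,
\[
  c_{n+1,m} \;=\; n\, c_{n,m-1} \;+\; (m-2)\, c_{n,m-2},
\]
with the convention $c_{n,m} = 0$ when $m < 0$. Multiplying by $z^n u^m/n!$ and summing over $n \geq 0$ and $m \geq 0$, the left-hand side becomes $\partial_z C$, after isolating the $n=0$ term, which contributes only $c_{1,0} = 1$ (the initial condition of the recurrence). This accounts for the constant $1$ in the PDE.

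For the two terms on the right, I would treat them independently with the usual operator dictionary. The factor $n$ in $n\, c_{n,m-1}$ corresponds to applying $z \partial_z$ on the $z$-variable, while the shift $m-1 \to m$ contributes a factor $u$; together this yields $uz\, \partial_z C$. Analogously, $(m-2)\, c_{n,m-2}$ produces the operator $u \partial_u$ combined with a factor $u^2$ from the shift $m-2 \to m$, giving $u^3\, \partial_u C$. Both reindexings extend harmlessly to the full ranges $n \geq 0$, $m \geq 0$ because $c_{n,m}$ vanishes outside the admissible region. Combining the three pieces yields exactly \eqref{eq:pde}. The initial condition $C(0,u) = 0$ is immediate from $c_{0,m} = 0$ for all $m$.

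There is no real obstacle here: everything reduces to careful bookkeeping of indices and of the single boundary term $c_{1,0} = 1$, which is precisely what produces the additive constant in the PDE. The only subtlety worth double-checking is that no spurious terms appear when reindexing the sum for the second summand — specifically that $(m-2)c_{n,m-2}$ with $m \in \{0,1\}$ contributes nothing, so the reindexed sum starts cleanly at $m' = m - 2 \geq 0$.
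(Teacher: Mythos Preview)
Your proposal is correct and matches the paper's primary approach: the paper states that the PDE ``can be derived quite straightforwardly from the recurrence from Proposition~\ref{prop:rec}'' without spelling out the details, and your bookkeeping is exactly that straightforward derivation. The paper also sketches an alternative route via the symbolic method on the ordinary generating function followed by a Borel transform, but your argument is the one the paper implicitly invokes first.
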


\begin{proof}
This PDE can be derived quite straightforwardly from the recurrence from Proposition~\ref{prop:rec}.

We can also obtain this PDE with the help of symbolic method (see~\cite{flajolet-sedgewick}). 
Indeed, the class $\mathcal C$ of increasing 1,2-trees admits the combinatorial specification
\[
  \mathcal{C} = \quad \mathcal Z \quad  \DisjointUnion \quad \mathcal Z \mathcal U \times \Theta_{\mathcal Z} \mathcal C \quad \DisjointUnion \quad \mathcal Z \mathcal U^2 \times \Theta_{\mathcal U} \mathcal C
  \]
where $\mathcal Z$ and $\mathcal U$ are the atoms for the vertices and the edges, 
$\DisjointUnion$ is the disjoint union, 
and $\Theta_{\mathcal Z}$ and $\Theta_{\mathcal U}$ are the pointing operators for the vertices and the edges, respectively.
 This translates in terms of generating function as
\[C_{\text{ord}}(z,u) = z + z^2 u \frac {\partial C_{\text{ord}}}{\partial z}(z,u) + z u^3  \frac {\partial C_{\text{ord}}}{\partial u }(z,u)\]
where $C_{\text{ord}}(z,u)$ is the \emph{ordinary} generating function \( \sum_{n,m \geq 1} c_{n,m} z^n u^m \). 
Then the PDE from the statement can be obtained via a Borel transform\footnote{
  The \emph{Borel transform} $\mathcal{B}$ maps ordinary generating functions $f(z) = \sum_{n} f_n z^n$ to exponential generating functions $\left(  \mathcal{B}f \right)(z)  := \sum_{n} f_n \frac{z^n}{n!}$. This operator is linear and has nice properties like $\mathcal B (z f) = \int \mathcal B f$.
  See for example \cite{le_premier_papier_de_matthieu}.
  }
and a differentiation with respect to~$z$.
\end{proof}

We can now tackle the proof of Theorem~\ref{theo:gf}.

\begin{proof} [Proof of Theorem~\ref{theo:gf}] 

  First, we check that the expression of $C(z,u)$ given by Equation~\eqref{eq:Czu} is compatible with the PDE of Lemma~\ref{lem:pde}. 
  Since  
  \[ 
    \pdiff C z (z,u)= e^{\frac{1-u}u}\Cayley\! '\left( \frac{u z + u - 1}{u} e^{\frac{1-u}{u}} \right) 
  \] 
  and 
  \[ \pdiff C u (z,u) = \frac {1-zu}{u^3}e^{\frac{1-u}u}\Cayley\! '\left( \frac{u z + u - 1}{u} e^{\frac{1-u}{u}} \right) - \frac 1 {u^3}, \]
  we have
  \begin{equation}
    (1-zu) \pdiff C z - u^3 \pdiff C u = 1, 
    \label{eq:second_pde}
  \end{equation}
  which is consistent with PDE~\eqref{eq:pde}. Note that we could have replaced $\Cayley$ by any other univariate function in Formula~\eqref{eq:pde} and it would still satisfy PDE~\eqref{eq:pde}. However, $C(z,u)$ also needs to satisfy the initial condition \(C(0,u) = 0\). To check this, we express $\Cayley$ in terms of the Lambert $W$ function which is implicitely defined by $W(t)\exp(W(t)) = t$
  and satisfies $W(t\exp(t)) = t$.
  More precisely,  formula  \cite[Eq. (2.1)]{lambert-function} translates
 \begin{equation}
  \Cayley (x) = - W(-x) - \frac 1 2 W(-x)^2 
  \label{eq:Cayley_in_terms_of_W}
 \end{equation}
  and hence
  \begin{eqnarray*}
    C(0,u) & = & \Cayley\left(\frac{u-1} u e^{\frac{1-u} u} \right) + \frac 1 {2 u^2} - \frac 1 2 \\ 
    & = & - W\left(\frac{1-u} u e^{\frac{1-u} u} \right) - \frac 1 2 W\left(\frac{1-u} u e^{\frac{1-u} u} \right)^2 + \frac 1 {2 u^2} - \frac 1 2 \\
    & = & \frac {u-1}{u} - \frac{(u - 1)^2} {2 u^2} + \frac 1 {2 u^2} - \frac 1 2 = 0.
  \end{eqnarray*}
    
To conclude, we need to establish the uniqueness of the solution of PDE~\eqref{eq:pde}. It can be done by strong analytic theorems like the Cauchy-Kovalevskaya theorem \cite{dieudonne}. However, we have chosen to roll out the \emph{method of characteristics} \cite{methode-caracteristiques} to show the reader how Formula~\eqref{eq:Czu} can be found. 

Let $C(z,u)$ be any solution of PDE~\eqref{eq:pde}, and set two functions $z(t)$ and $u(t)$ such that 
\begin{equation}
  \left\{
    \begin{array}{rclcl}
  z'(t) & = & 1 -z(t)u(t), &\quad& z(0) = 0 \\
  u'(t) & = & -u(t)^{3}, &\quad& u(0) = 1/\sqrt K,
\end{array}\right. 
\label{eq:system_de}
\end{equation}  
with $K$ be a real positive parameter.
The choice of \eqref{eq:system_de} is justified by the fact that 
\[
  \pdiff \, t  \left( C\left(z(t),u(t)\right) \right)  = (1-z(t)u(t)) \pdiff C z (z(t),u(t)) - u(t)^3 \pdiff C u (z(t),u(t)) = 1
\]
because $C$ satisfies PDE~\eqref{eq:second_pde}. Therefore, this equation can be integrated from 0 to $t$:
\[ C(z(t),u(t)) = C(z(0),u(0)) + t = C(0,1/\sqrt K) + t \]
and hence 
\begin{equation}
  C(z(t),u(t))= t 
  \label{eq:to_be_inversed}
\end{equation} since $C(0,u) = 0$.
We now solve the system of differential equations~\eqref{eq:system_de}. 
First, we see that $-\dfrac{u'}{u^3}$ (which is equal to $1$) is the derivative of $\dfrac 1 {2u^2}$. So by integrating between $0$ and $t$ we see that $\dfrac 1 {2u(t)^2} - \dfrac 1 {2u(0)^2} = t$, and so
\[u(t) = \frac {1} {\sqrt{K + 2t}}.\]
Plugging this into the differential equation for $z(t)$ (see Equation \eqref{eq:system_de}) 
yields a linear ordinary differential equation of order $1$. 
In order to solve it,
multiply both sides by $\exp(\sqrt{K+2t})$, and obtain 
\[e^{\sqrt{K+2t}} z'(t) + \frac 1 {\sqrt{K+2t}}e^{\sqrt{K+2t}}z(t) = e^{\sqrt{K+2t}}.\]
The left-side member is at sight the derivative of \(e^{\sqrt{K+2t}}z(t) \), while the indefinite integral of \(e^{\sqrt{K+2t}}\) is \( \left(  \sqrt{K+2t} - 1 \right) e^{\sqrt{K+2t}}\).
This is why by integration from $0$ to $t$ we get 
\[  e^{\sqrt{K+2t}} z(t) -  e^{\sqrt{K}} z(0) = \left(  \sqrt{K+2t} - 1 \right) e^{\sqrt{K+2t}} - \left(  \sqrt{K} - 1 \right) e^{\sqrt{K}} \]
and since $z(0) = 0$,
\[   z(t) =  \sqrt{K+2t} - 1 - \left(  \sqrt{K} - 1 \right) e^{\sqrt{K} - \sqrt{K+2t} } .\]
Now  that we have the  expressions of $z(t)$ and $u(t)$, we want to solve  $\left\{ \begin{array}{ll}
  z(t) = z \\
  u(t) = u
\end{array} \right.$ in terms of $K$ and $t$.
Plugging $\sqrt{K + 2t} = u^{-1} $ into  the equation $z(t)=z$ gives 
\[ z = u^{-1} - 1 - \left( \sqrt K - 1 \right) e^{\sqrt K - u^{-1} } \]
which can be rewritten as
\[  \left( \sqrt K - 1 \right) e^{\sqrt K - 1} =  \left( - z + u^{-1} - 1 \right) e^{u^{-1} - 1} .  \] 
Using the Lambert W function described above, we then see that
\[ \sqrt K - 1 = W\left(\left( \sqrt K - 1 \right) e^{\sqrt K - 1} \right) = W \left(    \left( - z + u^{-1} - 1 \right) e^{u^{-1} - 1} \right),\]
which implies that
\[ K = W \left(    \left( - z + u^{-1} - 1 \right) e^{u^{-1} - 1} \right)^2 + 2  W \left(    \left( - z + u^{-1} - 1 \right) e^{u^{-1} - 1} \right) + 1. \]
Since $K = \frac 1 {u^2} - 2 t$,
we have 
\[t =  -  \frac 1 2 W \left(    \left( - z + u^{-1} - 1 \right) e^{u^{-1} - 1} \right)^2 -  W \left(    \left( - z + u^{-1} - 1 \right) e^{u^{-1} - 1} \right) + \frac 1 {2u^2} - \frac 1 2.  \]
Using Equation~\eqref{eq:to_be_inversed} and the expression of the $\Cayley$ function in terms of $W$ (see Equation~\eqref{eq:Cayley_in_terms_of_W}), we obtain a formula which is consistent with Equation~\eqref{eq:Czu}.
\end{proof}

\section{Proof by bijection}
\label{s:bij}

This section establishes a direct bijection between increasing 1,2-trees and Cayley trees. Actually, we propose two descriptions of the mapping from 1,2-trees to Cayley trees
(see Figure~\ref{overview} for an overview).
The first one uses three different steps and intermediary structures. It is useful because each step is easily reversible, which sums up into the reciprocal bijection.
The second is condensed in a single step, but it is more difficult to reverse. Though, it is much easier to see in this condensed description a generalization of Shor's decomposition of Cayley trees (as in Figure~\ref{fig:shor-idea}).

\fig{[width=0.95 \textwidth]{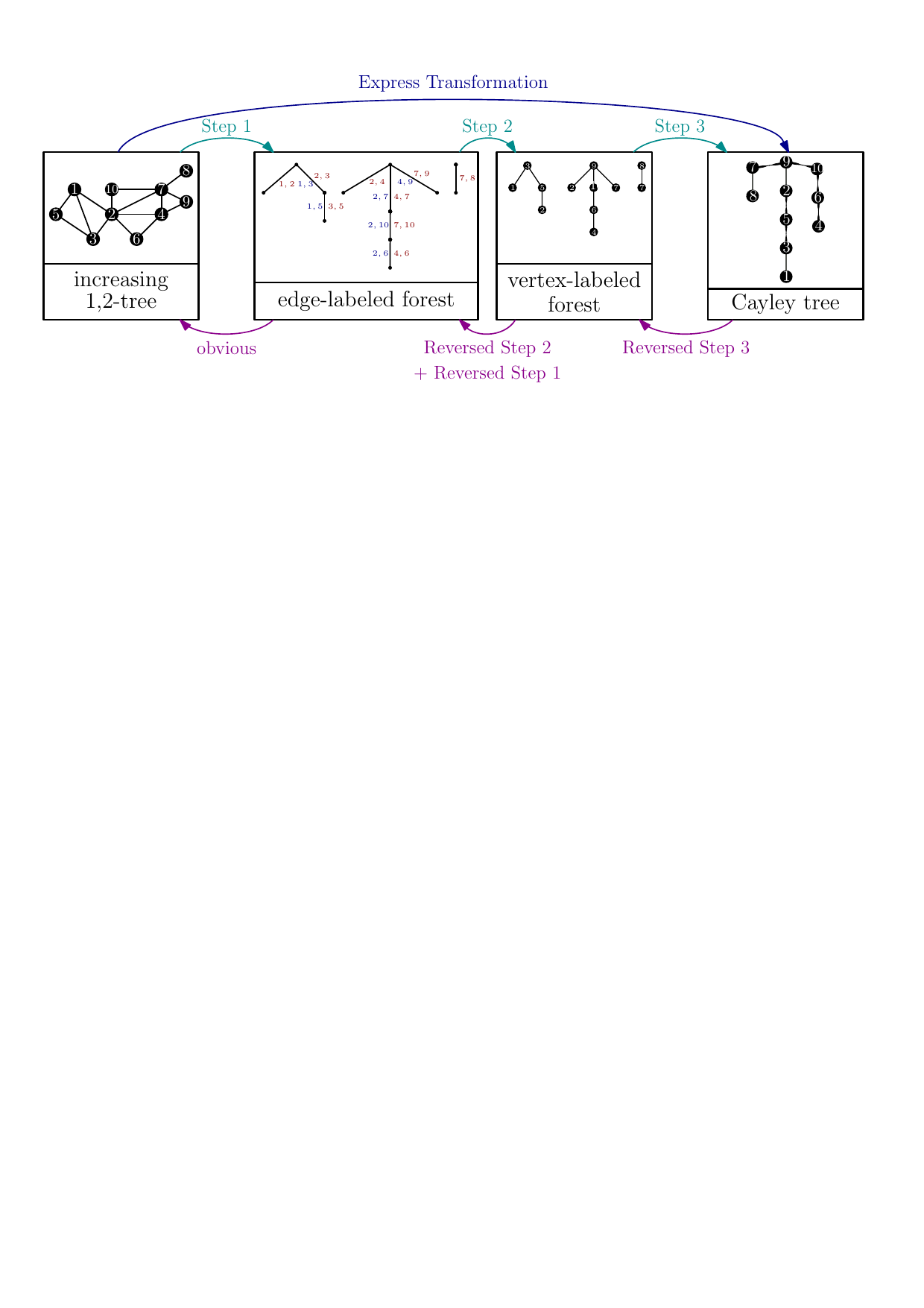}}{  Overview of the bijection from increasing 1,2-trees to Cayley trees. }{overview}

\subsection{From increasing 1,2-trees to Cayley trees}
\label{ss:from-12-to-cayley}

Let $G$ be an increasing 1,2-tree on $n$ vertices, which we are going to transform into a Cayley tree $\tau(G)$.
Remarkably, this direction makes no mention of twists.
The increasing 1,2-tree from Figure~\ref{ex12tree} serves as an example for the construction.

\fig{[scale = 1.4]{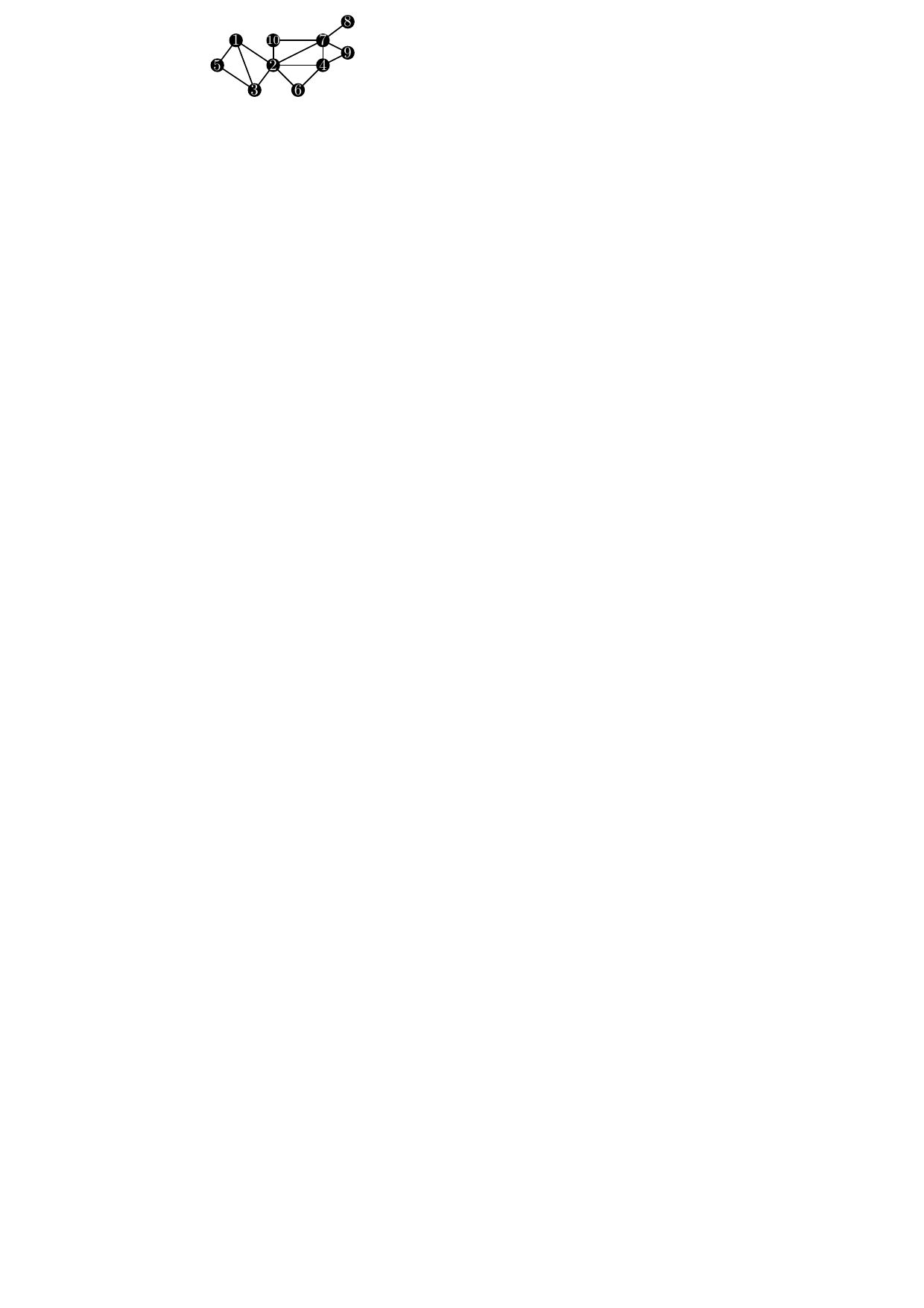}}{An example of an increasing 1,2-tree}{ex12tree}

\paragraph{Step 1: transformation into a plane forest $F$ labeled by edges of $G$.}
This step is illustrated in Table~\ref{table:step1}.

Forest $F$ is plane, meaning that for each vertex, its children are ordered from left to right.
However, the order between the trees of $F$ does not matter.

Each edge of $F$ will carry one or two labels, that we write along the edge on the left or on the right. When there is only one label, it will always be on the right, while when there are two labels, they will be distributed on both sides. These labels correspond to the edges of $G$. The order between the trees of $F$ does not matter.

Let us initialize $F$ as an empty forest.   
We successively consider every vertex $\vertex v \neq 1$ of $G$, starting from vertex $\vertex 2$ and ending to vertex $\vertex n$. By construction of an increasing 1,2-tree, vertex $\vertex v$ has $1$ or $2$ neighbors smaller than it.
\begin{itemize}
\item \textbf{Rule 1}. If $\vertex v$ has only one such neighbor $\vertex x$ (which means it was attached to $\vertex x$ as a leaf during the building of $G$), then we add a new rooted tree to $F$, consisting in a sole edge labeled on its right by $(x,v)$.
\item \textbf{Rule 2}. If $\vertex v$ has two neighbors $\vertex x$ and $\vertex y$ such that $x < y < v$ 
  (which means it was attached to edge $\vertex x\vertex y$ as a triangle during the building of $G$),
  then we modify the forest depending on whether the label $(x,y)$ is on the left or right of the edge $e$ that carries it.

\begin{table}
\begin{tabular}{|m{0.15\textwidth}|c|c|}  \firsthline

\begin{center}
\textbf{Rule 1.} \\
$\vertex v$ is attached to a vertex $\vertex x$ as a leaf
\end{center}

 &

\multicolumn{2}{c|}{
 \begin{minipage}{0.75 \textwidth}
\begin{center}
 \includegraphics[scale=1.1]{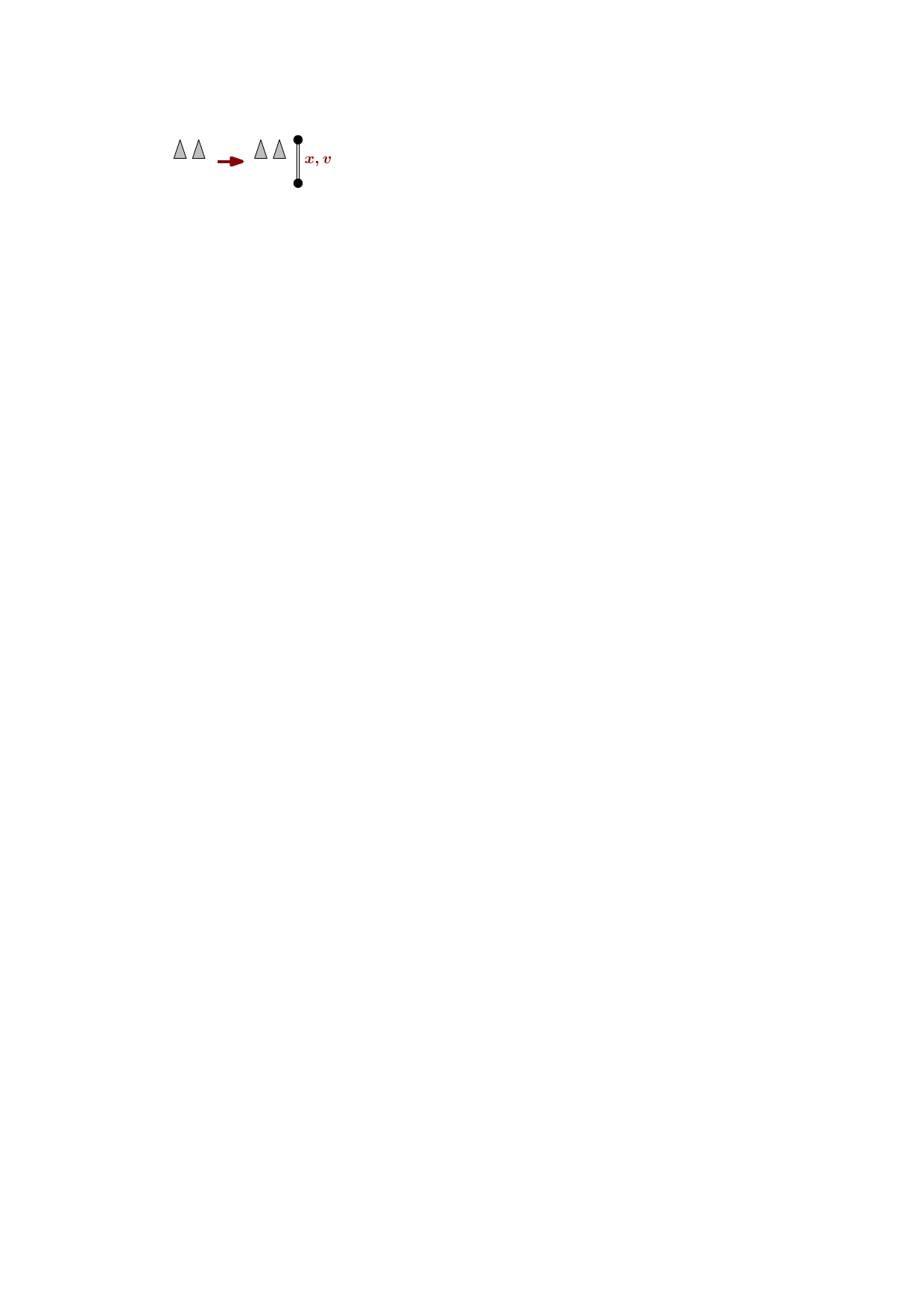}
\end{center}
 \end{minipage}
   }

  \\ \hline

\begin{center}\textbf{Rule 2.} \\
$\vertex v$ is attached to $\vertex x$ and $\vertex y$ as a triangle
\end{center}
&

\begin{minipage}{0.33 \textwidth}
\begin{center}
\vspace{5pt}
the label is on the left \\ \vspace{5pt}
 \includegraphics[width=0.99 \textwidth]{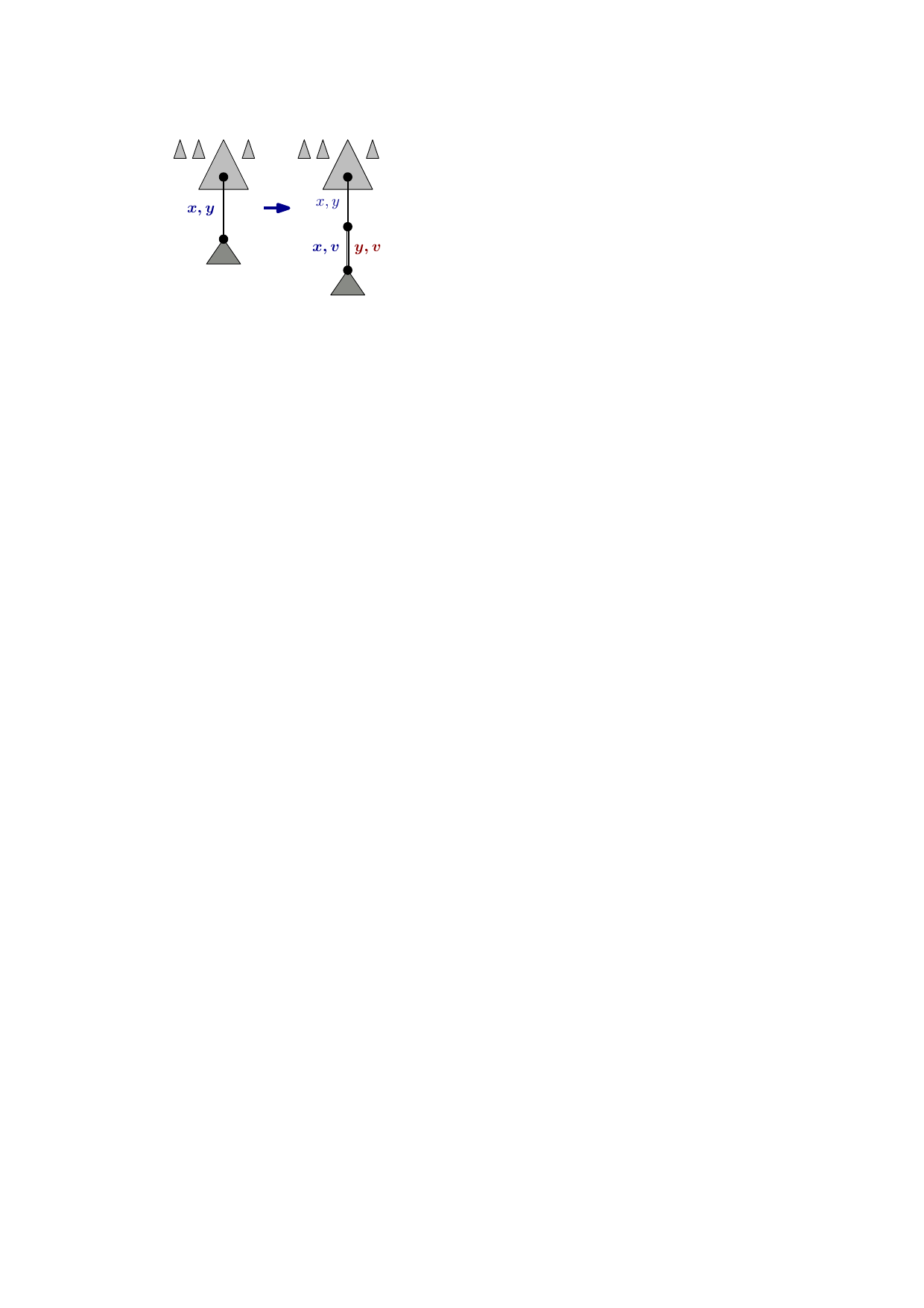}
 \vspace{-10pt}
\end{center}
\end{minipage}

 &
\begin{minipage}{0.44 \textwidth}
\begin{center}
the label is on the right \\
 \vspace{5pt}
  \includegraphics[width=0.99 \textwidth]{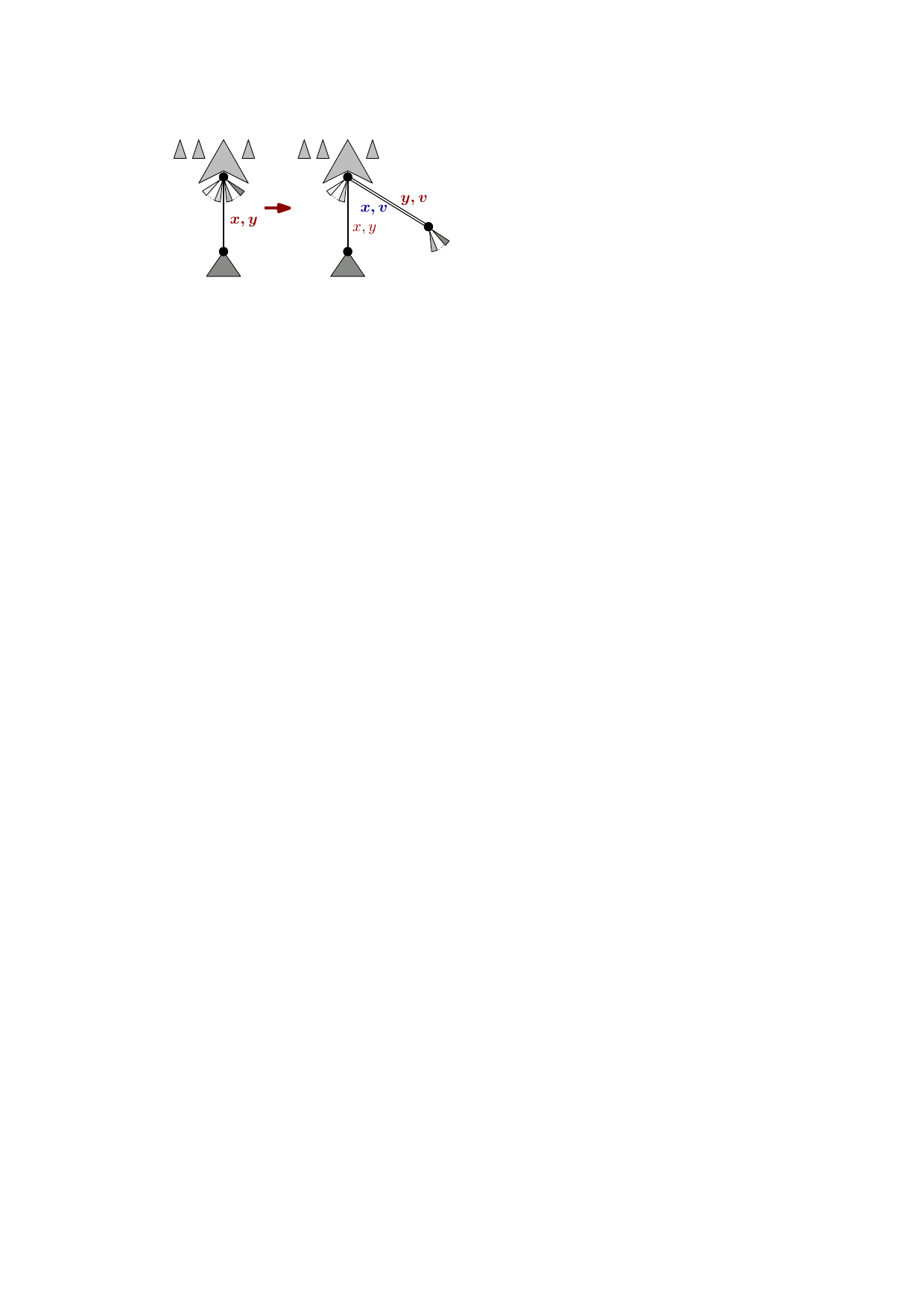}
\end{center}
\end{minipage}

  \\ \hline
\end{tabular}

\caption{Three rules to transform an increasing 1,2-tree into a plane forest labeled by edges (Step 1 of the transformation from increasing 1,2-trees to Cayley trees)\label{table:step1}}
\end{table}

\begin{itemize}
\item \textbf{Rule 2 Left}. If $(x,y)$ is on the left of $e$, then we split $e$ into two edges $e$ and $e'$, with $e$ above $e'$
(see Table~\ref{table:step1} bottom left).
Doing so, every subtree incident and below $e$ becomes incident and below $e'$ instead.
We put the label $(x,v)$ on the left of $e'$, and the label $(y,v)$ on the right of $e'$.
The labels on $e$ remain the same.
\item  \textbf{Rule 2 Right}. If $(x,y)$ is on the right of $e$, then we insert a new edge $e'$ in $F$ as the right sibling of $e$, pushing below $e'$ every subtree that shares an endpoint with $e$ and on the right of $e$ (see Table~\ref{table:step1} bottom right). We put a label $(x,v)$ on the left of $e'$, and a label $(y,v)$ on the right of $e'$.
\end{itemize}
\end{itemize}

A full example for the graph of Figure~\ref{ex12tree} is detailed at Table~\ref{table:step1-detailed}.

\begin{table}[h!]

\begin{center}
\newcounter{ct}
\begin{tabular}{|m{39pt}|c|c|c|c|c|c|c|}
\hline
vertex $v$ &\setcounter{ct}{2}
\whiledo {\value{ct} < 6}
{\thect
\if \thect 5 \else \esperluette \fi
\stepcounter {ct}
  }
\\ \hline
forest $F$ &
 \setcounter{ct}{2}
\whiledo {\value{ct} < 6}
{
$\vcenter{\hbox{\includegraphics{images/step1v\thect}}}$
\if \thect 5 \else \esperluette  \fi
\stepcounter {ct}}
  \\ \hline
vertex $v$ &

\multicolumn{2}{|c|}{
6 }

&

\multicolumn{2}{|c|}{
7 }

  \\ \hline
 forest $F$ &
  \multicolumn{2}{|c|}{
 $\vcenter{\hbox{\includegraphics{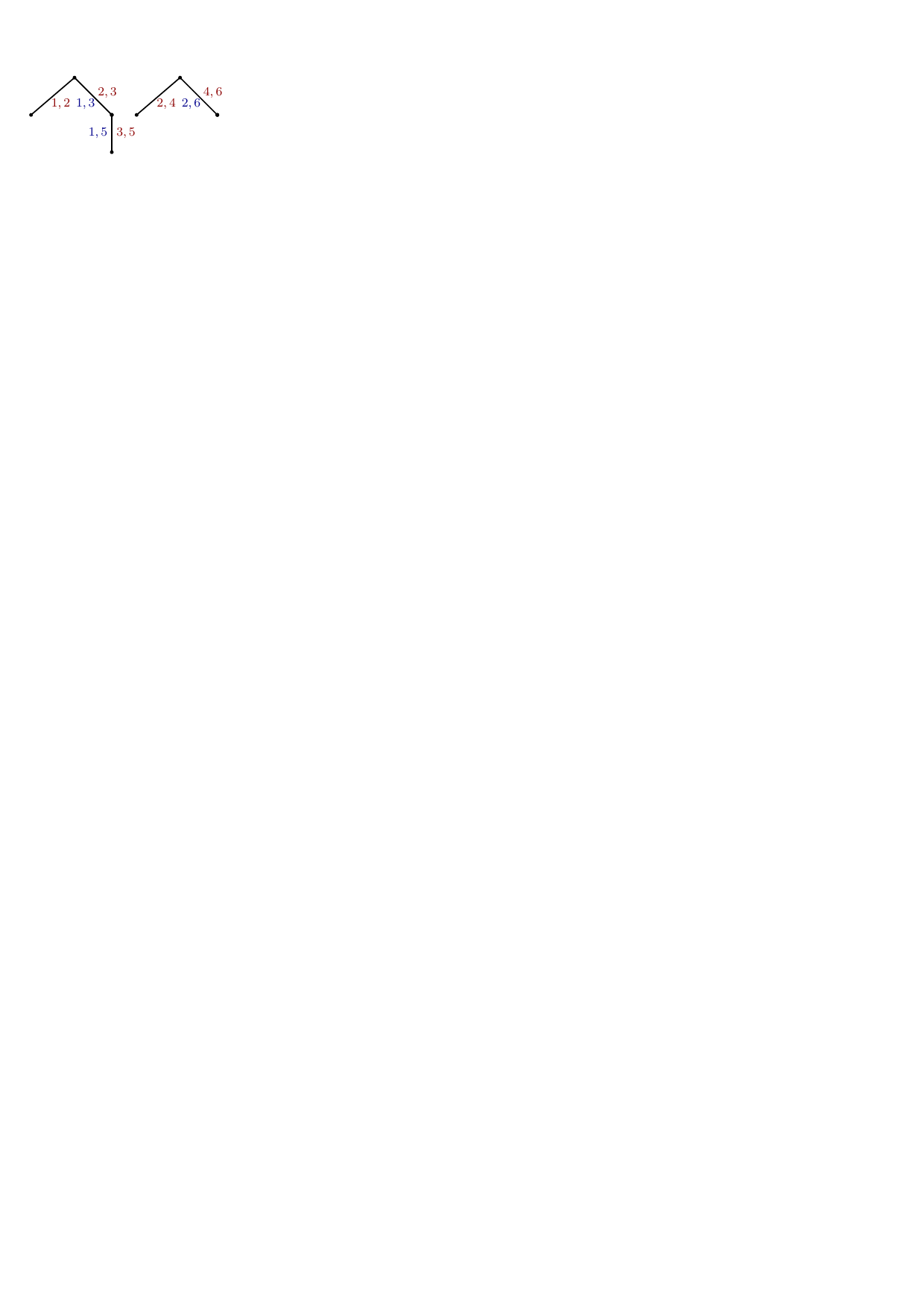}}}$  }
&
  \multicolumn{2}{|c|}{
 $\vcenter{\hbox{\includegraphics{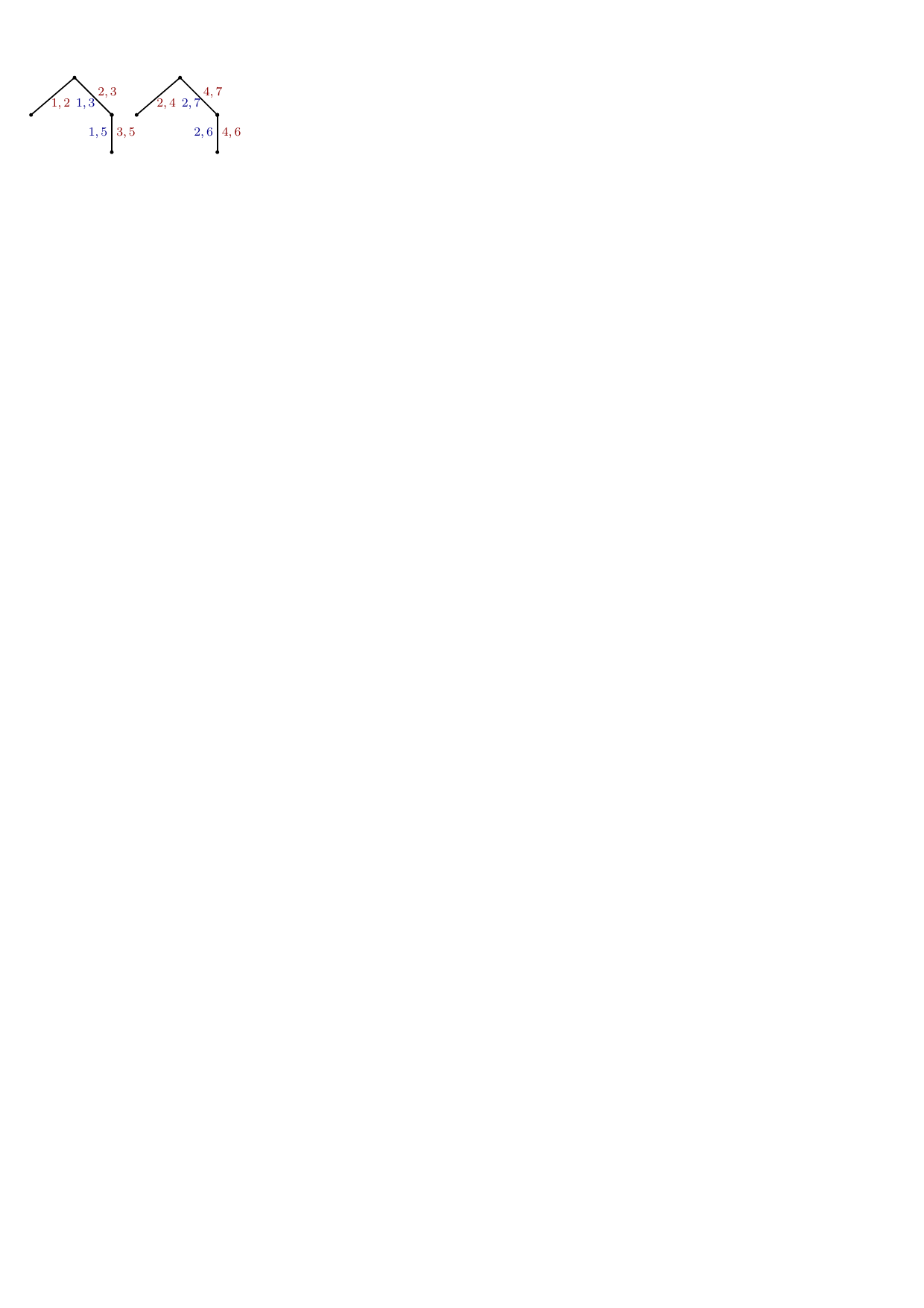}}}$  }

\\
  \hline

  vertex $v$ &

\multicolumn{2}{|c|}{
8 }

&

\multicolumn{2}{|c|}{
9 }

  \\ \hline
 forest $F$ &
  \multicolumn{2}{|c|}{
 $\vcenter{\hbox{\includegraphics{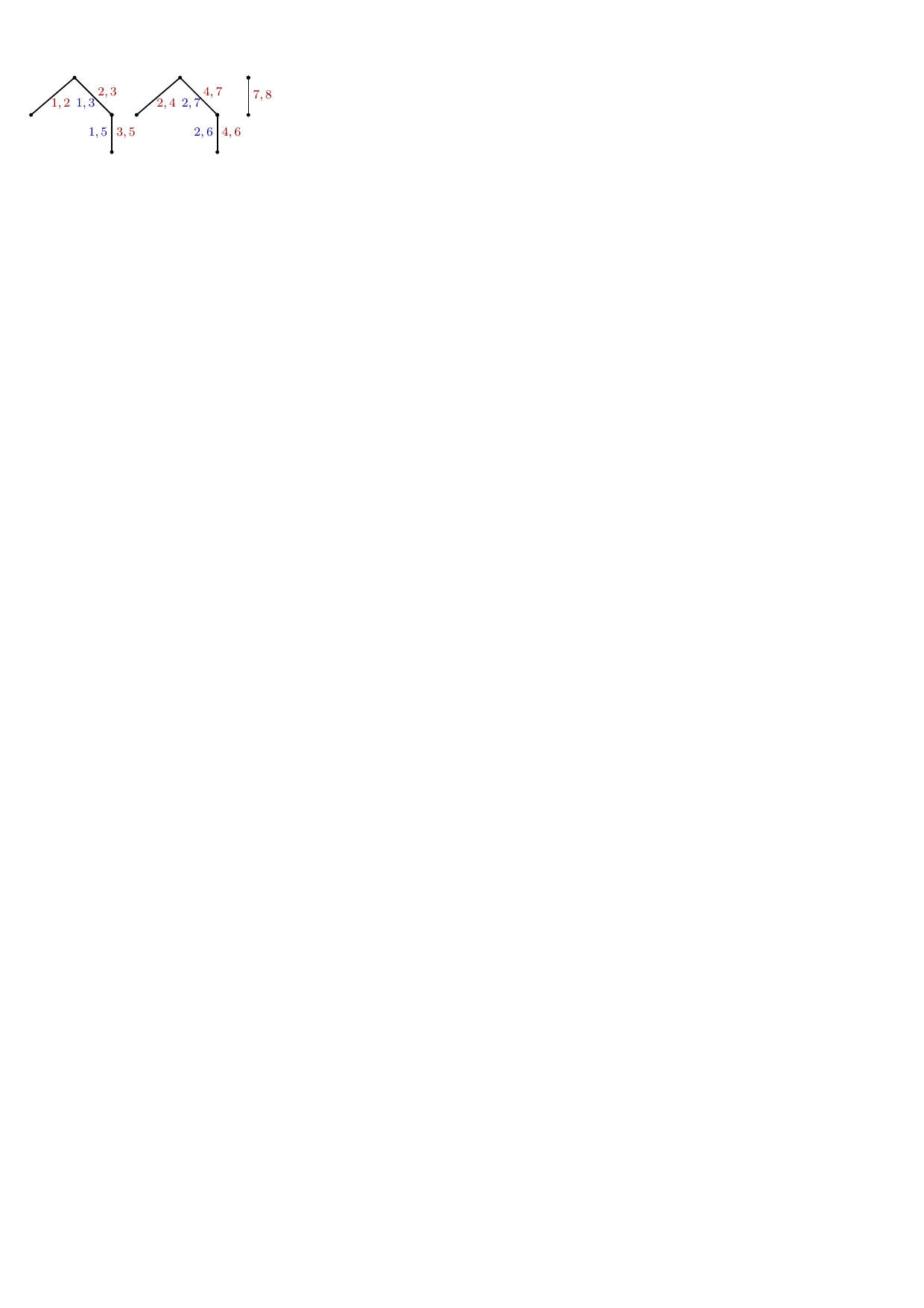}}}$  }
&
  \multicolumn{2}{|c|}{
 $\vcenter{\hbox{\includegraphics{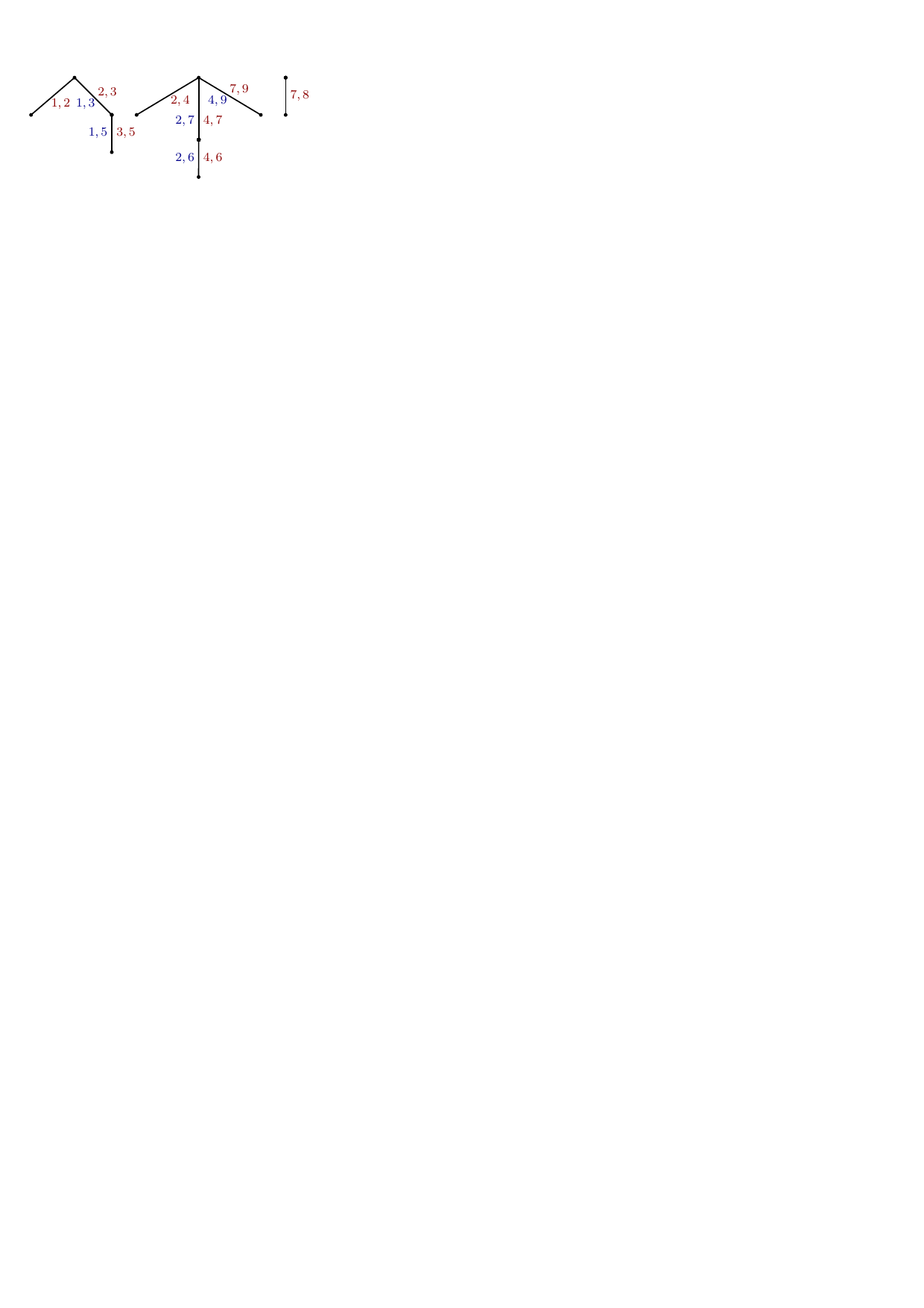}}}$  }

\\
  \hline

    vertex $v$ &

\multicolumn{4}{|c|}{
10 }

  \\ \hline
 forest $F$ &
  \multicolumn{4}{|c|}{
 $\vcenter{\hbox{\includegraphics{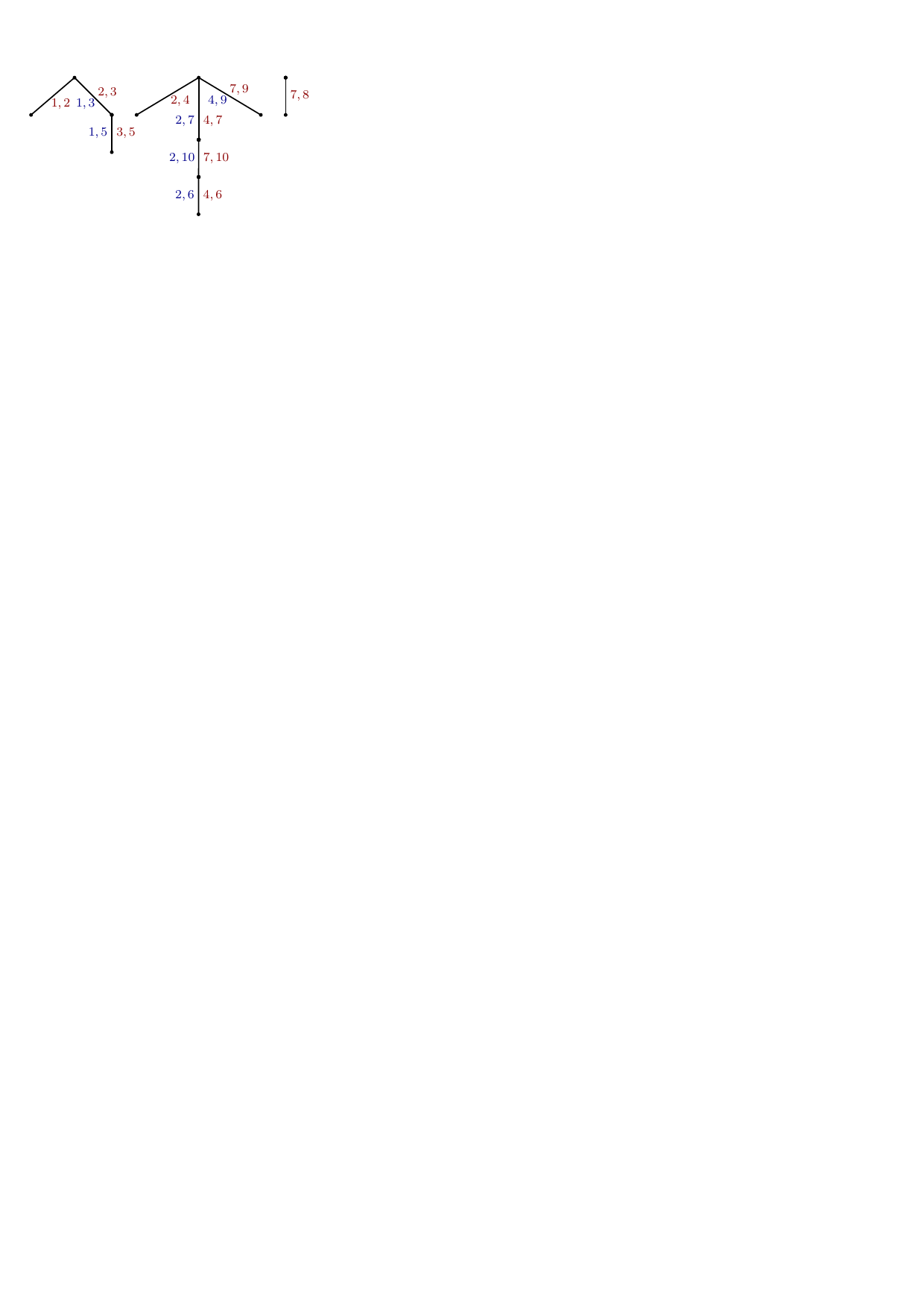}}}$  }

\\
  \hline
\end{tabular}
\end{center}
\caption{Construction of the forest $F$ edge by edge, starting from the 1,2-tree of Figure~\ref{ex12tree} }
\label{table:step1-detailed}
\end{table}


\begin{definition}[root edge]
  The \emph{root edge} of a tree is the leftmost edge incident to the root.
 \end{definition}

\begin{lemma} Let $t$ be a tree of $F$. At the end of Step 1, the root edge of $t$ leads to a leaf and does not carry a left label, but a right label of the form $(a,b)$, where $a$ is the smallest integer appearing in $t$.
\label{lem:left_leaf}
\end{lemma}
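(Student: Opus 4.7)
The plan is to proceed by induction on the vertex $v = 2, 3, \ldots, n$ currently being processed, showing that after each step every tree $t$ in $F$ satisfies the three claims: its root edge (i) leads to a leaf, (ii) carries no left label, and (iii) carries a right label $(a, b)$ with $a = \min(t)$. The base case $v = 2$ will be immediate: Rule 1 creates a single-edge tree labeled $(1, 2)$ on the right, which trivially meets all three conditions.

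For the inductive step I would analyse each rule in turn. Rule 1 produces a brand-new tree consisting of a single right-labeled edge $(x, v)$ with $x < v$, satisfying the property exactly as in the base case, and it leaves the other trees untouched. Rule 2 Left acts on an edge $e$ carrying a left label; by the inductive hypothesis no root edge carries a left label, so $e$ is necessarily not a root edge, and the splitting operation modifies $t$ strictly below $e$. Neither the root nor its incident edges are changed, and the labels on $e$ remain the same. Moreover, the only new value introduced by the rule is $v$, which is strictly larger than every label previously in $t$, so $\min(t)$ is unchanged and property (iii) survives.

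The subtle case is Rule 2 Right when $e$ happens to be the root edge of its tree. If $e$ is not a root edge, the argument proceeds as above. If $e$ is the root edge, then by the induction hypothesis $e$ leads to a leaf, so its lower endpoint has no children; hence no subtree below $e$ needs to be moved, and only right siblings of $e$ at the root are pushed below the newly inserted $e'$. Since $e'$ is installed as the right sibling of $e$, the edge $e$ remains the leftmost edge incident to the root; it still leads to the same leaf (nothing is hung off its lower endpoint), still carries no left label, and its right label is unchanged. Because $v$ is strictly larger than every existing label in $t$, the minimum still equals the first component of this right label.

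The main obstacle will be precisely this last subcase, since Rule 2 Right is the only operation that can alter the structure at the root of an existing tree. The induction hypothesis on property (i), namely that the root edge led to a leaf, is what makes the argument go through: it ensures that the subtrees displaced by the rule contain no descendant of $e$'s lower endpoint, so that $e$ is neither dislodged from the leftmost position nor severed from its leaf.
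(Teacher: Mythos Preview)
Your proposal is correct and follows essentially the same inductive approach as the paper's proof: both argue that the property holds when a tree is first created by Rule~1 and is preserved by every subsequent application of Rule~2, the key observation being that Rule~2 Left cannot apply to a root edge (since by induction the root edge carries no left label) and that Rule~2 Right on the root edge only adds a right sibling, leaving the root edge a leaf with its right label intact. Your treatment of the minimum---noting that the only genuinely new integer introduced at each step is $v$, while $x$ and $y$ already appeared in $t$---matches the paper's final paragraph exactly.
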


\begin{proof} In order to create the tree $t$, Rule 1 must have been applied. At this moment, $t$ consists in a single edge with a right label of the form $(a,b)$ with $a < b$. The property holds at this point and we prove that it continues at each substep of Step 1 by induction.

Let us suppose that an edge $e'$ is added to $F$.
If $e'$ is in a different tree than $t$, then it is clear that the property is still satisfied.

Assume that $e'$ is added to $t$. It must come with Rule 2 Left or Rule 2 Right. Let $e$ be the edge on which Rule 2 has been applied.

If $e$ is not the root edge of $t$, then the latter is still a leaf and keeps the same right label.

Let us assume now that $e$ is the root edge of $t$. Since by induction $e$ does not have a left label, Rule 2 Right must have been applied. A new right sibling is added to $e$, but nothing below $e$. The edge $e$ remains a leaf. 

Finally, the integers $x,y,v$ appearing in the left label $(x,v)$ and right label $(y,v)$ of $e'$ must be at least equal to $a$ 
since $v$ is the largest integer in the forest so far, 
and $x$ and $y$ were already in $t$ before insertion of $e'$
 (and so must be at least $a$ by induction).
\end{proof}

In the following, we use the following definition.

\begin{definition}[end label]
 The \emph{end label} of an edge $e$ with right label $(x,y)$ is $y$.
\end{definition}

Note that if $e$ also has a left label $(x',y')$, then by construction $y=y'$.

\paragraph{Step 2: label the vertices of $\boldsymbol F$.}

Let $t$ be any tree of $F$.  We are going to label the vertices of $t$ by integers between $1$ and $|G|$.

First, consider the root edge of $t$ and its right label $(x,y)$. We label the endpoint of the root edge (which is a leaf by Lemma~\ref{lem:left_leaf}) with $x$.


Next, we proceed to a Depth First Search of $t$ where one favors the rightmost edge at each step. The visit order of this DFS gives an ordering for the vertices: ${v_1},\dots,{v_{|t|}}$, and for the edges: $e_1, \dots, e_{|t| - 1}$. An example is shown in Figure~\ref{bfs}.

\fig{[width=0.3 \textwidth]{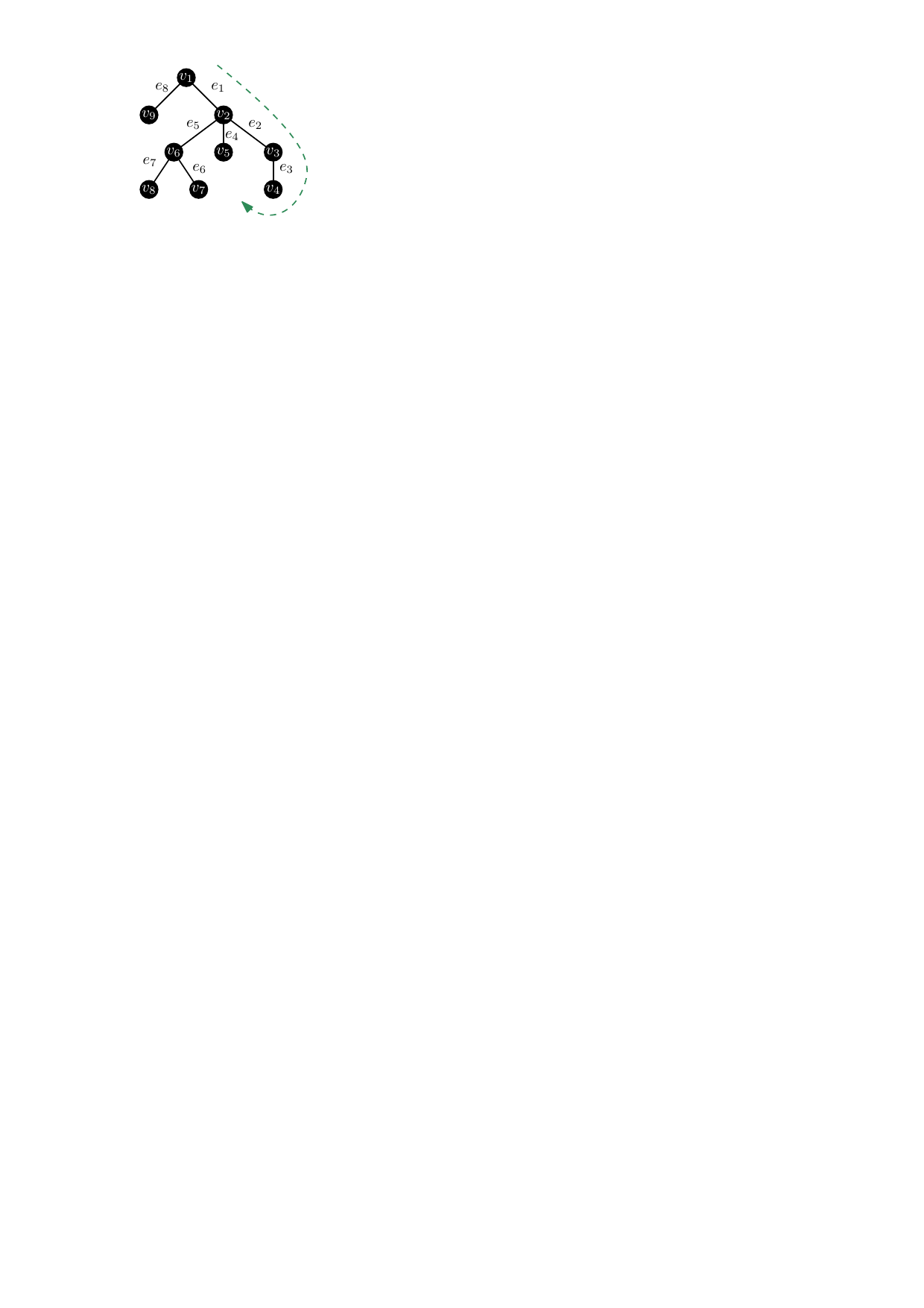}}{Example of a rightmost DFS of a tree.}{bfs}

Then, for $k$ between $1$ and $|t| - 1$, we label ${v_k}$ by the {end label} of $e_k$.
Vertex ${v_{|t|}}$ is the endpoint of the root edge of $t$, which is already labeled.

An example of this step is shown at Figure~\ref{step2}.

\fig{[scale = 1.2]{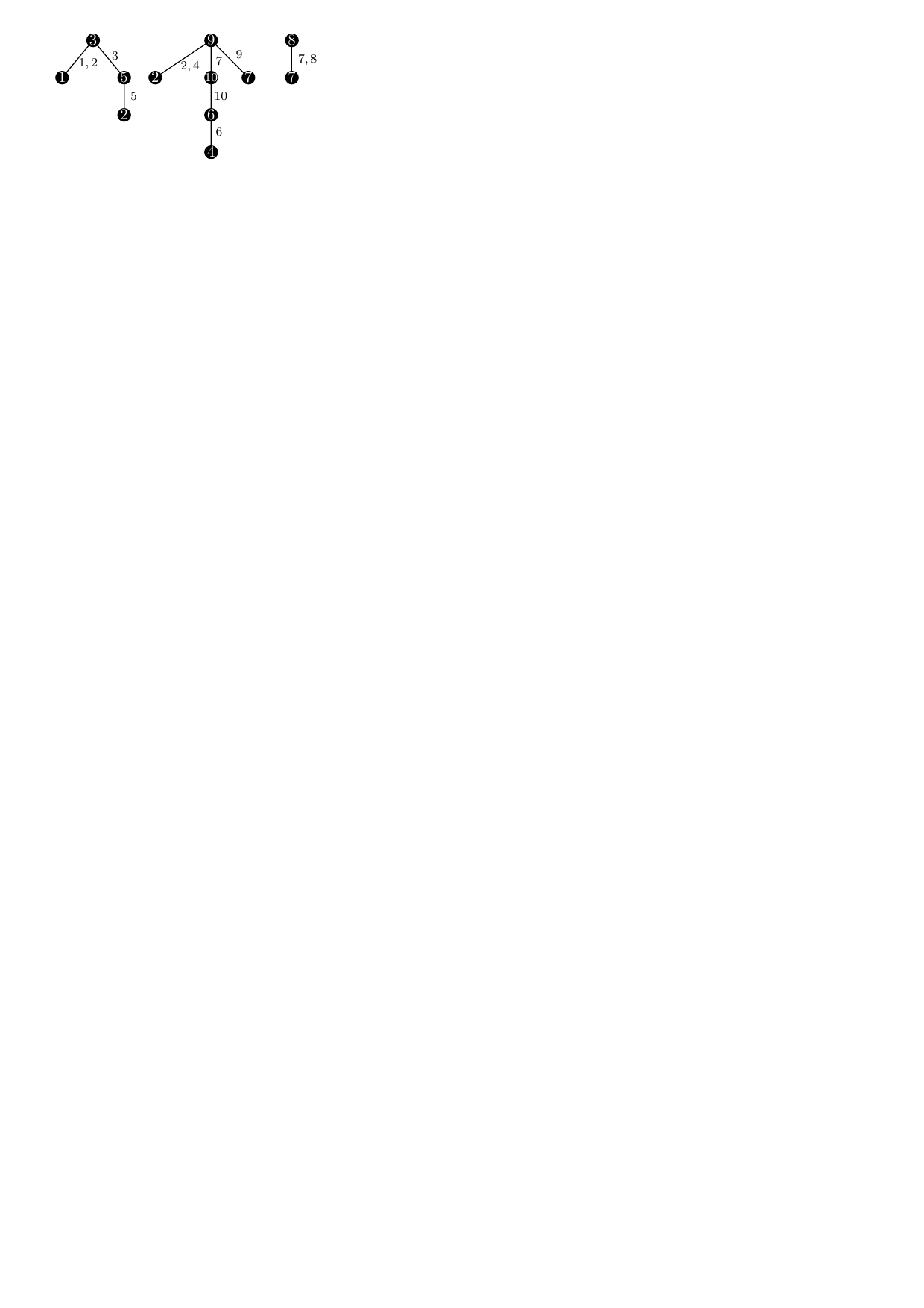}}{  Result of Step $2$ for the increasing 1,2-tree of Figure~\ref{ex12tree}.
We have kept the end label of all edges and the whole right label of the root edge to illustrate how they are used to label vertices.}{step2}

\paragraph{Step 3: merge the trees of $\boldsymbol F$.}

Remove any edge label from $F$ and forget the fact that $F$ is plane (i.e. ordered). Merge every vertex of $F$ that carries the same vertex. There results a Cayley tree, which we denote $\tau(G)$. Following our example, the result is depicted at Figure~\ref{excayley}.

\fig{[scale = 1.2]{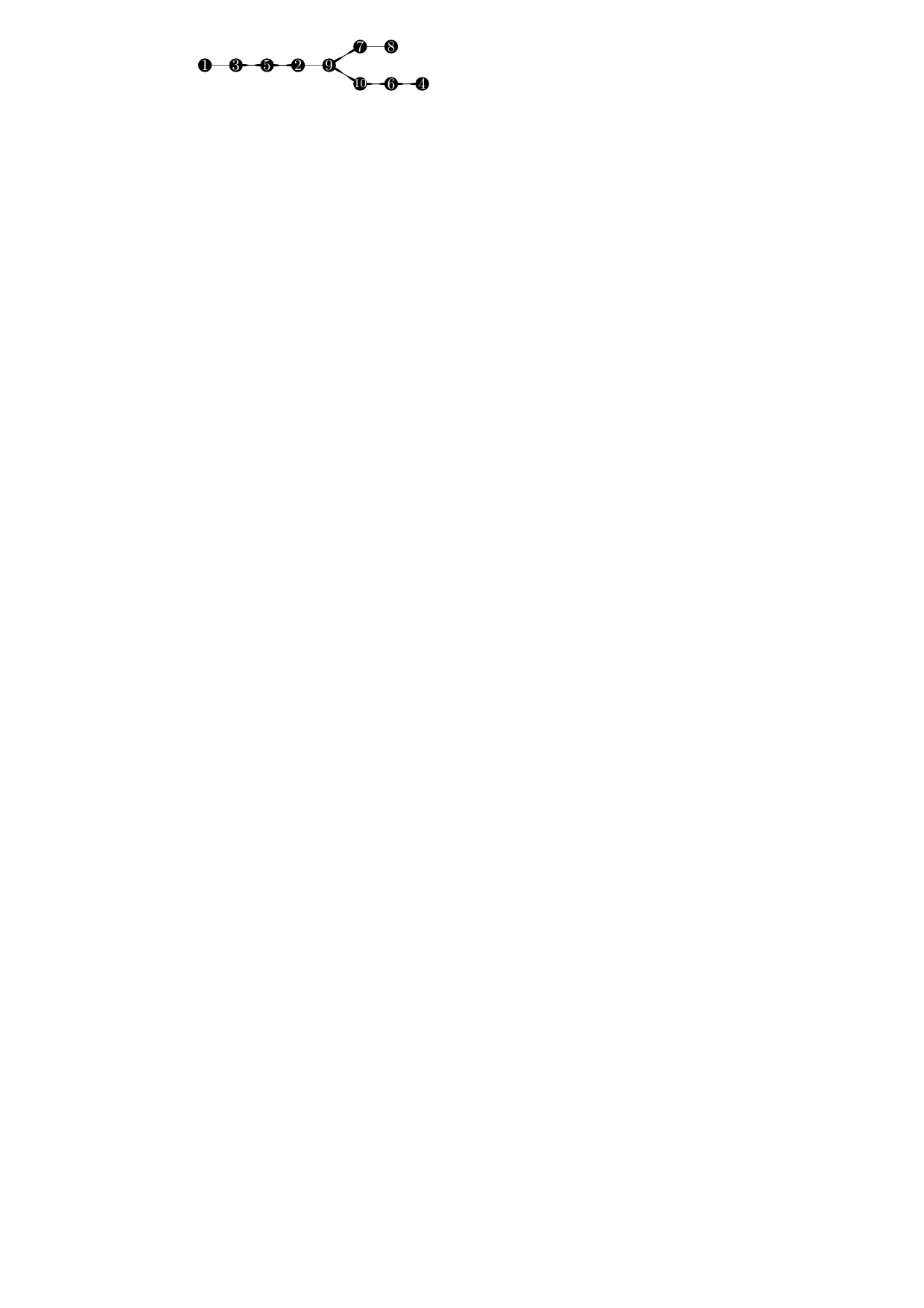}}{Cayley Tree in bijection with the 1-2 increasing tree of Figure~\ref{ex12tree}. }{excayley}

\subsection{Properties of $\boldsymbol \tau$}

We now prove several properties on the map $\tau$ defined in the previous subsection. These properties will be useful to describe the reciprocal of $\tau$. Incidentally we show the following proposition, which induces Theorem~\ref{theo:central} provided that $\tau$ is a bijection.

\begin{proposition}
The map $\tau$ sends increasing 1,2-trees with $n$ vertices and $m$ triangles to Cayley trees with $n$ vertices and $m$ twists. More specifically, $\tau$ 
 transforms each 2-connected component of an increasing 1,2-tree into a subtree where every edge but one is a twist.

In particular, increasing trees (i.e. increasing 1,2-trees where we only attach new vertices as leaves) are preserved by $\tau$, 
and increasing 2-trees (i.e. increasing 1,2-trees where we only attach new vertices as triangles) are sent to Cayley trees where every edge is a twist,
except for one which must be the only edge leaving $\vertex 1$.
\label{prop:tau}
\end{proposition}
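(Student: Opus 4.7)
The plan is to prove Proposition~\ref{prop:tau} by a block-by-block analysis of $\tau$ (calling \emph{block} a 2-connected component of $G$). The argument splits into four steps: (i) match the trees of $F$ with the blocks of $G$; (ii) identify $a = \min(B)$ of each block $B$ as the entry point of $B$ in $\tau(G)$; (iii) locate the unique increasing edge inside the $B$-subtree; and (iv) show that all remaining edges of the $B$-subtree are twists. Step (iv) will be the main obstacle.

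A straightforward induction on the construction of $G$ establishes the block--tree correspondence. Each Rule~1 application creates a new tree in $F$ and, simultaneously, a new single-edge block in $G$; each Rule~2 application extends the unique tree of $F$ containing the label $(x,y)$ being triangulated, and correspondingly extends --- or merges a single-edge block into --- the block of $G$ containing $(x,y)$. Consequently, trees of $F$ are in bijection with blocks of $G$: a tree $t$ with $k+1$ edges matches a block $B$ with $k+2$ vertices and $k$ triangles, and the labels living in $t$ are exactly the edges of $B$. A routine check using Step~2 shows that the vertex labels of $t$ are pairwise distinct and together form exactly the vertex set of $B$.

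Among those vertex labels, $a = \min(B)$ is distinguished as the only one already present in $G$ before $B$'s creation: both the Rule~1 child and every vertex introduced by a Rule~2 inside $B$ carry strictly larger labels. By Lemma~\ref{lem:left_leaf}, $a$ is the label of the leaf at the endpoint of the root edge of $t$; moreover, in $\tau(G)$ rooted at $\vertex 1$, $a$ is the root of $B$'s subtree, since it is the unique vertex of $B$ lying on the $\vertex 1$-to-$v$ path for every $v \in B$. The root edge of $t$ then maps to an edge of $\tau(G)$ between $a$ and the label of $v_1$ (the root of $t$): the subtree hanging below this edge consists of $B \setminus \{a\}$, whose labels are all $>a$, together with vertices of descendant blocks whose entry points themselves lie in $B \setminus \{a\}$ and therefore also $>a$. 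Its minimum is thus strictly greater than $a$, so this edge is increasing, giving the unique increasing edge of the $B$-subtree.

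The main obstacle is to verify that the remaining $k$ edges of the $B$-subtree are all twists. I would proceed by induction on the number of Rule~2 applications composing $B$, maintaining the invariant that for every non-root edge $e_i$ of $t$ --- whose end label equals the new vertex $v$ introduced by the Rule~2 that created $e_i$ attaching $v$ to some edge $(x,y)$ with $x < y < v$ --- the subtree of $t$ hanging below $e_i$ contains at least one label strictly smaller than $v$. Via Step~3, this invariant forces the corresponding edge of $\tau(G)$ to be a twist. The crux is a case analysis between Rule~2~Left and Rule~2~Right: both insert an edge into $t$ but also migrate pre-existing subtrees (those originally below $e$ in the Left case, those to the right of $e$ in the Right case), and one must track how these migrations interact with the DFS-based labeling of Step~2 so that a small-enough label ends up below each new edge. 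Once this is proved for each Rule~2, summing over blocks yields the bijection between triangles and twists, together with the specific consequences for trees and $2$-trees stated at the end of the proposition.
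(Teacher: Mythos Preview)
Your first three steps --- matching trees of $F$ with blocks of $G$, placing $a=\min(B)$ at the leaf end of the root edge via Lemma~\ref{lem:left_leaf}, and arguing that the root edge becomes the unique increasing edge of the block in $\tau(G)$ --- are exactly what the paper does. For step~(iv) the paper does not argue afresh: it simply invokes Lemma~\ref{lem:totally-twisted}, which was already proved using the \textsc{Eddy} machinery (Lemmas~\ref{lem:edyfinitif} and~\ref{lem:edyminue}).

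Your proposed alternative for~(iv) --- an induction on the Rule~2 applications building $t$, carrying an invariant about \emph{vertex} labels in the subtree below each non-root edge --- has a timing problem. Vertex labels are assigned only in Step~2, by a rightmost DFS on the \emph{final} forest; during the Step~1 construction they do not yet exist, and each later Rule~2 insertion reshuffles the DFS visit order and hence the eventual labels of many pre-existing vertices. So the invariant you want to maintain is not stable across your inductive step, and the ``case analysis between Rule~2~Left and Rule~2~Right'' you allude to cannot be carried out locally as stated. The paper's way around this is to express the key inequality purely in terms of edge \emph{end labels} (Lemma~\ref{lem:edyminue}), which \emph{are} stable under further insertions (Lemma~\ref{lem:edyfinitif}), and to convert to vertex labels only once the forest is complete. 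If you insist on an induction over Rule~2 applications, you would have to rephrase your invariant in those terms --- at which point you are essentially redoing the \textsc{Eddy} argument. Since Lemma~\ref{lem:totally-twisted} is already available to you, the cleanest fix is simply to cite it in place of your step~(iv).
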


We need beforehand the following definition.

\begin{definition}[\textsc{Eddy}\footnote{The name is a homage to the famous French singer Eddy Mitchell \url{https://en.wikipedia.org/wiki/Eddy_Mitchell}.}]
Let $\vertex v$ be a node of a plane forest, not incident to the root edge of a tree. Consider $\vertex a$ the youngest ascendant of $\vertex v$ ($\vertex v$ included) having left siblings. We define the \textsc{Eddy} of $\vertex v$, denoted by $\eddy{\vertex v}$, as the edge linking the sibling just on the left of $\vertex a$ to its parent.
By extension, for an edge $\vertex u \vertex v$ where $\vertex v$ is the child of $\vertex u$, $\eddy{\vertex u \vertex v}$ denotes $\eddy{\vertex v}$.
\end{definition}

\fig{[scale = 1]{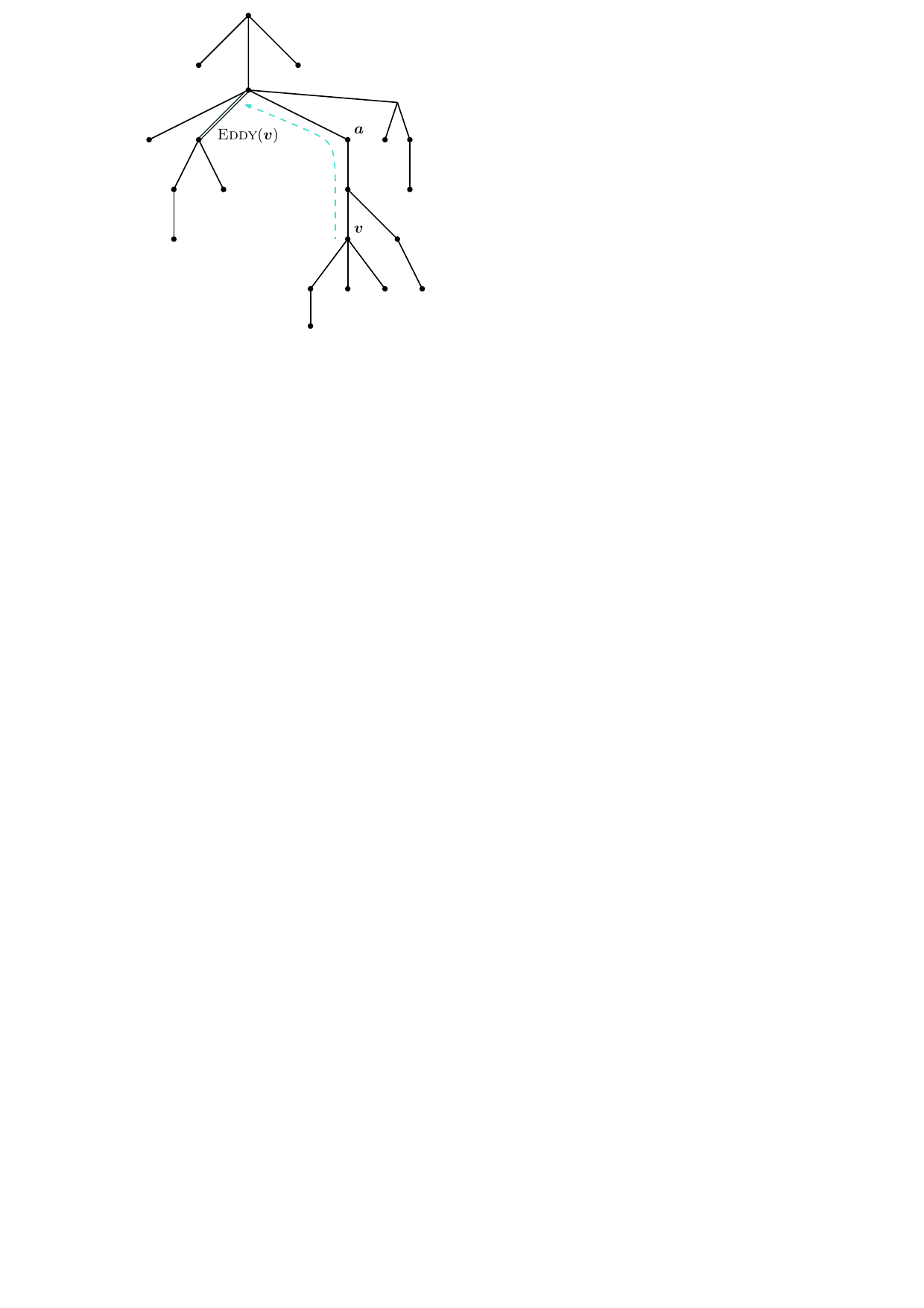}}{Example of $\eddy{\vertex v}$, given a vertex $\vertex v$ from a tree}{eddy}

This definition is illustrated by Figure~\ref{eddy}. Note that all the vertices in the leftmost branch starting at $a$ share the same $\textsc{Eddy}$.
Note also that during a rightmost DFS, the edge visited immediately after visiting
any edge $e$ leading to a leaf is $\eddy{e}$.

By a careful analysis of the rules of Step~1, we infer the following observation.

\begin{lemma}\label{lem:edyfinitif}
 For every non-root edge $e$ in the forest of Step~1,
the labels of $\eddy{e}$ do not vary during the whole construction of the forest.
\end{lemma}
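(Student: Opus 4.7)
The plan is to proceed by induction on the number of construction steps performed after the creation of the edge $e$. A non-root edge $e$ can only be born from Rule 2 Left or Rule 2 Right, and at birth $\eddy{e}$ takes some well-defined value. For the inductive step, a single subsequent application of Rule 1 simply appends a disconnected new tree to $F$ and cannot alter $\eddy{e}$, so the real work lies in Rule 2 Left and Rule 2 Right. The key structural fact I would isolate at the outset is that neither rule ever introduces a new left sibling to an existing vertex, and that whenever a subtree is relocated, its top edge carries its labels along unchanged.

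For Rule 2 Left applied to some edge $\tilde e = \vertex u \vertex v$, a new vertex $\vertex w$ is interposed between $\vertex u$ and $\vertex v$: $\vertex w$ takes the horizontal slot formerly occupied by $\vertex v$ among the children of $\vertex u$ (hence inheriting $\vertex v$'s siblings), while $\vertex v$ becomes the sole child of $\vertex w$. For any existing edge $e$ whose lower endpoint is a descendant of $\vertex v$, the youngest ascendant with a left sibling either lies strictly below $\vertex v$ (and is unaffected), is $\vertex v$ (which is now replaced by $\vertex w$, sharing the same immediate left sibling and the same edge to $\vertex u$), or lies above $\vertex v$ (in which case nothing changes further up); in each case the labels of $\eddy{e}$ are preserved. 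The subcase $e = \tilde e$ is handled similarly, using that $\tilde e$ keeps its labels and the same left siblings as before.

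For Rule 2 Right applied to $\tilde e = \vertex u \vertex v$, a new vertex $\vertex w$ is placed as the immediate right sibling of $\vertex v$, and the subtrees formerly rooted at the right siblings $\vertex{v_1}, \dots, \vertex{v_k}$ of $\vertex v$ become children of $\vertex w$ with the labels of their top edges preserved. For an existing edge $e$ whose lower endpoint is a descendant of some $\vertex{v_i}$, the youngest ascendant with a left sibling either lies strictly below $\vertex{v_i}$ (unchanged), equals $\vertex{v_i}$ with $i \geq 2$ (whose immediate left sibling remains $\vertex{v_{i-1}}$ and whose connecting edge keeps its labels), or was $\vertex{v_1}$ and becomes $\vertex w$ after the rule. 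This last subcase is the main obstacle: before the rule, $\eddy{e}$ was the edge from $\vertex{v_1}$'s immediate left sibling $\vertex v$ to $\vertex u$, namely $\tilde e$; after the rule, $\eddy{e}$ is the edge from $\vertex w$'s immediate left sibling $\vertex v$ to $\vertex u$, which is still $\tilde e$ with its original labels. This closes the induction.
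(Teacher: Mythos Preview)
Your proof is correct and hinges on the same structural observation as the paper's: no rule ever inserts a new left sibling to an existing vertex, so the youngest ascendant with a left sibling either stays the same or moves up by one when that ascendant is pushed under a freshly created parent. The paper's argument is slightly more compact because it treats all three rules uniformly (arguing once that contracting the new edge $f$ reverts the forest, hence the new parent of $\vertex a$ inherits $\vertex a$'s former left sibling) rather than rule by rule, but the substance is identical; the only case you leave implicit—an edge $e$ outside the subtree of $\vertex v$ with $\eddy{e}=\tilde e$ when Rule~2 Left is applied to $\tilde e$—is immediately handled by your stated key fact, since the upper half of the split keeps $\tilde e$'s labels and its position among siblings.
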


 \begin{proof}

 Let $e = \vertex u \vertex v$ be a non-root edge of the forest and let $f$ be an edge added to the forest after using any rule from Step~1. We show that the insertion of $f$ does not change the labels of $\eddy{e}$.
 
 Let $\vertex a$ be the youngest ascendant of $\vertex v$ with left siblings before insertion of $f$.
 In any of the three rules, $f$ has no right sibling. 
 Thus the insertion of $f$ does not add a left sibling to an existing vertex. Therefore the youngest ascendant of $\vertex v$ with left siblings after insertion of $f$ is still $\vertex a$, or an ascendant of $\vertex a$.

%
%
%

If it is still $\vertex a$, then the edge on the left of $\vertex a$ is the same before and after the insertion of $f$ (because $f$ cannot be a left sibling of another edge). So $\eddy e$ remains the same.

If the youngest ascendant of $\vertex v$ with left siblings is not $\vertex a$ anymore, this means that $\vertex a$ has lost its left siblings. So $f$ must be the new edge between $\vertex a$ and its parent. Since the insertion of $f$ can be reverted by contracting $f$, the new parent of $\vertex a$ must have the same left sibling than $\vertex a$ before insertion of $f$. So the youngest ascendant of $\vertex v$ with left siblings is now the parent of $\vertex a$, and $\eddy e$ is still the same edge. 
 \end{proof}

From this, we can deduce the following.

\begin{lemma}\label{lem:edyminue}
 For every non-root edge $e$ in the forest of Step~1, the end label of $\eddy e$ is less than the end label of $e$.
\end{lemma}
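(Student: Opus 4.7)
The plan is to reduce the statement to checking it at the very moment $e$ is inserted into the forest, and then appeal to the invariance Lemma~\ref{lem:edyfinitif}. The crucial observation is that each substep of Step~1 inserts exactly one new edge, and that edge receives end label equal to the label $v$ of the vertex currently being processed. Consequently, at the instant just before $e$ is created, every edge already present in $F$ has end label strictly less than $v$, so it suffices to show that at that moment $\eddy{e}$ coincides with one of these pre-existing edges.

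Since $e$ is non-root, it was produced by Rule 2 Left or Rule 2 Right, as Rule 1 only creates root edges of new trees. In the Rule 2 Right case, $e$ is inserted as the immediate right sibling of some existing edge $e_{\text{old}}$; therefore the bottom endpoint of $e$ has the bottom of $e_{\text{old}}$ as its immediate left sibling, the youngest ascendant with a left sibling is already the bottom of $e$ itself, and $\eddy{e}=e_{\text{old}}$, which is pre-existing. In the Rule 2 Left case, $e$ is inserted just below $e_{\text{old}}$ and the bottom endpoint of $e$ has no left sibling, so to compute $\eddy{e}$ one must ascend the chain of ancestors until reaching the first one with a left sibling. Since Rule 2 Left does not endow any newly introduced vertex with left siblings, this chain enters the pre-existing structure immediately, producing an already-present edge as $\eddy{e}$.

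In both subcases, $\eddy{e}$ at the moment of $e$'s creation is an edge that was already in the forest, and hence has end label $<v$. Applying Lemma~\ref{lem:edyfinitif} freezes the labels of $\eddy{e}$ for the remainder of the construction, so the end label of $\eddy{e}$ at the end of Step~1 is still $<v$. The main subtlety is the Rule 2 Left case, where one must track the ancestor chain through a locally modified portion of the forest; the invariant that insertions never give left siblings to existing vertices (used already in the proof of Lemma~\ref{lem:edyfinitif}) is precisely what makes the argument go through.
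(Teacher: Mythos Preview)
Your argument is correct and follows exactly the paper's line: observe that $\eddy{e}$ is already present in the forest at the instant $e$ is inserted (hence has strictly smaller end label, since end labels are assigned in increasing order), and then invoke Lemma~\ref{lem:edyfinitif} to freeze those labels for the rest of the construction. Your case analysis for Rule~2 merely spells out what the paper asserts in one sentence; the only slip is that in Rule~2~Left the newly created middle vertex \emph{can} inherit left siblings (it takes over the former child's position among its parent's children), but this only makes $\eddy{e}$ appear one step earlier as a pre-existing edge, so the conclusion is unaffected.
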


\begin{proof}
 For an edge $e$, $\eddy e$ was already in the forest when $e$ is introduced.  So the end label of  $\eddy e$ is necessarily less than the end label of $e$ (since the edges are introduced in increasing order of their end label). The property remains true by Lemma~\ref{lem:edyfinitif}.
\end{proof}

\begin{lemma}\label{lem:increasing-tout}
Given a vertex $\vertex u$ of the forest  of Step~1 with children $\vertex{v_1},\dots,\vertex{v_k}$, the end labels of all edges in the subtrees rooted at $v_{i+1}, \dots, v_k$ are greater than the end label of the edge $uv_i$.
Moreover, the end labels of the edges going from $u$ to its children are increasingly ordered from left to right.
\end{lemma}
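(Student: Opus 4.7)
The plan is to prove both statements simultaneously by induction on the number of edges in the forest during the execution of Step~1, exploiting the key fact that the end label of an edge equals the index $v$ of the vertex $\vertex v$ whose processing created it. Consequently end labels reflect insertion order, and each freshly inserted edge carries the strictly largest end label in $F$ at that moment.

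The base case of an empty forest is vacuous. For the inductive step, consider inserting an edge $e'$ with end label $v$ via one of the three rules. Rule~1 starts a fresh tree as a single edge, so both properties hold trivially at the new root. In Rule~2 Left, the old edge $e$ is split by a new vertex $m$ into $e$ (above) and $e'$ (below); the children list of the parent $p$ of $e$ is unchanged up to replacing the former lower endpoint of $e$ by $m$, and the edge leading to that position keeps the same end label, preserving property~(2) at $p$. Property~(1) at $p$ only gains the end label $v$ inside the subtree rooted at $m$, and being the new maximum it cannot violate anything. At $m$ itself there is a single child, so both properties hold trivially, and the children of the former lower endpoint of $e$ are untouched.

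The main case is Rule~2 Right: $e'$ is inserted as the immediate right sibling of $e$ at their common parent $p$, and the old right-siblings of $e$ migrate in the same relative order to become the children of the lower endpoint $m'$ of $e'$, each via an edge whose end label is preserved. For property~(2) at $p$, the children on the left of $e$ and $e$ itself keep their end labels, and the new edge $pm'$ carries the new maximum $v$ placed rightmost, extending the increasing order. For property~(1) at $p$, the end labels appearing in the subtree rooted at $m'$ are exactly those that previously sat in the right-of-$e$ subtrees of $p$, and these by induction already dominated every end label of a $pc_i$ to their left. At $m'$ itself, both properties are inherited verbatim from the old right-of-$e$ segment of $p$'s children list, since this is now exactly $m'$'s children list with identical edge end labels.

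No other part of $F$ is modified by any rule, so the inductive hypothesis persists everywhere else and the induction closes. The main obstacle is only being precise about which children list each rule actually modifies, in particular the relocation of the right-of-$e$ subtrees in Rule~2 Right; once this bookkeeping is laid out, the two properties follow from the single observation that $v$ is always inserted as the new maximum end label.
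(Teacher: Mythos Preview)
Your induction on the construction is correct and complete; the key observation that the newly inserted edge always carries the strict maximum end label is exactly what drives both properties through each rule. One small slip: in the Rule~2~Right paragraph you write ``dominated every end label of a $pc_i$ to their left'', where you presumably mean the edges $p a_i$ (and $pb$) sitting to the \emph{left} of $e$, not the $pc_i$ on the right. Also, when you say the end labels in the subtree at $m'$ are ``exactly those that previously sat in the right-of-$e$ subtrees of $p$'', note that the edges $m'c_j$ carry the end labels of the former edges $pc_j$, which were \emph{not} in the subtrees rooted at the $c_j$; their domination of the left edges comes from part~(2) of the induction hypothesis, not part~(1). Once this is said explicitly the argument is airtight.

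The paper's own proof is entirely different and much shorter: it simply invokes Lemma~\ref{lem:edyminue} repeatedly. For any edge $f$ in a subtree rooted at some $v_j$ with $j>i$, iterating $\textsc{Eddy}$ from $f$ walks left and up and eventually lands on $uv_i$, and each step strictly decreases the end label; the second claim is the special case $\textsc{Eddy}(uv_{j})=uv_{j-1}$. So the paper packages the whole lemma into the single monotonicity statement about $\textsc{Eddy}$, which it had already proved for other purposes. Your approach trades that machinery for a direct, self-contained case analysis of the three construction rules: more elementary and independent of the $\textsc{Eddy}$ notion, at the cost of some bookkeeping. Either route is perfectly legitimate here.
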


\begin{proof}
The property directly comes from one or several uses of Lemma~\ref{lem:edyminue}.
\end{proof}

In the following, we use the natural extension of the definition of twists (Definition~\ref{def:twist}) to rooted subtrees of Cayley trees (i.e. rooted trees whose set of labels is not necessarily $\{1,\dots,n\}$).

\begin{lemma} At the end of Step 2, every edge of every tree is a twist.
\label{lem:totally-twisted}
\end{lemma}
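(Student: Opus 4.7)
The plan is to prove the slightly stronger fact that for every edge $e = uv$ in a tree $t$ of $F$ (with $u$ the parent and $v$ the child), the leftmost leaf $\ell$ of the subtree $S_v$ has label strictly smaller than that of $u$; this immediately gives $\min(S_v) \le \mathrm{label}(\ell) < \mathrm{label}(u)$ and hence that $e$ is a twist. First I would pin down $\mathrm{label}(u)$: since $v$ is a child of $u$, the vertex $u$ is internal, and the rightmost DFS traverses the edge from $u$ to its rightmost child $w^*$ immediately after visiting $u$; by the labeling rule of Step~2, $\mathrm{label}(u)$ equals the end label of $uw^*$.

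Next I would split on whether $e$ is the root edge of $t$. If so, then $v = v_{|t|}$ is the leftmost leaf of $t$, and by Lemma~\ref{lem:left_leaf} its label is the smallest integer $a$ appearing in $t$. Because $a$ was already present in $G$ before $t$ was created via Rule~1, no edge of $t$ has end label $a$; hence $\mathrm{label}(u)$, being an end label in $t$, is strictly greater than $a$, and we conclude $\min(S_v) \le a < \mathrm{label}(u)$.

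If $e$ is a non-root edge, then $v$ does not lie on the (length-one) leftmost path of $t$, so $\ell \neq v_{|t|}$; hence $\mathrm{label}(\ell)$ equals the end label of the next edge $e_m$, with $m = j + |S_v| - 1$ (where $v = v_j$), traversed by the DFS after $\ell$. Using the observation that the edge visited just after an edge leading to a leaf is its Eddy, combined with the fact that no vertex strictly between $\ell$ and $v$ along the leftmost path of $S_v$ has left siblings, I would identify $e_m$ with $\eddy{uv}$. Lemma~\ref{lem:edyminue} then gives the end label of $\eddy{uv}$ strictly less than the end label of $uv$, and Lemma~\ref{lem:increasing-tout} applied to $u$ gives the end label of $uv$ at most the end label of $uw^* = \mathrm{label}(u)$. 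Chaining yields $\mathrm{label}(\ell) < \mathrm{label}(u)$.

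The main delicate point, I expect, will be the identification $e_m = \eddy{uv}$: it requires carefully tracking the DFS to confirm that the last vertex visited within $S_v$ is indeed $\ell$, and then observing that the Eddies of $\ell$ and $v$ coincide because the leftmost chain from $v$ down to $\ell$ introduces no new left-sibling ancestors above what $v$ itself already contributes.
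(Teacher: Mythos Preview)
Your proposal is correct and follows essentially the same route as the paper: both arguments pin down $\mathrm{label}(u)$ as the end label of the edge to $u$'s rightmost child, handle the root-edge case via Lemma~\ref{lem:left_leaf}, and for non-root edges compare $\mathrm{label}(u)$ with the label of the leftmost leaf $\ell$ below $v$ through the chain $\text{end label}(uw^*)\ge \text{end label}(uv)>\text{end label}(\eddy{uv})$ from Lemmas~\ref{lem:increasing-tout} and~\ref{lem:edyminue}, together with the identification $\eddy{\ell}=\eddy{uv}$. The paper packages your ``$e_m=\eddy{uv}$'' step as two separate observations (its ``point~1'' that $\mathrm{label}(\ell)=\text{end label}(\eddy{\ell})$, and the equality $\eddy{e}=\eddy{\ell}$), but the content is identical.
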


\begin{proof} Before commencing the proof, remark the two following points:

1. By construction (application of Step 2), the label of a leaf $\ell$ is the end label of $\eddy{\ell}$.

2. Also by construction, a non-leaf vertex inherits the end label of the edge towards its rightmost child.

Now consider an edge $e = \vertex u \vertex v$ (where $\vertex v$ is the child of $\vertex u$). 

If $e$ is the root edge of its tree, then $\vertex v$ is a leaf (Lemma~\ref{lem:left_leaf}) which inherits (by construction) the smallest label of the tree. Therefore $e$ is a twist.

Now let us assume that $e$ is not a root edge, hence $\eddy e$ is defined. Let $\vertex \ell$ be the leftmost leaf descendant of $\vertex v$. We are going to prove that the label of $\vertex u$ is greater than the label of $\vertex \ell$.
To do so, we introduce $v'$, the rightmost child of $\vertex u$ (which may or may not be $v$).

We have the chain of inequalities:
\begin{align*}
 u & = \text{end label}(uv') & \text{(by point 2)} \\
  & \geq \text{end label}(e) & \text{(by Lemma~\ref{lem:increasing-tout})} \\
  & > \text{end label}(\eddy e) & \text{(by Lemma~\ref{lem:edyminue})} \\
  & = \text{end label}(\eddy \ell) & \text{(since $\eddy e = \eddy \ell$ by definition of $\ell$)} \\
  & = \ell & \text{(by point 1).} 
\end{align*}
This implies that $e$ is a twist.
\end{proof}

\begin{lemma} \label{lem:smallest}
  Let $T$ be a tree of $F$ and $S$ be any subtree of $T$. At the end of Step 2, the vertex with the smallest label in~$S$ is the leftmost leaf of $S$.
  \end{lemma}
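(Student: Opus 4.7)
The plan is to handle three cases according to the root $r$ of $S$. If $|S|=1$ the statement is trivial, and if $S=T$ it is immediate from Lemma~\ref{lem:left_leaf} together with the Step~2 labelling of the root edge's endpoint. So I would focus on the remaining case: $S$ rooted at a non-root vertex $r$ of $T$ with children $v_1,\dots,v_k$ (left to right) and $|S|\ge 2$. Denote by $\ell$ the leftmost leaf of $S$. A key first observation is that the path from $\ell$ up to $r$ consists of successive leftmost children, so $\ell$'s youngest ancestor with left siblings coincides with $r$'s (including $r$ itself), yielding the identity $\eddy{\ell}=\eddy{v_1}=\eddy{r}$. Moreover, this edge lies \emph{outside} $S$.

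Next, I would reuse the two labelling rules made explicit at the start of the proof of Lemma~\ref{lem:totally-twisted}: an internal vertex is labelled by the end label of its rightmost child edge, and a leaf other than the endpoint of the root edge is labelled by the end label of its \textsc{Eddy}. For each $v\in S\setminus\{\ell\}$ I would show that the edge whose end label equals the label of $v$ lies \emph{inside} $S$: this is trivial for internal $v$, and for a leaf $v\neq\ell$ it follows because $v$ is not on the leftmost path of $S$, which forces the youngest ancestor of $v$ with left siblings to be a strict descendant of $r$. Thus the labels appearing in $S\setminus\{\ell\}$ are end labels of edges of $S$, whereas the label of $\ell$ is the end label of the external edge $\eddy{r}$.

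The technical core of the argument would then be the estimate
\[
  \text{end label of } \eddy{r} \;<\; \text{end label of } e \quad \text{for every edge } e \text{ of } S,
\]
which I would prove by induction on $|S|$. The inductive step bundles three ingredients: (i) Lemma~\ref{lem:edyminue} applied to $rv_1$, which, combined with the identity $\eddy{rv_1}=\eddy{v_1}=\eddy{r}$, gives the bound on $rv_1$; (ii) Lemma~\ref{lem:increasing-tout} at $r$, propagating the bound to the edges $rv_2,\dots,rv_k$ and to every edge in the subtrees rooted at $v_2,\dots,v_k$; (iii) the induction hypothesis applied to the subtree rooted at $v_1$, once more invoking $\eddy{v_1}=\eddy{r}$ to keep the comparison point equal to $\eddy{r}$, which takes care of every edge inside the leftmost subtree.

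The hard part, I expect, is exactly this recursive call in (iii): the induction naturally produces a bound involving $\eddy{v_1}$, and the whole argument only closes because that \textsc{Eddy} coincides with $\eddy{r}$. Once the estimate is in hand, combining it with the label analysis of the second paragraph immediately gives that the label of $\ell$ is strictly less than the label of every other $v\in S$, proving that $\ell$ carries the smallest label in $S$.
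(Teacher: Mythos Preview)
Your proof is correct and follows the same skeleton as the paper's: identify the label of $\ell$ as the end label of the external edge $\eddy{r}=\eddy{\ell}$, observe that every other vertex of $S$ inherits the end label of some edge lying inside $S$, and then compare. The one difference is that your induction on $|S|$ for the key estimate is avoidable: the paper gets it in a single stroke by applying Lemma~\ref{lem:increasing-tout} at the parent $p$ of the vertex $a$ that defines $\eddy{r}$, which immediately gives that every edge in the subtree rooted at $a$---and hence every edge of $S$---has end label strictly greater than that of $\eddy{r}$.
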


  \begin{proof} 

  Let $\vertex v$ be the root of $S$, and $\vertex \ell$ be the leftmost leaf of $S$.

  If  $\vertex v$ is also the root of $T$, then the lemma obviously holds by Lemma~\ref{lem:left_leaf} ($\vertex \ell$ is the rightmost child of $\vertex v$ and carries the smallest vertex of $T$).
  
  So we can suppose that $\vertex v$ has a parent, say $\vertex u$. As mentioned by point 1 of the previous proof, $\ell$ is the end label of $\eddy{\ell}$, which is the same as $\eddy{\vertex u \vertex v}$. By Lemma~\ref{lem:increasing-tout}, the end label of $\eddy{\vertex u \vertex v}$ must be smaller than the end label of any edge of $S$. So $\ell$ is smaller than the end label of any edge of $S$.

  Let $\vertex x$ be a vertex of $S$ different from $\vertex \ell$. 
   Since the vertex labels are induced by a rightmost DFS, the label of $\vertex x$ is necessarily an end label of an edge of $S$. According to what we said above, the label of $\vertex x$ is greater than the label of  $\vertex \ell$.
  \end{proof}

\begin{lemma} \label{lem:ordering}
  Let $\vertex u$ be a vertex of $F$. At the end of Step 2, the subtrees rooted in $\vertex u$ are sorted from left to right with respect to their minimum.
  \end{lemma}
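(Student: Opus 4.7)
The plan is to combine Lemma~\ref{lem:smallest}, Lemma~\ref{lem:increasing-tout}, and Lemma~\ref{lem:edyminue} with a direct computation of the $\eddy$ operator on leftmost leaves. Denote by $\vertex{v_1}, \dots, \vertex{v_k}$ the children of $\vertex u$ ordered from left to right, and by $S_i$ the subtree rooted at $\vertex{v_i}$. The goal is to establish $\min(S_1) < \min(S_2) < \dots < \min(S_k)$.

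First, by Lemma~\ref{lem:smallest}, $\min(S_i)$ is the label of the leftmost leaf $\ell_i$ of $S_i$, and by point~1 in the proof of Lemma~\ref{lem:totally-twisted}, this label is the end label of $\eddy{\ell_i}$. For $i \geq 2$, I would observe that climbing from $\ell_i$ upwards each ancestor visited (up to and including $\vertex{v_i}$) is a leftmost child of its parent, so the first ancestor of $\ell_i$ with a left sibling is $\vertex{v_i}$ itself, whose left sibling is precisely $\vertex{v_{i-1}}$. Hence $\eddy{\ell_i} = \vertex u \vertex{v_{i-1}}$ and therefore $\min(S_i) = \text{end label}(\vertex u \vertex{v_{i-1}})$. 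The chain $\min(S_2) < \min(S_3) < \dots < \min(S_k)$ then follows immediately from the second statement of Lemma~\ref{lem:increasing-tout}, which asserts that the end labels of the edges $\vertex u \vertex{v_1}, \dots, \vertex u \vertex{v_k}$ are increasing from left to right.

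The only remaining inequality is $\min(S_1) < \min(S_2)$, which I anticipate to be the main (though modest) obstacle: $\ell_1$ lies entirely to the left of $\vertex u \vertex{v_1}$, so its label is not directly determined by the edges from $\vertex u$ to its children. I would handle it by splitting into two cases. If $\vertex u$ is not the root of its tree in $F$, then $\vertex u \vertex{v_1}$ is a non-root edge, and since $\vertex{v_1}$ has no left sibling, the upward climb defining $\eddy{\ell_1}$ passes through $\vertex{v_1}$ without stopping and coincides with $\eddy{\vertex u \vertex{v_1}}$. Lemma~\ref{lem:edyminue} then gives
\[
\min(S_1) \;=\; \text{end label}(\eddy{\vertex u \vertex{v_1}}) \;<\; \text{end label}(\vertex u \vertex{v_1}) \;=\; \min(S_2).
\]
If instead $\vertex u$ is the root of its tree, then $\vertex u \vertex{v_1}$ is the root edge; by Lemma~\ref{lem:left_leaf}, $\vertex{v_1}$ is a leaf and its label is the smallest label $a$ of the whole tree. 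Since $\min(S_2)$ is another label of that same tree, we get $\min(S_1) = a < \min(S_2)$, which closes the proof.
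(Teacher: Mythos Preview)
Your proof is correct and follows essentially the same route as the paper's: both identify $\min(S_i)$ for $i\ge 2$ as the end label of $\vertex u\vertex{v_{i-1}}$ via Lemma~\ref{lem:smallest} and the $\eddy$ computation, then invoke Lemma~\ref{lem:increasing-tout}. The only difference is in handling $\min(S_1)$: the paper avoids your case split by applying Lemma~\ref{lem:smallest} directly to the full subtree rooted at $\vertex u$, so that $\min(S_1)$ equals the minimum of that whole subtree and is automatically smaller than every $\min(S_j)$ for $j>1$; your two-case argument (root vs.\ non-root, using Lemma~\ref{lem:edyminue} or Lemma~\ref{lem:left_leaf}) reaches the same conclusion but with a little more machinery.
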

  
  \begin{proof}
    Let $\vertex{v_1},\dots,\vertex{v_k}$ be the children of $\vertex u$, in this order, and $T_1,\dots,T_k$ the corresponding subtrees. Let us fix $i$ and $j$ such that $ 1 \leq i < j \leq k$.
    
    If $i = 1$, then by Lemma~\ref{lem:smallest}, the minimum of $T_1$ is the leftmost leaf of $T_1$, which is also the leftmost leaf of the subtree rooted at $\vertex u$, and so the minimum of the whole subtree. 
    In particular, the minimum of $T_1$ is smaller than the minimum of $T_j$.

    If $i > 1$, then the minimum of $T_i$ and the minimum of $T_j$ are given by the leftmost leaves of $T_i$ and $T_j$ (again by Lemma~\ref{lem:smallest}).
    But these leaves are labeled with the end labels of $\vertex {u v_{i-1}}$ and $\vertex {u v_{j-1}}$, respectively. 
    By Lemma~\ref{lem:increasing-tout}, the former is smaller than the latter, which concludes the proof.  \end{proof}

We can now show the proposition stated at the beginning of this subsection.

\begin{proof}[Proof of Proposition~\ref{prop:tau}.] Let $G$ be an increasing 1,2-tree. 
We can recover $\tau(G)$ by merging the trees of the forest at the end of Step~2 in a specific order. 
By iterating over all trees $t$ of $F$ with respect to their minimal label, we amalgamate $t$ with the previously joined trees by merging the smallest vertex of $t$ with the other vertex with the same label. By construction and by  Lemma~\ref{lem:left_leaf}, the smallest vertex of $t$ is a leaf incident to the root edge of $t$. So if $e$ is the root edge of $t$, then $e$ is increasing in $\tau(G)$. By Lemma~\ref{lem:totally-twisted}, all other edges of $t$ are twists.

Thus the final number of twists in $\tau(G)$ is equal to the number of non-root edges at the end of Step~1, which is also the number of times where Rule 2 has been used, and so the number of triangles in $G$.

To prove the more specific statement of the proposition, note that  the $2$-connected components of $G$ are in correspondence with trees of $F$. Indeed, all the edges of a $2-$connected component appear as left and right labels of the corresponding tree. This can be proved by induction.
Since all the edges of a tree $t$ of $F$ are twists at the end of Step 2 (by Lemma~\ref{lem:totally-twisted}), $t$ forms at the end of Step~3 a subtree of $\tau(G)$ where every edge is a twist, except the root edge.
\end{proof}

\subsection{From Cayley trees to increasing 1,2-trees}
\label{ss:from_cayley_to_increasing}

In this subsection, we describe how to recover the increasing 1,2-tree $\tau^{-1}(T)$, given a Cayley tree $T$. The process will be done by reversing the 3 steps from Subsection~\ref{ss:from-12-to-cayley}. Remember that $T$ is rooted at $\vertex 1$.

\paragraph{Reversed Step 3: cutting $T$ into a forest and order the children at each node.} This step is illustrated by Figure~\ref{reversed_step3}.

For each increasing edge $\vertex u \vertex v$ of $T$ where $\vertex v$ is the child of $\vertex u$,
split $T$ in two  by detaching edge $\vertex u \vertex v$  from $\vertex u$
(vertex $\vertex u$ is duplicated).
We root the new tree at vertex $\vertex v$.
Since $\vertex 1$ is only linked to the rest of the tree by increasing edges, the process must induce a tree with a single vertex $\vertex 1$. Discard this tree.

Once done, order the children at each node so that the subtrees are ordered with respect to their minimal vertex (see Definition~\ref{def:min}).
Note that the leftmost child of the root at each tree is a leaf: since we have cut the tree at each increasing edge $\vertex u \vertex v$, the minimal vertex of the subtree is $\vertex u$, which must be the leftmost child of the root after reordering the children.

\fig{[width = 0.75 \textwidth]{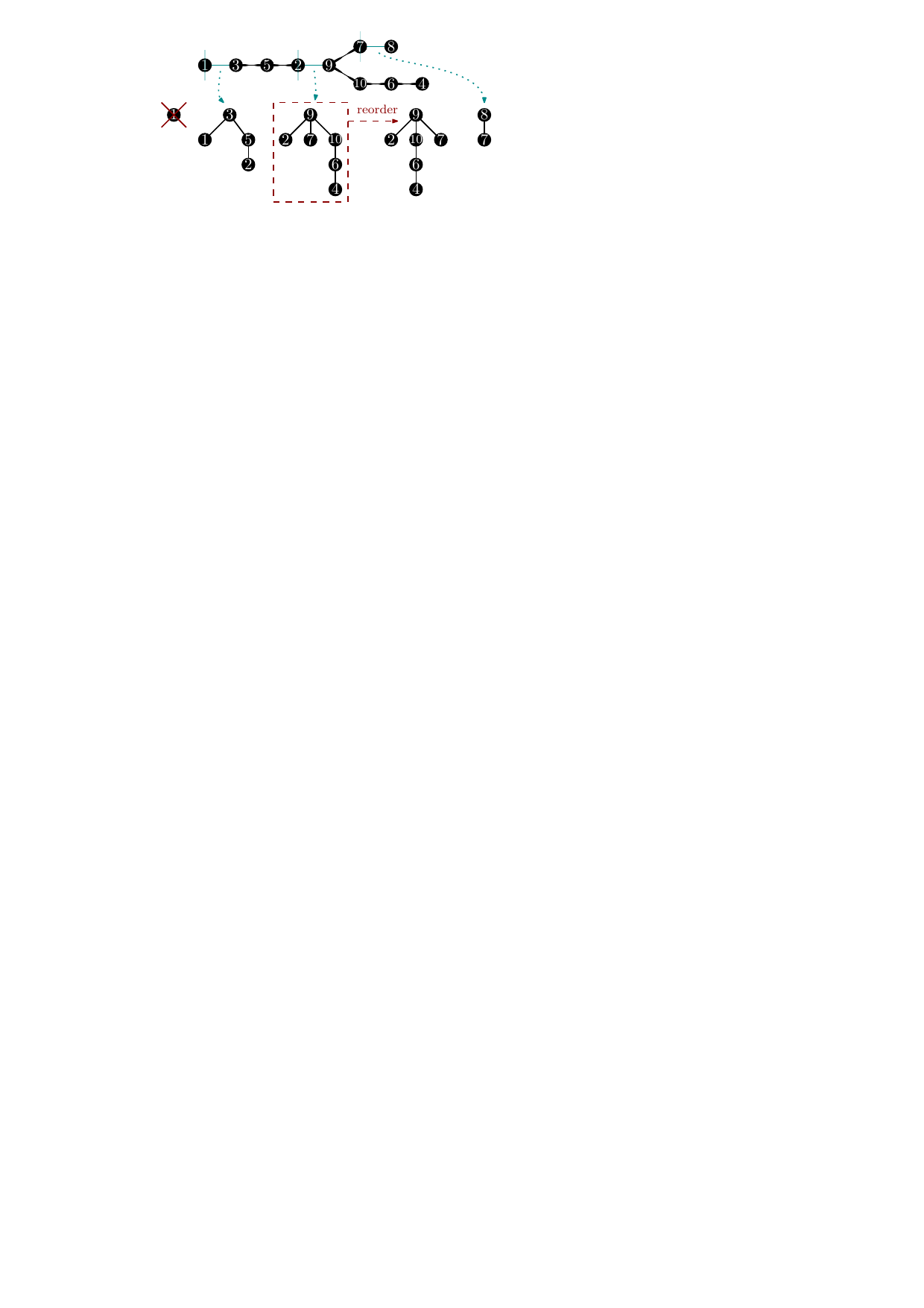}}{Example of a Reversed Step 3, given the Cayley tree of Figure~\ref{excayley}.
The ordering of the children has changed in only one tree in the forest,
since the other trees were already rightly ordered. }{reversed_step3}

\paragraph{Reversed Step 2: computing the end-label of the edges}

Let $t$ be any tree of $F$.  We are going to add  left labels and right labels of the form $(?,v)$ to edges of $t$, where the question mark $?$ is a temporary symbol which will be replaced by a vertex label in the next (reversed) step, and $v$ is the vertex label.

\fig{[width = 0.7 \textwidth]{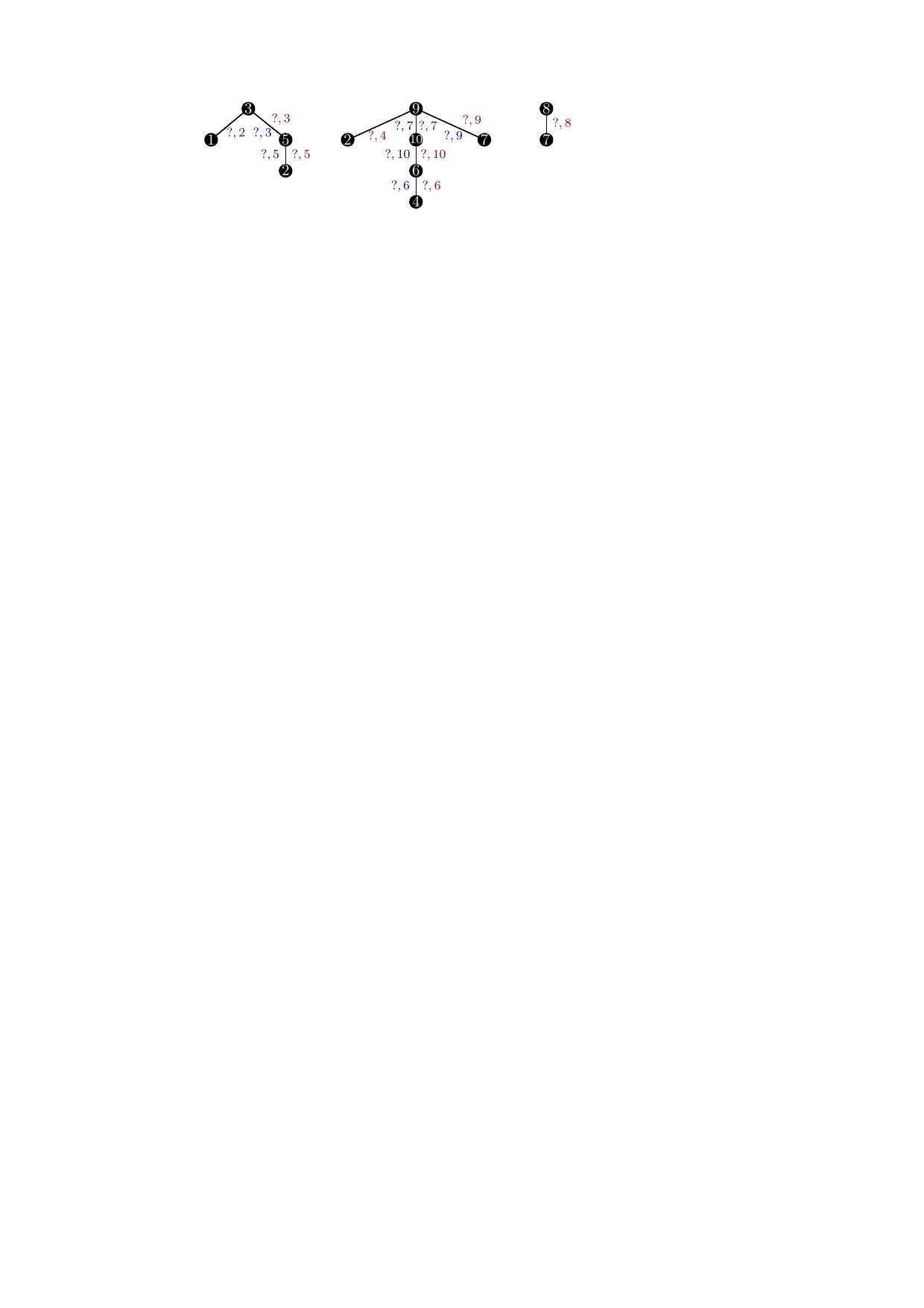}}{Reversed Step 2 with the example of the Cayley tree of Figure~\ref{excayley}.  }{reversed_step2}

To do so, proceed to a Depth First Search of $t$ where one favors the rightmost edge at each step, and name vertices $\vertex{v_1},\dots,\vertex{v_{|t|}}$, and edges $e_1, \dots, e_{|t| - 1}$ with respect to the visit order in the DFS (see again Figure~\ref{bfs}).

Then, for $k$ between $1$ and $|t| - 2$, we add $(?,v_k)$ as the left \emph{and} right labels of $e_k$. For $k = |t|-1$, we just add a right label $(?,v_{|t|-1})$ (no left label) at $e_{|t|-1}$.

This step is shown at Figure~\ref{reversed_step2} with the same example as earlier.

\paragraph{Reversed Step 1: fully identifying the 1,2-tree edges.} We have now to fill in the question marks of the left and right labels to be consistent with the rules of Step 1 (see Table~\ref{table:step1}).

\begin{table}[h!]
\begin{center}
\begin{tabular}{|m{25pt}|c|c|c|c|c|c|c|}
\hline
edge & 
initial state &
$9 - 2$ &
 $9 - 10$
\\ \hline
tree &
 \setcounter{ct}{0}
\whiledo {\value{ct} < 3}
{
$\vcenter{\hbox{\includegraphics[scale=1.1]{images/reversed_step1v\thect}}}$
\if \thect 2 \else \esperluette  \fi
\stepcounter {ct}}
  \\ \hline
edge & 
$ 10 - 6$ &
$6 - 4$ &
$9 - 7$
\\ \hline
tree &
 \setcounter{ct}{3}
\whiledo {\value{ct} < 6}
{
$\vcenter{\hbox{\includegraphics[scale=1.1]{images/reversed_step1v\thect}}}$
\if \thect 5 \else \esperluette  \fi
\stepcounter {ct}}
  \\ \hline

 \end{tabular}
 \end{center}

\caption{Reversed Step 1, edge by edge, given the second tree of Figure~\ref{reversed_step2}.}
\label{tab:reversed_step_1}
\end{table}

Let $t$ be any tree of $F$.  Let $\vertex \ell$ be the leftmost child of the root of $t$. As we noticed at the end of Reversed Step 3, $\vertex \ell$ is a leaf.
Since the edge linking the root of $t$ to $\vertex \ell$ has been visited last in the rightmost DFS, it has only a right label, which is of the form $(?,y)$.
Replace this right label by $(\ell,y)$.

Then, iterate over all edges $e$ of $t$ via a Depth First Search, which this time favors the leftmost edges. We skip the first edge, which connects the root to $\vertex \ell$ (this edge has already been treated).

\fig{[scale = 1.2]{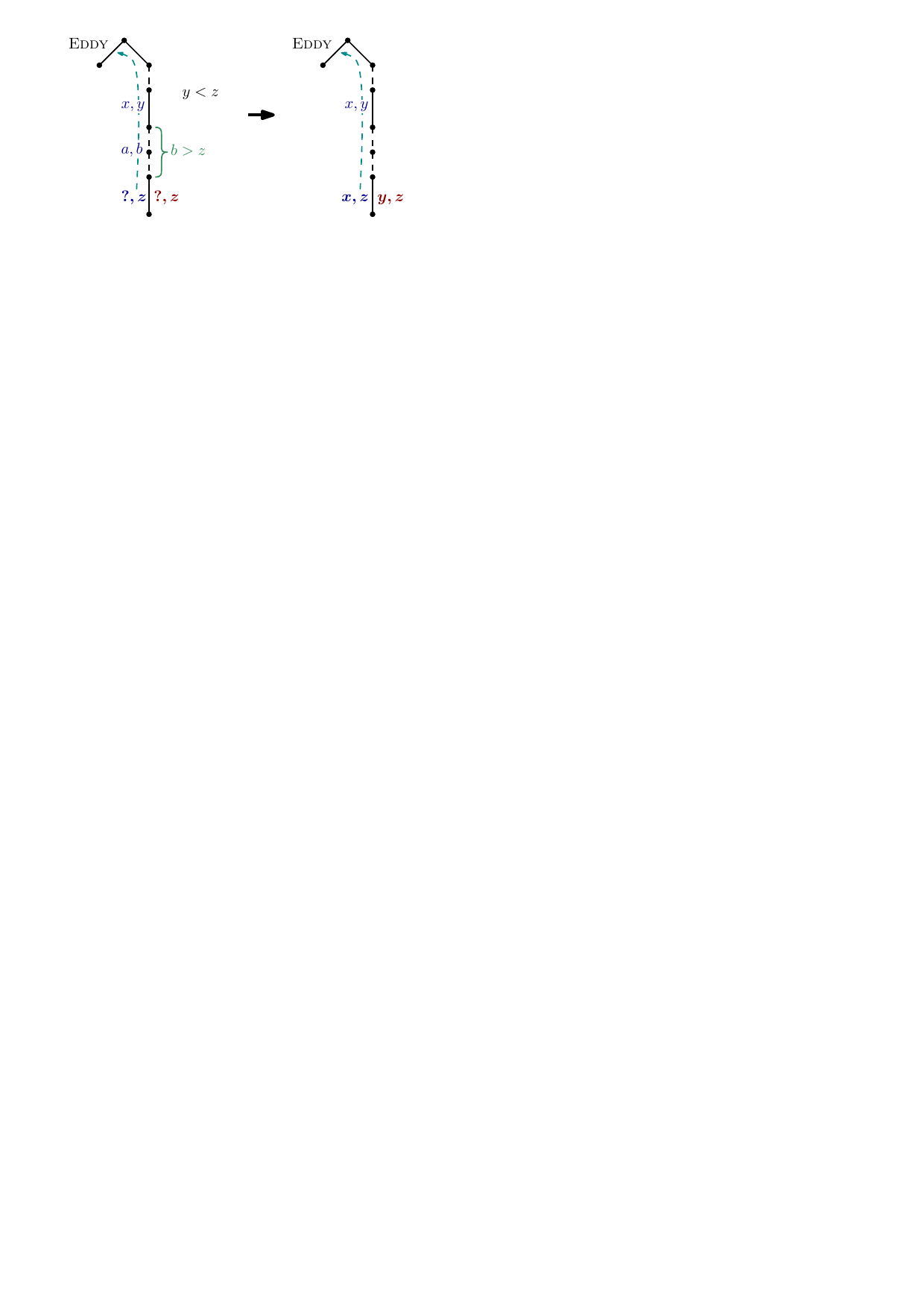}}{How to recover the vertices behind the question marks. }{recovering_edge}

Let $(?,z)$ be the left label (or right label) of $e$.
By induction, the question marks appearing in the left and right labels of every edge linking $e$ to $\eddy{e}$ have been replaced.
Among all of these edges, $\eddy{e}$ excluded, 
we pick the first one, starting from the bottom, 
which has a left label $(x,y)$ such that $y < z$. 
If no such edge exists, then we set $(x,y)$ as the right label of $\eddy{e}$. 
We then replace the left label of $e$ by $(x,z)$ and the right label of $e$ by $(y,z)$.
This process is schematized by Figure~\ref{recovering_edge}.

A full example is given by Table \ref{tab:reversed_step_1}.

\begin{proposition}
The above transformation describes the inverse of $\tau$. In particular, $\tau$ is a bijection.
\end{proposition}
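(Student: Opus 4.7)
The plan is to check, one by one, that the three reversed steps invert the corresponding forward steps of Section~\ref{ss:from-12-to-cayley}; once this is done, the composition is the inverse of $\tau$, which is then a bijection, which together with Proposition~\ref{prop:tau} yields Theorem~\ref{theo:central}.

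\textbf{Reversed Step 3} is essentially a direct reading of Proposition~\ref{prop:tau}: the proposition states that each tree of $F$ becomes a subtree of $\tau(G)$ in which every edge is a twist except the root edge, which is increasing. Hence the increasing edges of $\tau(G)$ are exactly the edges to cut in order to recover the underlying (unordered) trees of $F$. The plane ordering of children at each vertex is then forced by Lemma~\ref{lem:ordering}, and Lemma~\ref{lem:left_leaf} guarantees that the leftmost child of each root is a leaf, matching the observation made at the end of the description of Reversed Step 3.

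\textbf{Reversed Step 2} is immediate: in forward Step 2 the vertices $\vertex{v_1},\dots,\vertex{v_{|t|-1}}$ of a tree $t$, visited in rightmost DFS order, were labeled by the end labels of the edges $e_1,\dots,e_{|t|-1}$ taken in the same order. Running the same DFS and reading off the vertex labels therefore recovers the end labels, and the fact that the last visited edge is the root edge (which carries only a right label by Lemma~\ref{lem:left_leaf}) explains why $e_{|t|-1}$ receives only a right label.

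\textbf{Reversed Step 1} is the delicate part and the main obstacle. For every non-root edge $e$ with end label $z$, one has to recover the edge $e^\star$ in the final forest that carried the label $(x,y)$ at the moment $e$ was introduced by Rule 2. I would argue, by induction on edges in leftmost DFS order (the order used by Reversed Step 1), that $e^\star$ lies on the path from $e$ to $\eddy{e}$. At the moment of insertion, $e^\star$ is either the parent of $e$ (Rule 2 Left) or the immediate left sibling of $e$ (Rule 2 Right); by Lemma~\ref{lem:edyfinitif} the labels of $\eddy{e}$ are frozen from that moment on, and subsequent applications of Rule 2 affecting $e^\star$ can only relocate it upwards along this same path. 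Among the candidates, $e^\star$ is the bottommost whose left label $(x,y)$ satisfies $y<z$: this inequality holds because $y$ is the end label of an edge already present when $e$ was inserted and end labels are introduced in increasing order; conversely, any strictly lower candidate would have been created after $e^\star$, hence after all end labels smaller than $z$, precluding $y<z$. When no strict ancestor qualifies, $e^\star=\eddy{e}$ and the label is stored on the right side of $\eddy{e}$. This is exactly the selection rule of Reversed Step 1, so the procedure restores the forest $F$ together with its labels, and composing the three reversed steps yields $G$ from $\tau(G)$.
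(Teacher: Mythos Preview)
Your proof is correct and follows essentially the same route as the paper's: verify step by step that each Reversed Step undoes the corresponding forward step, relying on Lemmas~\ref{lem:left_leaf}, \ref{lem:totally-twisted}, \ref{lem:ordering} for Reversed Step~3, the DFS correspondence for Reversed Step~2, and the ``first end label smaller than $z$ along the path to $\eddy e$'' characterization for Reversed Step~1. The paper phrases Reversed Step~1 as an invariant (``the property of Figure~\ref{recovering_edge} holds throughout Step~1 because every later insertion has a larger end label''), whereas you track the specific edge $e^\star$ along the path---but this is the same argument.

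One small slip: in your last paragraph you write that any strictly lower candidate ``would have been created after $e^\star$''; you need \emph{created after $e$}, since it is the comparison with $z$ (the end label of $e$) that forces $y'>z$ for such an edge. Being created after $e^\star$ alone would not give that inequality. Also, the sentence ``subsequent applications of Rule~2 affecting $e^\star$ can only relocate it upwards along this same path'' is informal---$e^\star$ never moves; rather, new edges get inserted between $e$ and $e^\star$---but the intended conclusion (that $e^\star$ remains on the path from $e$ to $\eddy e$) is correct and is what the paper's invariant formulation implicitly encodes.
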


\begin{proof}
  \textbf{Reversed Step 3 is the inverse of Step 3.} It is not complicated to see that after Reversed Step 3, we can recover the original Cayley tree by merging the trees of the forest as in Step~3.
  
  We also need to prove that the decomposition of Reversed Step 3 is unique,
  i.e. there is no other decomposition of the Cayley trees which forms a forest which could be in the image of Step 2.

  First, by Lemma~\ref{lem:left_leaf}, the leftmost child of the root of each tree must be a leaf and carry the smallest label in its tree. 
  It implies that when we attach the trees back together, the edges incident to the points of attachment are necessarily increasing, which is the case in Reversed Step 3.
  
  Second, by Lemma~\ref{lem:totally-twisted}, every edge of each tree must be a twist. This is checked here since during Reversed Step 3, we have cut all the increasing edges.

  Finally, the ordering of the children is uniquely determined by the minima, as stated by Lemma~\ref{lem:ordering}. 

  \textbf{Reversed Step 2 is the inverse of Step 2.} Step 2 is identical to Reversed Step 3: we match the edge end labels with the vertex labels  by respecting the visit order of a rightmost DFS.

  \textbf{Reversed Step 1 is the inverse of Step 1.} The property given by Fi\-gure~\ref{recovering_edge}, 
  namely 
  "\textit{If $e$ is an edge with left label $(x,z)$  and right label $(y,z)$, then $(x,y)$  is the first label encountered along the path from $e$ (left side) to $\eddy e$ (right side), which satisfies $y < z$.}", always holds during Step~1. 
  Indeed, this is true when we insert $e$ in the forest (see Rule~2 Table~\ref{table:step1} for evidence). It remains true by induction since every edge we insert afterwards has a left label $(a,b)$ where $b > z$.

  Moreover, this property uniquely characterizes the numbers to be used in place of question marks.
\end{proof}

\subsection{Express transformation}

It is possible to express $\tau$ in a more straightforward manner, skipping the Depth First Search of Step~2 by incorporating the vertex labels directly in Step~1. The transformation is summed up in Table~\ref{table:express}.

More precisely, given an increasing 1,2-tree $G$, we construct a plane tree $\mathbb T$ where the edges carry left or/and right labels, like in Step~1 in Subsection~\ref{ss:from-12-to-cayley}, but on top of that, the vertices are labeled by a number between $1$ and $n$.

\begin{table}[h!]
\begin{tabular}{|m{0.15\textwidth}|c|c|}  \firsthline

\begin{center}
\textbf{Rule 1.} \\
$\vertex v$ is attached to a vertex $\vertex x$ as a leaf
\end{center}

 &

\multicolumn{2}{c|}{
 \begin{minipage}{0.75 \textwidth}
 \hspace{0.25cm}
\begin{center}
 \includegraphics[scale=1.1]{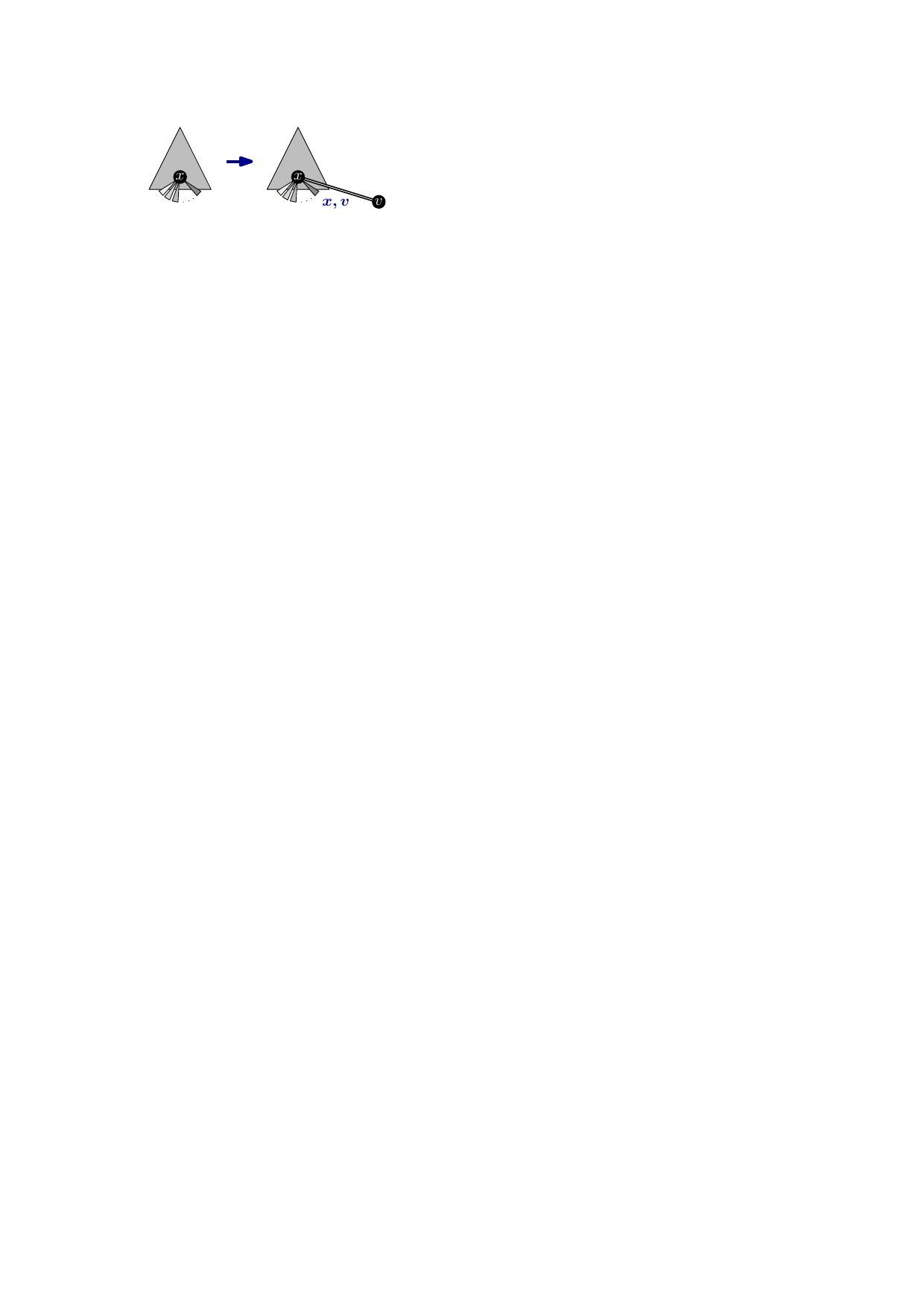}
\end{center}
\hspace{0.25cm}
 \end{minipage}
    }

  \\ \hline

\begin{center}\textbf{Rule 2.} \\
$\vertex v$ is attached to $\vertex x$ and $\vertex y$ as a triangle
\end{center}
&

\begin{minipage}{0.33 \textwidth}
\begin{center}
\vspace{5pt}
the label is on the left \\ \vspace{5pt}
 \includegraphics[width=0.99 \textwidth]{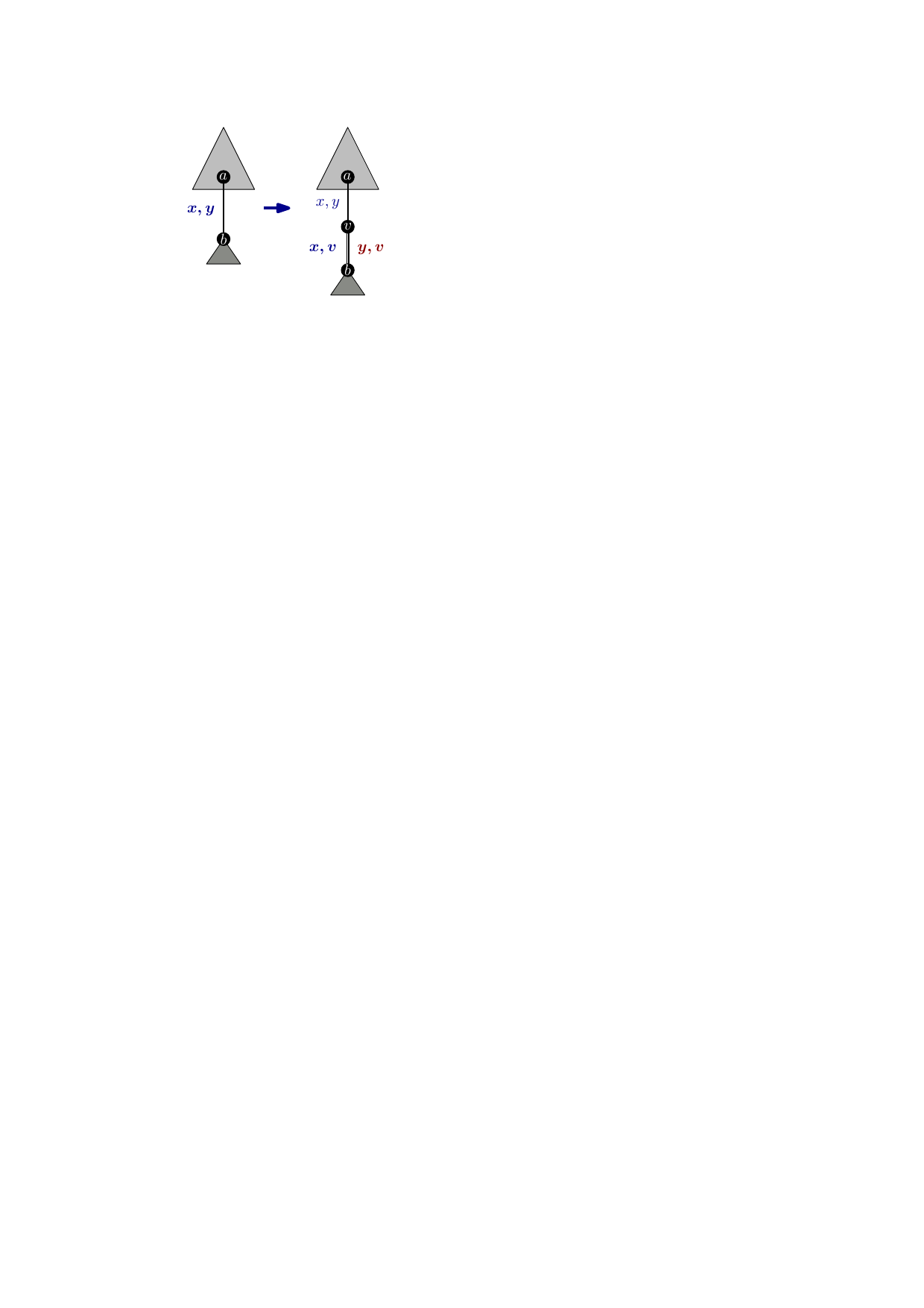}
 \vspace{-10pt}
\end{center}
\end{minipage}

 &
\begin{minipage}{0.44 \textwidth}
\begin{center}
the label is on the right \\
 \vspace{5pt}
  \includegraphics[width=0.99 \textwidth]{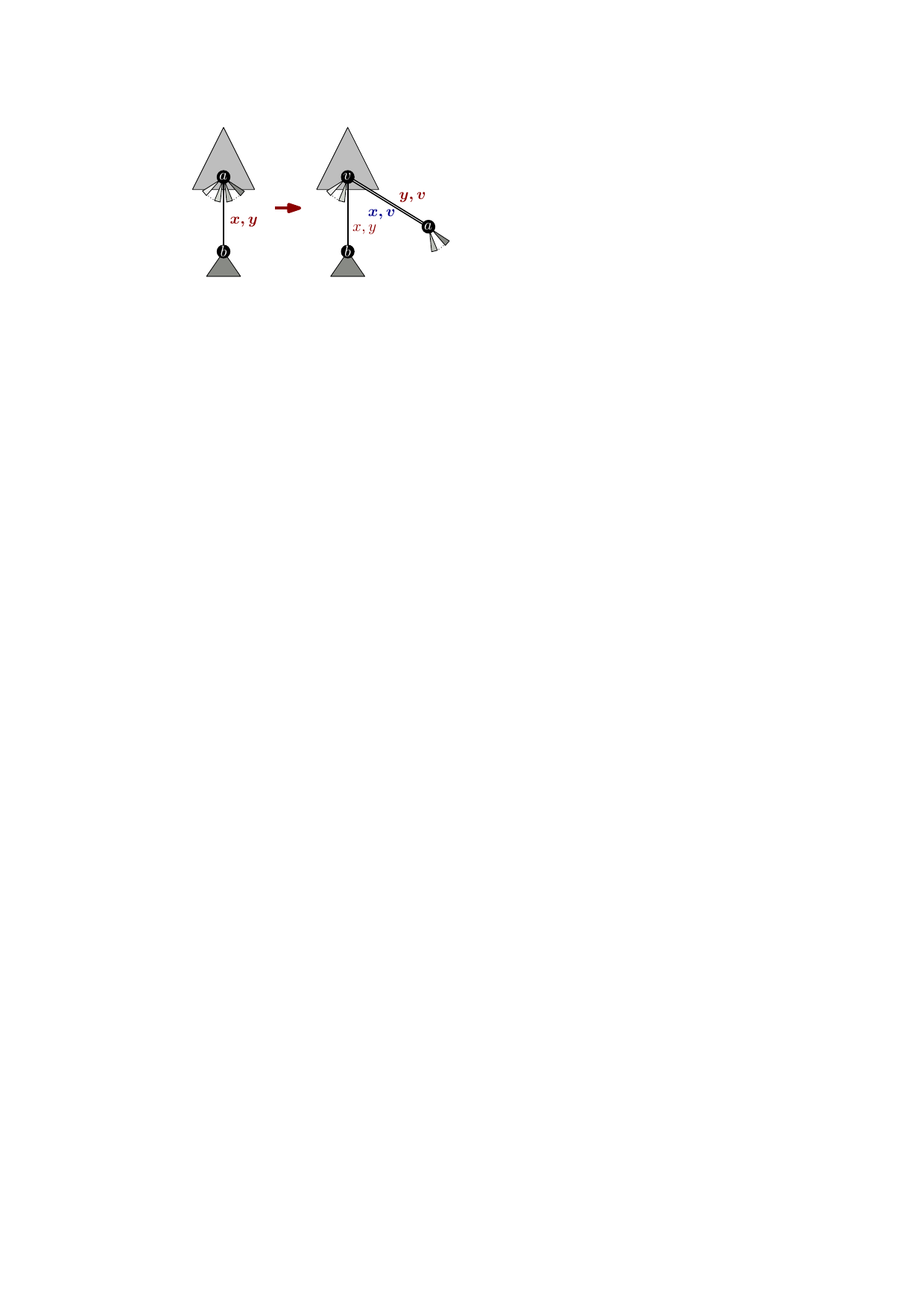}
\end{center}
\end{minipage}

  \\ \hline
\end{tabular}

\caption{Three rules of the express transformation\label{table:express}.}
\end{table}

Let us initialize $\mathbb T$ as a tree with only one vertex labeled by $1$.
Like Step~1 of $\tau$,
 we successively consider every vertex $\vertex v \neq \vertex 1$ of $G$,
  starting from vertex $\vertex 2$ and ending to vertex $\vertex n$.
\begin{itemize}
\item \textbf{Rule 1}. If $\vertex v$ has only one neighbor $\vertex x$ such that $x < v$ (which means it was attached to $\vertex x$ as a leaf during the building of $G$), then we add a new leaf $\vertex v$ in $\mathbb T$ as the rightmost child of vertex $\vertex x$. The edge linking $\vertex x$ to $\vertex v$ carries a left label $(x,v)$.
\item \textbf{Rule 2}. If $\vertex v$ has two neighbors $\vertex x$ and $\vertex y$ such that $x < y < v$ (which means it was attached to edge $\{\vertex x,\vertex y\}$ as a triangle during the building of $G$), then we modify $\mathbb T$ depending on whether the label $(x,y)$ is on the left or right of the edge $e = \vertex a \vertex b$ that carries it.

\begin{itemize}
\item \textbf{Rule 2 Left}. If $(x,y)$ is on the left of $e$, then we add a new vertex $\vertex v$ in the middle of $e$
(see Table~\ref{table:express} bottom left).
The edge $\vertex a \vertex v$ inherits the left label and the right label (if any) of $e$. 
We put a label $(x,v)$ on the left of $\vertex a \vertex v$, and a label $(y,v)$ on the right of $\vertex a \vertex v$.
\item  \textbf{Rule 2 Right}. If $(x,y)$ is on the right of $e$, then we change the label of $\vertex a$ by $v$ and we insert a new edge $\vertex v \vertex a$ in $\mathbb T$ as the right sibling of $\vertex v \vertex b$, pushing below $\vertex v \vertex a$ every subtree that was incident to $e$ and on the right of $e$ (see Table~\ref{table:express} bottom right). We put a label $(x,v)$ on the left of $\vertex v \vertex a$, and a label $(y,v)$ on the right of $\vertex v \vertex a$.
\end{itemize}
\end{itemize}

The process is illustrated step by step by Table~\ref{table:express-detailed}.

Note that this construction is reminiscent of Shor's decomposition of Cayley tree with respect to their twists (cf Figure~\ref{fig:shor-idea}). Rule 2 Left can be seen as the special case $a=0$ of Figure~\ref{fig:shor-idea} bottom, while Rule 2 Right represents the case $a>0$.

\begin{table}[h!]
\begin{center}
\newcounter{cct}
\begin{tabular}{|m{39pt}|c|c|c|c|c|c|}
\hline
vertex $v$ &\setcounter{cct}{1}
\whiledo {\value{cct} < 7}
{\thecct
\if \thecct 6 \else \esperluette \fi
\stepcounter {cct}
  }
\\ \hline
\begin{center}
tree
\end{center}&
 \setcounter{cct}{1}
\whiledo {\value{cct} < 7}
{$\vcenter{\hbox{ \includegraphics{images/estep1v\thecct}}}$
\if \thecct 6 \else \esperluette  \fi
\stepcounter {cct}}

\\
\hline
vertex $v$ &
7 & 8 &
  \multicolumn{2}{|c|}{9}
&
\multicolumn{2}{|c|}{10}
\\ \hline
\begin{center}
tree
\end{center}
 \setcounter{cct}{7}
\whiledo {\value{cct} < 9}
{ \esperluette
$\vcenter{\hbox{\includegraphics{images/estep1v\thecct}}}$
\stepcounter {cct}}

& \multicolumn{2}{|c|}{$\vcenter{\hbox{\includegraphics{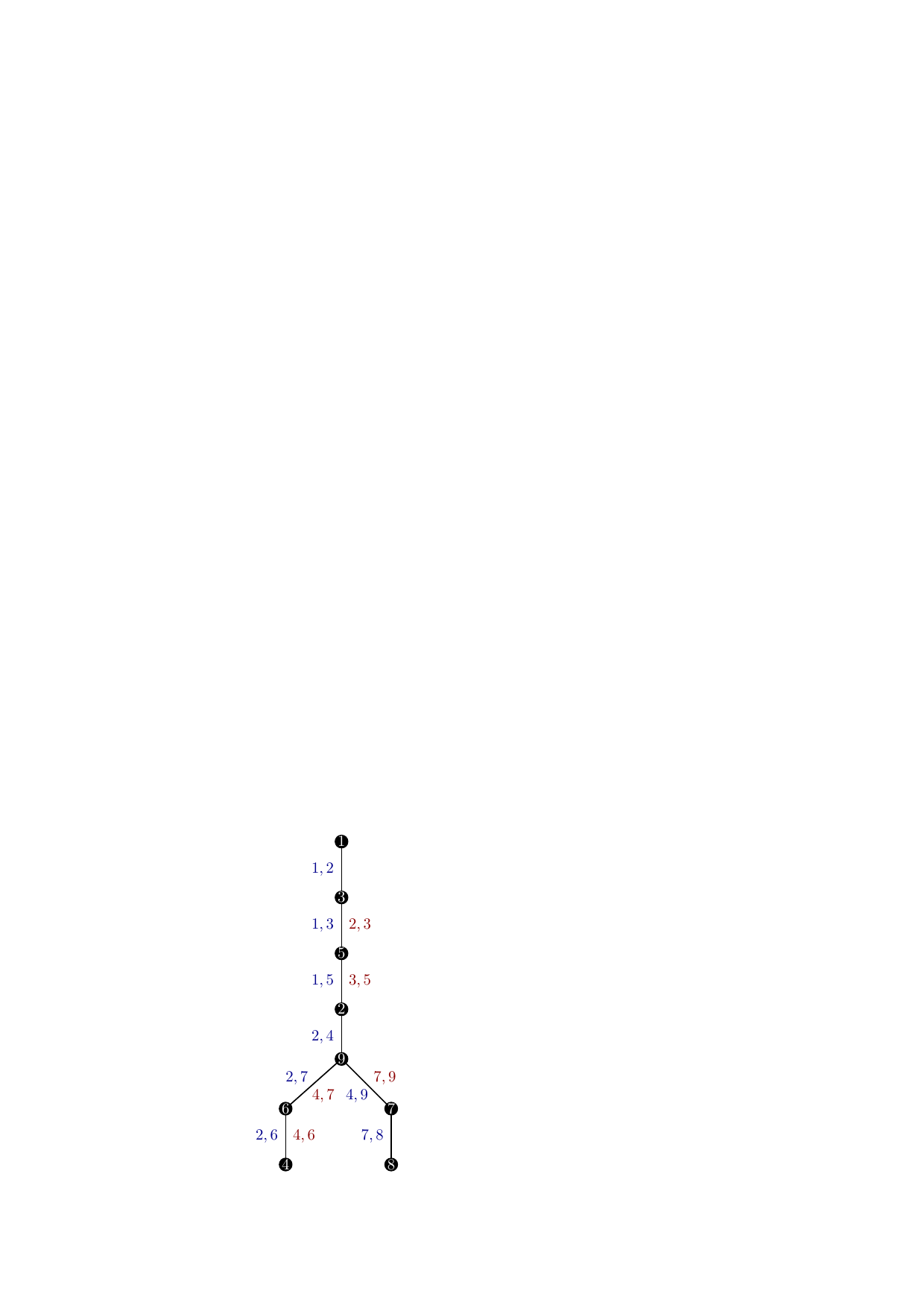}}}$ }
& \multicolumn{2}{|c|}{$\vcenter{\hbox{\includegraphics{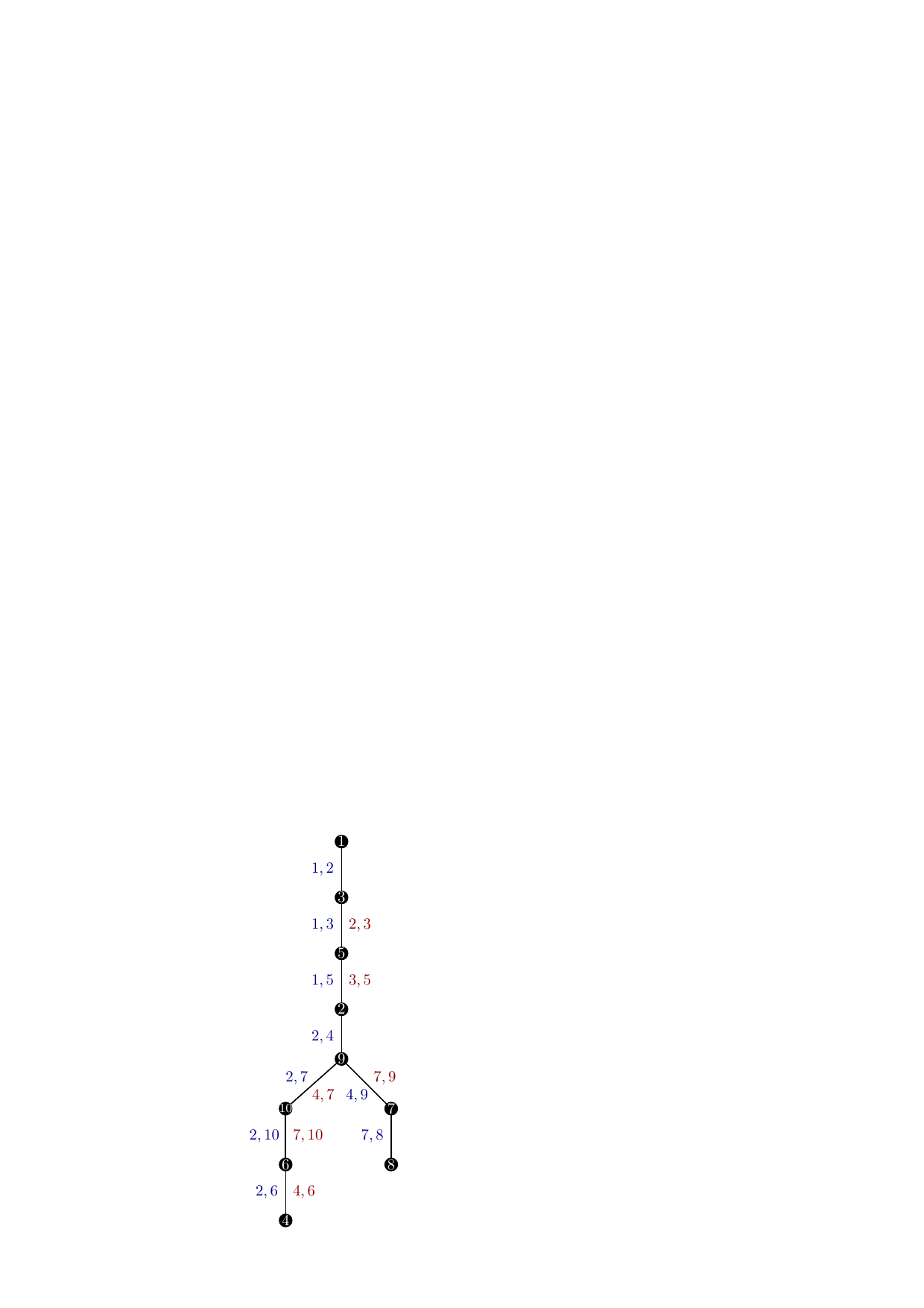}}}$ }

\\
  \hline
\end{tabular}
\end{center}
\caption{Construction of the tree through the express transformation from the increasing 1,2-tree of Figure~\ref{ex12tree} }
\label{table:express-detailed}
\end{table}

\begin{proposition}
If we remove the edge labels from $\mathbb T$ and 
forget the planarity (i.e how children are ordered), then we recover $\tau(G)$.
\end{proposition}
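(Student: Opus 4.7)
My plan is to prove the equivalence by induction on the number of vertices processed. For $v \geq 1$, write $G_v$ for the subgraph of $G$ induced by $\{1,\ldots,v\}$ (itself an increasing 1,2-tree), $\mathbb T^{(v)}$ for the express tree after processing vertex $\vertex v$, and $F^{(v)}$ for the labelled forest obtained by running Step~1 on $G_v$. The invariant I would maintain is: $\mathbb T^{(v)}$, with edge labels removed and planarity forgotten, coincides as a vertex-labelled tree with the result of applying Step~2 and Step~3 to $F^{(v)}$, i.e.\ with $\tau(G_v)$. The base case $v = 1$ is immediate since both sides reduce to the single vertex $\vertex 1$.

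For the inductive step, I would check rule by rule that the local modification made by the express transformation on $\mathbb T^{(v-1)}$ reproduces the net change performed on $\tau(G_{v-1})$ by applying the corresponding Step-1 rule to $F^{(v-1)}$, re-running Step~2 on the enlarged forest, and re-merging in Step~3. For Rule~1, Step~1 introduces a new singleton-edge tree in $F$ whose leaf endpoint is labelled $\vertex x$ (the first coordinate of its right label $(x,v)$) and whose root is labelled $\vertex v$ (the end label); Step~3 then grafts this singleton to the existing tree at the shared label $\vertex x$, adding $\vertex v$ as a new leaf of $\vertex x$, exactly the effect of the express Rule~1. For Rule~2 Left, splitting $e = \vertex a \vertex b$ inserts a degree-two vertex in the middle; since all pre-existing edge labels are preserved and the rightmost DFS order differs only by the insertion of this new vertex, the Step-2 labels of all previously-placed vertices are unchanged, while the new vertex receives the end label of the freshly created edge $e'$, namely $v$. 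This matches the express Rule~2 Left.

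The subtler case is Rule~2 Right, which I would treat in more detail. Inserting $e'$ as the right sibling of $e$ at the upper endpoint $\vertex a$ makes $e'$ the new rightmost outgoing edge from $\vertex a$. By the Step-2 rule assigning to each non-leaf vertex the end label of its rightmost outgoing edge, the label of $\vertex a$ therefore shifts from its previous value (the end label of the former rightmost edge) to the end label $v$ of $e'$. The lower endpoint $c$ of $e'$ now slots into the DFS position that used to be occupied by the root of $\vertex a$'s old rightmost subtree; since the subtrees pushed below $c$ keep all their edge labels, $c$ inherits precisely $\vertex a$'s old label, while every other vertex retains its label. This is exactly the relabelling performed by the express Rule~2 Right, which changes $\vertex a$'s label to $\vertex v$ and introduces a new vertex bearing $\vertex a$'s former label as the root of the pushed subtrees.

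I expect the main obstacle to be the careful bookkeeping for Rule~2 Right: one must verify that the one-step shift in the rightmost DFS caused by inserting $e'$ permutes labels precisely as described, moving $\vertex a$'s old label onto the newly introduced vertex $c$ while leaving every deeper vertex untouched. This boils down to the stability of the end labels of pre-existing edges (they depend only on the right labels set at insertion time and never change afterwards) together with the observation that the DFS order below $c$'s new subtree is identical to the DFS order previously sitting below $\vertex a$'s old rightmost branch.
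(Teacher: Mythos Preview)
Your inductive strategy --- checking that each insertion step of the express construction has the same net effect on the vertex-labelled tree as the corresponding Step~1 rule followed by re-running Steps~2--3 --- is essentially the approach the paper takes as well. But there is a genuine gap.

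You implicitly assume that when a vertex $\vertex v$ is attached in $G$ to an edge $\{\vertex x,\vertex y\}$, the label $(x,y)$ sits on the \emph{same side} (left or right) of its carrying edge in $F^{(v-1)}$ as in $\mathbb T^{(v-1)}$, so that both constructions fire the same sub-rule (2 Left or 2 Right). This is true for edges created by Rule~2, which carry identical left and right labels in both constructions. It fails, however, for edges created by Rule~1: in the original Step~1 such an edge carries only a \emph{right} label $(x,v_0)$, whereas in the express version it carries only a \emph{left} label $(x,v_0)$. Hence whenever a later vertex is attached in $G$ to a Rule-1-created edge, the original Step~1 applies Rule~2 Right (to the root edge of some tree of $F$) while the express transformation applies Rule~2 Left. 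Your three-case analysis (1, 2L, 2R) never compares original-2R against express-2L, so the induction does not close.

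Put differently, your invariant (equality of the underlying vertex-labelled trees) is too weak to carry the step: which sub-rule fires depends on the left/right position of the edge label, information your invariant discards. The paper confronts exactly this point in its Claim~1, observing that the Rule-1 label-side discrepancy makes the two intermediate structures differ by a ``toppling'' of each root edge (root-with-leftmost-leaf $\leftrightarrow$ root-with-unique-child, with the label switching sides), under which Rule~2 Right on a root edge corresponds to Rule~2 Left on the toppled edge. To repair your argument you need either this structural correspondence, or an additional direct verification that original Rule~2 Right and express Rule~2 Left, when applied to a Rule-1-created edge, produce the same change in the vertex-labelled tree.
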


\begin{proof}[Proof ideas.] Instead of a formal dragging on proof, the authors prefer to explain in rough words why the two transformations are identical. 

\paragraph{Claim 1. Rule 1 with left label is equivalent to Rule 1 with right label.} 
As a first remark, 
note that we have changed Rule 1 from Table~\ref{table:step1} 
so that the new edge carries a left label instead of a right label. 
The consequences are almost the same.
\footnote{
Even if it does not seem consistent, 
there are reasons why in $\tau$ we use  right labels and for the express transformation we use left labels.  
On one hand, using right labels in $\tau$ makes the rightmost Depth First Search of Step 2 more natural (we need to visit the root edge last).
On the other hand, if we had used right labels in the express transformation, then it would be possible that a vertex label will be shifted in a wrong place by using a Rule 2 Right on a root edge.
}

Let us suppose that we modify Rule 1 in the original transformation $\tau$ 
so that the edge in the new tree carries a left label and no right label. 
Then, instead of trees such that the leftmost children incident to the root are leaves (see Lemma~\ref{lem:left_leaf}),
 we obtain trees where the root has only one child
 (the proof is similar as the one of Lemma~\ref{lem:left_leaf}). 
These two families of trees are in bijection: to recover the original trees,
we have to topple the root edges to the right so that they become leaves at the far left of the new root,
and change the left labels into right labels.

\paragraph{Claim 2. Rule 1 where we attach a leaf to the previous tree (Table~\ref{table:express}) is equivalent to Rule~1 where we add a new one-edge tree to the previous forest (Table~\ref{table:step1}).} 
In comparison with $\tau$, the express transformation does not treat the 2-connected components separately.
 In order to recover the trees of Step~1 of $\tau$, we can cut $\mathbb T$ at the root of the edges with no right label.

Note also that the sources\footnote{A \emph{source} of an edge $\vertex u \vertex v$ is the vertex $\vertex u$.} 
of the edges with no right label keep the same vertex labels throughout the construction of $\mathbb T$. 
Indeed, we can prove that the edges with no right label are always the rightmost children of their parents, which ensures that we cannot apply Rule 2 Right to one of their right siblings,
which would change the label of the source.
Thus, if we cut $\mathbb T$ at the root of the edges with no right label at any point during the construction, 
we can rebuilt $\tau$ by merging vertices with the same label, 
like in Step~3 of $\tau$.

\paragraph{Claim 3. The vertex labeling of $\mathbb T$ is consistent with a rightmost Depth First Search (Step 2 of $\tau$).} Cut $\mathbb T$ at the root of each edge $\vertex u \vertex v$ with no right label, and start a rightmost DFS starting at $\vertex v$. We claim that for each integer $i$, the label of the $i$-th visited vertex is equal to the end label of the $i$-th visited edge, as in Step~2 of $\tau$.

This can be proved by induction and by thoroughly checking how the vertex labels evolve in Rule~2. For example, in Rule 2 Right (Table~\ref{table:express} bottom right), $v$ is the label of the newly inserted $\vertex v$ and the end label of the rightmost incident edge of $\vertex v$, which agrees with the visit order of a rightmost DFS. In the same vein, we have to check that $y$, the end label of the edge $\vertex v \vertex b$, corresponds to the label of the leftmost leaf descendant of $\vertex a$ (which is true by induction).  \\

By combining all three claims, which together factor all the differences between $\tau$ and the construction of $\mathbb T$, we infer that the two constructions lead to the same Cayley tree. 
\end{proof}

%
%
%
%

\section{Uniform random sampling in linear time}
\label{s:random_generation}

In this section, we describe two sampling algorithms which given an integer $n$, output uniformly at random:
\begin{itemize}
\item a Cayley tree with $n$ vertices,
\item an increasing 1,2-tree with $n$ vertices.
\end{itemize}
The computation time is guaranteed in $O(n)$ for both algorithms, even in the worst-case scenario. 

All the complexities in this section do not include the cost of a Random Number Generator. Indeed, we assume that drawing a random integer between $0$ and $n$ is in~$O(1)$ and not in $O(\log(n))$.

\subsection{Random Cayley trees}

Most random samplers for Cayley trees found in the literature or on the Internet are based on the bijection between Cayley trees and Prüfer codes~\cite{prufer}.
The idea is to draw a uniform Prüfer code, that is a sequence of $n-2$ numbers between $0$ and $n-1$, 
and use the Prüfer transformation to get a uniform random tree.

The book by Nijenhuis and Wilf~\cite{nijenhuis-wilf} already introduced this idea by presenting a random sampler running in $O(n^2)$. This complexity can be reduced to $O(n)$ using the algorithm described in~\cite{optimal_prufer}.

We propose here a new elementary sampling algorithm running in $O(n)$, whose idea is directly inspired from the Aldous-Broder algorithm \cite{aldous}.
The Aldous-Broder algorithm is a random walk-based algorithm that generates a uniform random spanning tree of any connected undirected graph $G$. The algorithm starts from an arbitrary vertex and proceeds by randomly traversing the edges of $G$ until all vertices are visited. At each step, if an unvisited neighbor is encountered, the algorithm adds the corresponding edge to the spanning tree. The process continues until all vertices of $G$ are visited, resulting in a uniformly random spanning tree. 

In our context, $G$ is the complete graph $K_n$ with $n$ vertices (loops included). If we naively run the Aldous-Broder algorithm on $K_n$, we obtain a random Cayley tree with $n$ vertices, but the running time is in $O(n \log(n))$ in the average case:
this is equivalent to the coupon collector's problem.

It is however possible to fasten the random walk by skipping the part where the random walk goes through vertices which are already visited. 
Indeed, whenever the walk goes from a vertex $\vertex u$ to an already visited vertex $\vertex v$, then we can force the random walk to go from $\vertex v$ to an unvisited vertex, chosen at random among the unvisited vertices, without any bias on the spanning tree. This is justified by the following lemma.

\begin{lemma}Consider a portion of a random walk on $K_n$ in the form $\vertex u \rightarrow \vertex{v_1} \rightarrow \dots \rightarrow  \vertex{v_\ell}  \rightarrow \vertex w$, where $\vertex u$ and $\vertex w$ are vertices visited for the first time, and $\vertex{v_1}, \dots, \vertex{v_\ell}$ are already visited vertices.

Then, the random variable associated with $\vertex{v_1}$ has the same probability distribution as the random variable associated with $\vertex{v_\ell}$.
\end{lemma}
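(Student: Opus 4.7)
The plan is to exploit the memorylessness and step-uniformity of the random walk on $K_n$ with loops: from any vertex, the next vertex is drawn uniformly at random from the $n$ vertices of $K_n$, independently of the past. Once we condition on the entire history of the walk up to and including the step at which $\vertex u$ is first visited, the subsequent steps form a sequence of i.i.d.\ uniform draws over $\{\vertex 1,\ldots,\vertex n\}$.

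First, I would fix such a history and let $S$ denote the set of already-visited vertices at that moment (so $\vertex u \in S$). For every tuple $(a_1,\ldots,a_\ell,a_{\ell+1}) \in S^\ell \times (\{\vertex 1,\ldots,\vertex n\}\setminus S)$, the independence and uniformity of the successive steps give
\[
\IP\bigl(\vertex{v_1}=a_1,\ldots,\vertex{v_\ell}=a_\ell,\vertex{w}=a_{\ell+1}\bigr)=\left(\frac{1}{n}\right)^{\ell+1}.
\]
The event in the lemma (``$\vertex{v_1},\ldots,\vertex{v_\ell}$ are already visited and $\vertex w$ is new'') is exactly $\{(\vertex{v_1},\ldots,\vertex{v_\ell},\vertex w)\in S^\ell \times (\{\vertex 1,\ldots,\vertex n\}\setminus S)\}$. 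Consequently, conditionally on this event, the joint distribution of $(\vertex{v_1},\ldots,\vertex{v_\ell},\vertex w)$ is uniform over $S^\ell \times (\{\vertex 1,\ldots,\vertex n\}\setminus S)$.

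Finally, I would read off the marginals: the first $\ell$ coordinates of this uniform joint law are each uniform over $S$ and interchangeable. In particular $\vertex{v_1}$ and $\vertex{v_\ell}$ share the same distribution, which is the desired conclusion. Since the argument applies to every realization of the pre-$\vertex u$ history, the statement holds unconditionally. No serious obstacle is anticipated: the only point requiring care is to describe the conditioning event correctly, after which the $K_n$-with-loops structure makes the step-uniformity---and hence the joint uniformity on $S^\ell \times (\{\vertex 1,\ldots,\vertex n\}\setminus S)$---essentially immediate.
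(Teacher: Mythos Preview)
Your argument is correct. By exploiting that on $K_n$ with loops every step is an independent uniform draw from $\{\vertex 1,\ldots,\vertex n\}$, you show that, conditionally on the history up to $\vertex u$ and on the event $\{L=\ell\}$, the tuple $(\vertex{v_1},\ldots,\vertex{v_\ell},\vertex w)$ is uniform on $S^\ell\times(\{\vertex 1,\ldots,\vertex n\}\setminus S)$; in particular every $\vertex{v_i}$ is uniform on $S$, which is strictly more than what the lemma asks.

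The paper takes a different, more compressed route: it observes that the path $\vertex u\to\vertex{v_1}\to\cdots\to\vertex{v_\ell}\to\vertex w$ and the reversed-middle path $\vertex u\to\vertex{v_\ell}\to\cdots\to\vertex{v_1}\to\vertex w$ have the same probability, by the symmetry of $K_n$, and deduces equality of the laws of $\vertex{v_1}$ and $\vertex{v_\ell}$ from this measure-preserving involution. Both arguments rest on the same underlying fact---all one-step transition probabilities equal $1/n$---but the paper packages it as a bijection while you unfold it into an explicit joint law. Your version has the advantage of identifying the common distribution (uniform on $S$), which is exactly what the subsequent algorithm needs; the paper's version is a one-line symmetry argument that establishes only equality of the two marginals.
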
 
 \begin{proof}The random walk has the same probability to do $\vertex u \rightarrow \vertex{v_1} \rightarrow \dots \rightarrow  \vertex{v_\ell}  \rightarrow \vertex w$ than $\vertex u \rightarrow \vertex{v_\ell} \rightarrow \dots \rightarrow  \vertex{v_1}  \rightarrow \vertex w$ by symmetry of the complete graph.
 This explains why $\vertex{v_1}$ and $\vertex{v_\ell}$ share the same distribution.
 \end{proof}

\newcommand{\algocomment}[1]{\hfill \textcolor{Sepia}{\textbackslash\hspace{-5pt}\textbackslash\ #1}}
\begin{algorithm}
\textbf{Input:} \texttt{n}, a positive integer. \\
\textbf{Output:} \texttt{Edges}, an array listing the edges of a uniform random Cayley tree of size \texttt{n} whose vertex set is $\{1,\dots,\texttt{n}\}$. \\
\hspace*{1cm} $\texttt{Edges} \gets$ empty array; \\
\hspace*{1cm} $\texttt{Vertices} \gets$ $[1,2,\dots,\texttt{n}]$ array;  \algocomment{Arrays are indexed starting from 1} \\
\hspace*{1cm} \textbf{for} \texttt{nb\_unvisited} \textbf{from} $\texttt{n} - 1$ \textbf{to} $1$ \\
\hspace*{0.1cm} \!\algocomment{the subarray \texttt{Vertices[nb\_unvisited$+1\dots$\texttt{n}]} lists the encountered vertices} \\
\hspace*{2cm} \texttt{prev} $\gets$ \texttt{nb\_unvisited+1} \algocomment{index of the current position} \\
\hspace*{2cm} \texttt{next} $\gets$ random number between $1$ and \texttt{n}; \\
\hspace*{2cm} \textbf{if} \texttt{next} $>$ \texttt{nb\_unvisited} \textbf{then} \algocomment{if the next vertex was already visited}\\
\hspace*{3cm} \texttt{prev} $\gets$ \texttt{next}; \algocomment{then the random walk moves forward and} \\
\hspace*{3cm} \ \algocomment{at the next step we force it to go to an unvisited vertex :} \\
\hspace*{3cm} \texttt{next} $\gets$ random number between $1$ and \texttt{nb\_unvisited}; \\
\hspace*{2cm} \textbf{end if} \\
\hspace*{2cm} add $(\texttt{Vertices[prev]},\texttt{Vertices[next]})$ to \texttt{Edges}; \\
\hspace*{2cm} swap \texttt{Vertices[next]} and \texttt{Vertices[nb\_unvisited]};  \\
\hspace*{1cm} \textbf{end for} \\
\hspace*{1cm} \textbf{return} \texttt{Edges}; \\
\caption{Uniform random sampler for Cayley trees of size $n$. \label{algo:cayley}}
\end{algorithm}

Using this observation, Algorithm~\ref{algo:cayley} samples Cayley trees of given size according to the uniform distribution.
It runs in $O(n)$, where $n$ is the number of vertices, even in the worst-case scenario. 

Note that this algorithm performs on average $\frac 3 2 \, n$ random integer draws, whereas an algorithm based on Prüfer codes does only $n$ samples (which is optimal).

\subsection{Random increasing 1,2-trees}

At this point, drawing an increasing 1,2-tree with $n$ vertices uniformly at random is straightforward:
\begin{itemize}
\item we start from a random Cayley tree with $n$ vertices output by Algorithm~\ref{algo:cayley},
\item we apply the bijection $\tau$ of Section~\ref{s:bij} -- or more precisely its reciprocal $\tau^{-1}$ in Subsection~\ref{ss:from_cayley_to_increasing}.
\end{itemize} 
However, we need to be careful with the implementation of certain steps to ensure that we remain effective.
Reversed Step 3 can be performed in linear time with a recursive traversal of the Cayley Tree, remembering the mimima of the subtrees.
Sorting the children with respect to their minima can be done in $O(n)$ too, with a Bucket Sort for example.
Reversed Step 2 is just a rightmost DFS, which also has a linear time complexity.
Reversed Step 1 is more delicate, especially the part where we recover the vertices behind the question marks (see Figure~\ref{recovering_edge}). 
This problem is reminiscent of the All Left Nearest Smaller Neighbour problem~\cite{all_nearest_smaller_values} and can be solved in linear time by gathering all edges which share the same $\textsc{Eddy}$.

\begin{figure}[h!]
  \begin{center}
    \includegraphics[scale=0.5]{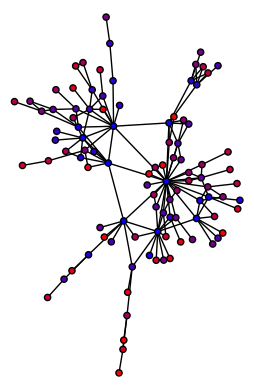}
    \hfill
    \includegraphics[scale=0.5]{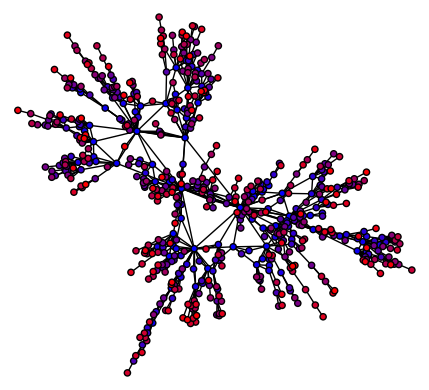}
    \hfill
    \includegraphics[scale=0.5]{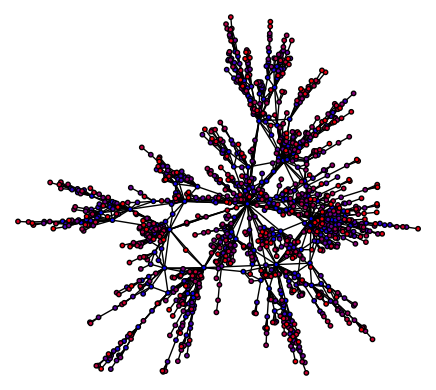}
  \end{center}

  \caption{Random increasing 1,2-trees with sizes 100, 600, 1200, respectively. The redder the vertices, the larger the labels.}
  \label{fig:random}
\end{figure}

Figure~\ref{fig:random} shows some random increasing 1,2-trees drawn uniformly at random.

\section{Conclusion}

By starting our study of increasing \lektrees\  with $k=2$, we discovered that increasing 1,2-trees are equinumerous with Cayley trees. 
We have given three proofs of this result, each with its own advantages:
\begin{enumerate}
  
  \item \textbf{Proof by recursive decomposition.} 
  This proof, mainly based on the Cayley tree decomposition of Shor~\cite{sho95}, is the simplest of the three.
  
  \item  \textbf{Proof by generating function.} 
  We found an exact formula for the genera\-ting function of increasing 1,2-trees (see Theorem~\ref{theo:gf}).
   The method, interesting in itself, involves solving a Partial Differential Equation using the method of characteristics, and should provide a promising approach for the study of \lektrees, with $k \geq 3$. 
   Besides, given how simple the generating function is, it would be intriguing to investigate a combinatorial interpretation (see Remark~\ref{rem:interpretation}).
   Finally, it might be worthwhile to study the asymptotic distribution of the number of triangles in a random increasing 1,2-tree.
   
   \item \textbf{Proof by bijection.} 
   This explanation is the most satisfactory from a combinatorial point of view. 
   It shows that  the increasing edges of Cayley trees are preserved via the bijection, while the twists are somehow transformed into triangles.
   Moreover, we have used this bijection to design an efficient random generator for increasing 1,2-trees.
\end{enumerate}

The investigation of increasing \lektrees\  with $k \geq 3$ should be a natural follow-up of this work.
In particular, it should be interesting to compare uniform random increasing \lektrees\ with other distributions on  \lektrees: 
uniform random labeled \lektrees~ \cite{enumeration-chordal},
\lektrees\ seen as a Pólya urn model\footnote{
  Cliques of size $x$ are represented by balls labeled by $x$. Start with an urn with one ball labeled by $1$. 
  At each step, draw uniformly a ball and put it back in the urn. If it was labeled $x$, add to the urn ${ x \choose i - 1}$ balls labeled $i$ for each $i \in \{1,\dots,x+1\}$. 
  (For example, if it was labeled $1$, you need to add $1$ ball labeled $1$ and $1$ ball labeled 2.
   If it was labeled $2$, you add $1$ ball labeled $1$, $2$ balls labeled $2$, and $1$ ball labeled $3$.)}.
Notably, how does the distribution impact the asymptotic shape of the \lektree? How do the numbers of cliques of given size change? 

\bibliography{biblio.bib}

\end{document}